\documentclass[12pt,reqno,UTF8]{amsart}
\usepackage{amsfonts,verbatim, amscd,mathrsfs, color, array}
\usepackage{amsmath,latexsym,amssymb,amsbsy, amsthm}
\usepackage{graphicx}

\newcommand{\beq}{\begin{equation}}
	\newcommand{\eeq}{\end{equation}}
\newcommand{\ben}{\begin{eqnarray}}
	\newcommand{\een}{\end{eqnarray}}
\newcommand{\beno}{\begin{eqnarray*}}
	\newcommand{\eeno}{\end{eqnarray*}}

\setlength{\parindent}{2em}
\usepackage[hmargin=3.0cm, vmargin=4.0cm]{geometry}
\usepackage{bbm}
\usepackage{pdfsync}
\usepackage{epstopdf}
\usepackage{cite}
\usepackage[english]{babel}
\usepackage{multirow}
\usepackage[font=bf,aboveskip=15pt]{caption}
\usepackage[toc,page]{appendix}
\usepackage[colorlinks,
linkcolor=red,
anchorcolor=blue,
citecolor=blue]{hyperref}
\usepackage{enumerate}
\raggedbottom
\usepackage{bookmark}
\DeclareFontShape{OT1}{cmr}{bx}{sc}{<-> cmbcsc10}{}
\setcounter{tocdepth}{6}
\setcounter{secnumdepth}{6}

\usepackage{float}
\usepackage{ulem}
\usepackage{syntonly}
\usepackage{mathtools}
\usepackage{bm}
\usepackage{bbding}
\usepackage{wasysym}
\usepackage{diagbox}

\usepackage{upgreek}

\newtheorem{lem}{Lemma}[section]
\newtheorem{prop}{Proposition}[section]
\newtheorem{thm}{Theorem}[section]

\newtheorem{rmk}{Remark}[section]
\newcommand{\bremark}{\begin{remark} \em}
	\newcommand{\eremark}{\end{remark} }

\allowdisplaybreaks[4]

\numberwithin{equation}{section}
\linespread{1.2}

\DeclareMathOperator{\supp}{supp}
\hypersetup{
	colorlinks=true,
	linkcolor=black, 
	citecolor=black, 
	urlcolor=black 
}
\usepackage{enumitem}
\usepackage{amsthm}

\begin{document}
	
	\title[]{Normalized solutions to critical Choquard systems with linear and nonlinear couplings}

\author[]{Wenliang Pei and Chonghao Deng}
\address{\noindent
	State Key Laboratory of Mathematical Sciences, Academy of Mathematics and Systems Science, Chinese Academy of Sciences, Beijing 100190, China;
	School of Mathematical Sciences, University of Chinese Academy of Sciences, Beijing 100049, China
}
\email{peiwenliang@amss.ac.cn}

\address{\noindent
	School of Statistics, University of International Business and Economics, Beijing 100029, China
}
\email{dengchonghao@amss.ac.cn}


	\begin{abstract}
		We consider the critical Choquard system with both linear and nonlinear couplings
		\begin{equation*}
			\left\{
			\begin{aligned}
				-\Delta v_1 + \mu_1 v_1 &= ( I_\omega * |v_1|^{2_\omega^*} ) |v_1|^{2_\omega^* -2} v_1 + \theta p( I_\omega * |v_2|^q)|v_1|^{p-2} v_1 + \varepsilon v_2, \quad in \,\, \mathbb{R}^N, \\
				-\Delta v_2 + \mu_2 v_2 &= ( I_\omega * |v_2|^{2_\omega^*} ) |v_2|^{2_\omega^* -2} v_2 + \theta q( I_\omega * |v_1|^p)|v_2|^{q-2} v_2 + \varepsilon v_1 , \quad in \,\, \mathbb{R}^N ,\\
				\int_{\mathbb{R}^N} v_1^2 = \alpha_1^2\, &,  \int_{\mathbb{R}^N} v_2^2 = \alpha_2^2,
			\end{aligned}
			\right.
		\end{equation*}
		where $N=3\,\, \text{or} \,\, 4$, $\alpha_1,\alpha_2 > 0 $, $\theta > 0 $, $2_{\omega,*} :=\frac{N+\omega}{N} <p,q<2_\omega^*:=\frac{N+\omega}{N-2}$, $\varepsilon>0$, $0<\omega<N$, $I_\omega: \mathbb{R}^N \to  \mathbb{R}$ represents the Riesz potential. For the $L^2$-subcritical case $p+q<\frac{2N+2\omega+4}{N}$, we utilize the Ekeland's variational principle to obtain the existence of a positive normalized ground state for the system as $0<\theta<\theta_0,\;0<\varepsilon<\varepsilon_*$. For the $L^2$-supercritical case $p+q>\frac{2N+2\omega+4}{N}$, we apply variational methods to establish the existence of a positive normalized ground state for the system as $\theta>\theta_*,\;0<\varepsilon<\overline{\varepsilon}$.
		\\
		\noindent \footnotesize{  \textbf{Keywords: }Normalized solutions;  Choquard systems; linear couplings.} \\
		\footnotesize{ \textbf{AMS Subject Classification (2020): } 35J15; 35J60; 35Q55}
	\end{abstract}
	\maketitle

	\section{Introduction}
	
 \par In this paper, we consider the critical Choquard system with both linear and nonlinear couplings
\begin{equation}
	\left\{
	\begin{aligned}
		-\Delta v_1 + \mu_1 v_1 &= ( I_\omega * |v_1|^{2_\omega^*} ) |v_1|^{2_\omega^* -2} v_1 + \theta p( I_\omega * |v_2|^q)|v_1|^{p-2} v_1 + \varepsilon v_2, \quad \text{in} \,\, \mathbb{R}^N, \\
		-\Delta v_2 + \mu_2 v_2 &= ( I_\omega * |v_2|^{2_\omega^*} ) |v_2|^{2_\omega^* -2} v_2 + \theta q( I_\omega * |v_1|^p)|v_2|^{q-2} v_2 + \varepsilon v_1 , \quad \text{in} \,\, \mathbb{R}^N ,\\
		\int_{\mathbb{R}^N} v_1^2 = \alpha_1^2\, &,  \int_{\mathbb{R}^N} v_2^2 = \alpha_2^2,
	\end{aligned}
	\right.
	\label{system1.1}
\end{equation}
where $N=3\,\, \text{or} \,\, 4$, $\alpha_1,\alpha_2 > 0 $, $\theta > 0 $, $2_{\omega,*} :=\frac{N+\omega}{N} <p,q<2_\omega^*:=\frac{N+\omega}{N-2}$, $\varepsilon>0$, $0<\omega<N$, the Riesz potential $I_\omega: \mathbb{R}^N \to \mathbb{R}$ is given by
\begin{equation*}
	I_\omega(x)=\frac{\Gamma(\frac{N-\omega}{2})}{2^\omega \pi^{\frac{N}{2}}\Gamma(\frac{\omega}{2})}\cdot \frac{1}{|x|^{N-\omega}}, \qquad \forall x \in \mathbb{R}^N\setminus \{0\}.
\end{equation*} 

	\par The system \eqref{system1.1} arises from the Schr\"{o}dinger system with both linear and nonlinear couplings as follows
\begin{equation}
	\left\{
	\begin{aligned}
		-i\partial_t \Phi_1-\varepsilon \Phi_2&=\Delta \Phi_1+(H(x)*|\Phi_1|^a)|\Phi_1|^{a-2}\Phi_1+\theta p(H(x)*|\Phi_2|^q)|\Phi_1|^{p-2}\Phi_1,\\
		-i\partial_t \Phi_2-\varepsilon \Phi_1&=\Delta \Phi_2+(H(x)*|\Phi_2|^b)|\Phi_2|^{b-2}\Phi_2+\theta q(H(x)*|\Phi_1|^p)|\Phi_2|^{q-2}\Phi_2,
	\end{aligned}
	\right.
	\label{system1.2}
\end{equation}
	which originates from Bose-Einstein condensations\cite{Anderson1995,Deconinck2004,Timmermans1998,zhang2023normalizedsolutionscriticalchoquard}. Physically, $\varepsilon$ signifies the radio-frequency coupling strength. The solution $\Phi_1,\Phi_2:\mathbb{R}^+ \times \mathbb{R}^N \to \mathbb{C}$  denote the condensate wave functions. The non-negative response function $H$ carries the information regarding the self-interaction of particles. For the coupling constant $\theta$, the condition $\theta >0$ implies the attractive interaction, while  $\theta<0$ indicates the repulsive interaction. The system is characterized by mass conservation. Namely, 
	\[\int_{\mathbb{R}^N}|\Phi_1(t,x)|^2dx,~~\int_{\mathbb{R}^N}|\Phi_2(t,x)|^2dx\]
	are not related to $t\in \mathbb{R}^+$. Indeed, $|\Phi_i(t,\cdot)|_2$ denotes the quantity of particles, $i=1,2$. The system is also applied in nonlinear optics\cite{Menyuk1987}.
	
	The standing wave solutions of this system are the focus of related researches, that is $\Phi_j(t,x)=e^{i\mu_j t}v_j(x)$,  $j=1,2$, as $\mu_1=\mu_2$. Meanwhile, let $H(x)=\delta(x)$, with $\delta$ being the Dirac delta function defined as a distribution satisfying
	\[
	\int_{\mathbb{R}^N} \delta(x)v(x)\,dx = v(0), \qquad \forall ~v \in C_c^\infty(\mathbb{R}^N).
	\]
	The system \eqref{system1.2} is reduced to
	\begin{equation}
		\left\{
		\begin{aligned}
			-\Delta v_1 + \mu_1 v_1 &=  |v_1|^{2a -2} v_1 + \theta p |v_2|^q|v_1|^{p-2} v_1 + \varepsilon v_2, \qquad in \,\, \mathbb{R}^N, \\
			-\Delta v_2 + \mu_2 v_2 &=|v_2|^{2b -2} v_2 + \theta q |v_1|^p|v_2|^{q-2} v_2 + \varepsilon v_1 , \qquad in \,\, \mathbb{R}^N.
		\end{aligned}
		\right.
		\label{system1.3}
	\end{equation}
	\par Two methods are available for handling system \eqref{system1.3}: one is to make the frequencies $\mu_1$ and $\mu_2$ fixed, the other is to fix the $L^2$-norms $|v_1|_2$ and $|v_2|_2$. 
	 The first method is referred to \cite{lin2005ground,Ambrosetti2007,Bartsch2010,Li2016,Luo2019}. In this paper, the second approach will be employed to obtain the existence results for system \eqref{system1.1}. Since the publication of Jeanjean's pioneering article \cite{Jeanjean1997}, numerous scholars have been engaged in researching normalized solutions during the past twenty years \cite{soave2020normalized,bartsch2023existence,Bartsch2019,Bartsch2016,Bartsch2018}. Notably, the Sobolev subcritical case has been the focus of substantial academic interest and Soave carried out fundamental work for the NLS equation in \cite{Soave2020}. After that, Bartsch et al. generalized the result to the Sobolev critical Schr{\"o}dinger system \cite{bartsch2023existence}. They proved the existence and non-existence of normalized ground state for system \eqref{system1.3} with $N=3\,\, \text{or} \,\, 4$,  $p,q>1 $, $p+q\in(2,2^*)$, $a=b=\frac{2^*}{2}=\frac{N}{N-2}$, $\varepsilon=0$.
	
	For the case that linear coupling occurs in the Schr{\"o}dinger system, several new challenges have been identified in $L^2$-mass fixed problem. To our current understanding, Yun and Zhang\cite{Yun2024} first investigated $\varepsilon=\varepsilon(x) \in L^\infty(\mathbb{R}^N)$ for the doubly coupled nonlinear Schr{\"o}dinger system. Using variational techniques, they demonstrated both the existence and symmetry of the solutions. Regarding the situation of the pure linear couplings for Schr{\"o}dinger systems, see \cite{chen2021normalized} for the mass subcritical case, \cite{chen2021normalizedsolutionsnonlinearschrodinger} for the mass supercritical case and \cite{Chen2023} for the combined case. In \cite{deng2025normalized}, Deng and Zhang investigated the system \eqref{system1.3} with $N=3\,\, \text{or} \,\, 4$, $a=b=\frac{2^*}{2}=\frac{N}{N-2}$, $p,q>1$, $p+q \in (2,2^*] \setminus \{2+\frac{4}{N}\}$.
	They demonstrated when $\theta>0$ and $p+q\in (2,2+\frac{4}{N})$, the system \eqref{system1.3} possesses a  normalized solution as $0<\varepsilon<\varepsilon_0$ and the prescribed $L^2$-mass small enough. For $\theta>0$ and $p+q \in (2+\frac{4}{N},2^*)$, the system \eqref{system1.3} possesses a normalized solution as $0<\theta_*<\theta<\theta_{**}$ and $0<\varepsilon<\varepsilon_1$.
	
	Now let $H(x)=I_\omega(x)$, $\theta=0$ and $\varepsilon=0$, the system \eqref{system1.2} is reduced to the Choquard equation as follows
	\begin{equation}
		-\Delta v + \mu v = ( I_\omega * |v|^r ) |v|^{r-2}v .
		\label{equation1.4}
	\end{equation}
	
	\par In \cite{MOROZ2013153}, Moroz and Schaftingen established that equation \eqref{equation1.4} admits a nontrivial solution exactly when $\frac{N+\omega}{N}<r<\frac{N+\omega}{N-2}$. When considering single Choquard equations, extensive results have been established(see \cite{battaglia2018groundstates,MOROZ2013153,moroz2015existence,Van2018} for details). However, research on Choquard systems has received comparatively limited attention. The reader may consult \cite{zhang2023normalizedsolutionscriticalchoquard,Li2025}. In \cite{zhang2023normalizedsolutionscriticalchoquard}, Zhang et al. considered the system \eqref{system1.1} with $\varepsilon=0$. They established the existence results for the critical Choquard system with pure nonlinear couplings. Significantly, Moroz proposed a nonlocal Brezis-Kato type regularity estimate in \cite{moroz2015existence}, which is crucial for establishing compactness.
	
	\par Our study is devoted to investigating existence results for system \eqref{system1.1}. In this article, suppose that $N=3\,\, \text{or} \,\, 4$, $\alpha_1,\alpha_2 > 0$, $\theta > 0$ and $2_{\omega,*} :=\frac{N+\omega}{N} <p,q<2_\omega^*:=\frac{N+\omega}{N-2}$.

\par Define $H:=H^1_r(\mathbb{R}^N) \times H^1_r(\mathbb{R}^N)$, where $H^1_r(\mathbb{R}^N)=\{v \in H^1(\mathbb{R}^N):v(x)=v(|x|)\}$,  the energy function $\mathcal{J}_\theta : H \to \mathbb{R}$ is defined as follows
\begin{equation*}
	\begin{split}
	\mathcal{J}_\theta(v_1,v_2)=&\frac{1}{2}(|\nabla v_1|_2^2+|\nabla v_2|_2^2)-\frac{1}{22_\omega^*}\int_{\mathbb{R}^N}\left((I_\omega*|v_1|^{2_\omega^*})|v_1|^{2_\omega^*}+(I_\omega*|v_2|^{2_\omega^*})|v_2|^{2_\omega^*}\right)\\
	&-\theta \int_{\mathbb{R}^N}(I_\omega*|v_1|^p)|v_2|^q-\varepsilon \int_{\mathbb{R}^N}v_1v_2.
    \end{split}
\end{equation*}
Denote the $L^2-trous$
\[ \mathcal{S}(\alpha_1,\alpha_2):= \{(v_1,v_2)\in H:\int_{\mathbb{R}^N} v_1^2 = \alpha_1^2 ,~ \int_{\mathbb{R}^N} v_2^2 = \alpha_2^2 \}. \]
Then the critical points of $\mathcal{J}_\theta(v_1,v_2)$ restricted to  $\mathcal{S}(\alpha_1,\alpha_2)$ coincide with the weak solutions of system $\eqref{system1.1} $, where $\mu_1, \mu_2 \in \mathbb{R}$ are the corresponding Lagrangian multipliers. 

\par Next we introduce the Pohozaev identity 
\begin{equation}
	\begin{split}
		P_\theta(v_1,v_2)=&|\nabla v_1|_2^2+|\nabla v_2|_2^2-\int_{\mathbb{R}^N}\left((I_\omega*|v_1|^{2_\omega^*})|v_1|^{2_\omega^*}+(I_\omega*|v_2|^{2_\omega^*})|v_2|^{2_\omega^*}\right)\\
	&-\theta(\gamma_p+\gamma_q)\int_{\mathbb{R}^N}(I_\omega*|v_1|^p)|v_2|^q,
	\end{split}
\end{equation}
where $\gamma_p:=\frac{Np-N-\omega}{2}$. Let $\mathcal{P}_\theta(\alpha_1,\alpha_2):=\{(v_1,v_2)\in H: P_\theta(v_1,v_2)=0, (v_1,v_2)\in \mathcal{S}(\alpha_1,\alpha_2)\}$, we consider the following minimizing problem
\[ m_\theta(\alpha_1,\alpha_2):= \inf_{ \mathcal{P}_\theta(\alpha_1,\alpha_2)}\mathcal{J}_\theta.
\]
A solution $(v_1, v_2)$ to system $\eqref{system1.1}$ is referred to as a normalized ground state provided that it minimizes $m_\theta(\alpha_1, \alpha_2)$.

\par The principal results established in this work are stated in the next two theorems. For the $L^2$-subcritical case $p+q<\frac{2N+2\omega+4}{N}$, we present the first theorem.
\begin{thm}
	Let $N=3\,\, \text{or} \,\, 4$, $2_{\omega,*} =\frac{N+\omega}{N} <p,q<2_\omega^*=\frac{N+\omega}{N-2}$ and  $p+q<\frac{2N+2\omega+4}{N} $, there exists $\theta_0>0$, $\varepsilon_*=\varepsilon_*(\theta_0)>0$ such that system $\eqref{system1.1} $ possesses a normalized ground state $(v_1,v_2)$ for every $0<\theta<\theta_0,\;0<\varepsilon<\varepsilon_*$. Furthermore,
	\begin{enumerate}[label=(\roman*)]
		\item $v_i \in H_r^1(\mathbb{R}^N) $ is positive and the corresponding Lagrangian multiplier $\mu_i$ is positive, $i=1,2$.
		\item  $(v_1,v_2)$ is a local minimizer of $\mathcal{J}_\theta$ restricted to $\mathcal{S}(\alpha_1,\alpha_2)$.
	\end{enumerate}
	\label{Theorem1.1}
\end{thm}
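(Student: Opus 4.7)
The plan is to realize the ground state as a local minimizer of $\mathcal{J}_\theta$ on a closed gradient-ball inside $\mathcal{S}(\alpha_1,\alpha_2)$ and then to identify this local minimum with $m_\theta(\alpha_1,\alpha_2)$. Combining the Hardy-Littlewood-Sobolev inequality with the Gagliardo-Nirenberg interpolation adapted to the Choquard nonlinearity, on $\mathcal{S}(\alpha_1,\alpha_2)$ one obtains
\[
\mathcal{J}_\theta(v_1,v_2)\ \ge\ \tfrac12 A^2 - C_1 A^{2\cdot 2_\omega^*} - \theta\, C_2(\alpha_1,\alpha_2)\, A^{\gamma_p+\gamma_q} - \varepsilon\alpha_1\alpha_2,
\]
with $A^2=|\nabla v_1|_2^2+|\nabla v_2|_2^2$. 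Because $p+q<\frac{2N+2\omega+4}{N}$ forces $0<\gamma_p+\gamma_q<2<2\cdot 2_\omega^*$, the right-hand side, viewed as a function of $A$, has a strict positive local maximum whenever $\theta$ is small; selecting $\theta_0$ and a radius $\rho>0$ so that this maximum exceeds a positive constant uniformly in $\theta\in(0,\theta_0)$, and then $\varepsilon_*>0$ so small that $-\varepsilon\alpha_1\alpha_2$ is negligible compared to this barrier, I obtain $V_\rho:=\{(v_1,v_2)\in\mathcal{S}(\alpha_1,\alpha_2): A\le\rho\}$ with
\[
c:=\inf_{V_\rho}\mathcal{J}_\theta<0<\inf_{\partial V_\rho}\mathcal{J}_\theta\quad\text{for all } 0<\theta<\theta_0,\ 0<\varepsilon<\varepsilon_*.
\]

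Next I apply Ekeland's variational principle on the closed set $V_\rho\subset \mathcal{S}(\alpha_1,\alpha_2)$ inside the radial space $H=H^1_r(\mathbb{R}^N)\times H^1_r(\mathbb{R}^N)$, producing a constrained Palais-Smale sequence $(v_1^n,v_2^n)$ at level $c$ together with bounded Lagrange multipliers $\mu_i^n$ coming from the $L^2$-constraints. Boundedness in $H$ is automatic inside $V_\rho$, so up to a subsequence $(v_1^n,v_2^n)\rightharpoonup(v_1,v_2)\in V_\rho$. The Strauss-type compact embedding $H^1_r\hookrightarrow L^s(\mathbb{R}^N)$ for $s\in(2,2^*)$, combined with HLS, handles the convergence of the subcritical cross Choquard term and of the linear-coupling integral $\int v_1v_2$.

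The main obstacle is the Hardy-Littlewood-Sobolev critical Choquard term, which obstructs compactness. Writing $v_i^n=v_i+w_i^n$ with $w_i^n\rightharpoonup 0$, a nonlocal Brezis-Lieb splitting in the spirit of Moroz-Van~Schaftingen decouples the energy as $\mathcal{J}_\theta(v_1,v_2)+J_\infty(w_1^n,w_2^n)=c+o(1)$, where $J_\infty$ retains only the kinetic and critical Choquard pieces. The HLS-critical Sobolev inequality then forces a dichotomy for the tails: either $w_i^n\to 0$ strongly in $H$, or $J_\infty(w_1^n,w_2^n)$ is bounded below by a universal Aubin-Talenti-Choquard threshold of positive energy. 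Since $c<0$, only strong convergence survives, so $(v_1^n,v_2^n)\to(v_1,v_2)$ in $H$ and the Euler-Lagrange system passes to the limit with multipliers $\mu_i$. The limit automatically satisfies $P_\theta(v_1,v_2)=0$, hence lies in $\mathcal{P}_\theta(\alpha_1,\alpha_2)$; a standard scaling/fibering argument then identifies its energy with $m_\theta(\alpha_1,\alpha_2)$. Replacing $(v_1,v_2)$ by $(|v_1|,|v_2|)$ decreases $\mathcal{J}_\theta$ (the linear coupling satisfies $-\varepsilon\int v_1v_2\ge-\varepsilon\int|v_1||v_2|$) and stays in $V_\rho$, so we may assume $v_i\ge 0$, and the strong maximum principle applied to the coupled system upgrades this to $v_i>0$. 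Testing the Euler-Lagrange equations against $(v_1,v_2)$ in combination with $P_\theta(v_1,v_2)=0$ and the $L^2$-subcriticality yields $\mu_1,\mu_2>0$, after possibly shrinking $\varepsilon_*$ once more, while assertion (ii) is immediate from the construction on $V_\rho$. The quantitative compatibility of $\theta_0$ and $\varepsilon_*$, needed to guarantee that $c<0$ remains strictly below the critical concentration threshold uniformly, is the point requiring the most care.
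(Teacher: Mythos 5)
Your overall strategy coincides with the paper's: a lower bound for $\mathcal{J}_\theta$ in terms of $A=(|\nabla v_1|_2^2+|\nabla v_2|_2^2)^{1/2}$ producing a local minimization region, Ekeland's principle to get a constrained Palais--Smale sequence, and a Brezis--Lieb splitting of the critical Choquard term to restore compactness. However, there are two genuine gaps in the compactness step, and they are precisely where the difficulty of this problem lives.

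First, the dichotomy argument does not close as you state it. From the splitting $\mathcal{J}_\theta(v_1,v_2)+J_\infty(w_1^n,w_2^n)=c+o(1)$ you cannot conclude from ``$c<0$'' alone, because a priori $\mathcal{J}_\theta(v_1,v_2)$ (the energy of the weak limit, which may have smaller masses $|v_i|_2\le\alpha_i$) could itself lie below $c$, absorbing the positive threshold carried by the tails. The paper closes this by first proving the monotonicity $m_\theta(\alpha_1,\alpha_2)\le m_\theta(\alpha_1',\alpha_2')$ for $\alpha_i'\le\alpha_i$ (Lemma \ref{Lemma3.6单调性}(ii), via a cutoff-and-glue construction), using $P_\theta(v_1,v_2)=0$ to get $\mathcal{J}_\theta(v_1,v_2)\ge m_\theta(|v_1|_2,|v_2|_2)\ge m_\theta(\alpha_1,\alpha_2)$, and then invoking the explicit smallness $\varepsilon\alpha_1\alpha_2<\frac{2_\omega^*-1}{2\cdot 2_\omega^*}S_H^{2_\omega^*/(2_\omega^*-1)}$ built into $\varepsilon_*$; your $J_\infty$ also silently drops the linear-coupling term $-\varepsilon\int w_1^n w_2^n$ of the tails, which is exactly why that quantitative restriction on $\varepsilon$ is needed. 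You flag ``the quantitative compatibility of $\theta_0$ and $\varepsilon_*$'' as the delicate point but supply no mechanism for it.

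Second, and more seriously, even after the gradients of the tails vanish you have not recovered the constraints: $|\nabla(v_i^n-v_i)|_2\to0$ does not imply $|v_i^n-v_i|_2\to0$, since $L^2$ mass can escape to infinity even for radial sequences. This is where the linear coupling creates the main new obstruction. The paper handles it by first proving $\mu_1,\mu_2>0$ (via the nonlocal Brezis--Kato regularity Lemmas \ref{Lemma3.2提升可积性}--\ref{smooth} and the Liouville Lemma \ref{Liouville} -- your alternative of ``testing against $(v_1,v_2)$'' only yields positivity of a combination $\mu_1\alpha_1^2+\mu_2\alpha_2^2$, not of each multiplier), then deriving the identity $\mu_1(\alpha_1^2-\overline{\alpha}_1^2)+\mu_2(\alpha_2^2-\overline{\alpha}_2^2)=2\varepsilon\lim_n\int(v_{1,n}-v_1)(v_{2,n}-v_2)$, and finally excluding mass loss by showing that otherwise $\varepsilon\ge\sqrt{\mu_1\mu_2}$, which forces $(\sqrt{\mu_2}v_1+\sqrt{\mu_1}v_2)$ to be a nonnegative superharmonic function in $L^{N/(N-2)}$ and hence identically zero. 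None of this appears in your outline, so the claimed strong convergence in $H$ (and with it the membership of the limit in $\mathcal{S}(\alpha_1,\alpha_2)$) is unproven.
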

The compactness argument plays a pivotal role in establishing Theorem \ref{Theorem1.1}. It's essential to overcome the difficulties arising from both the nonlocal terms and the linear couplings. For the nonlocal terms, we need to improve the integrability and the regularity conditions. For the linear couplings, we require the Brezis-Lieb type identity and the Liouville type lemma. The detailed proof is provided in Lemma \ref{Lemma3.7紧性}.

\par Then for the $L^2$-supercritical case $p+q>\frac{2N+2\omega+4}{N}$, we now present the second theorem.
\begin{thm}
	Let $N=3\,\, \text{or} \,\, 4$, $2_{\omega,*} =\frac{N+\omega}{N} <p,q<2_\omega^*=\frac{N+\omega}{N-2} \; and \; p+q>\frac{2N+2\omega+4}{N} $, 
	there exists $\theta_*>0,\; \overline{\varepsilon}>0$ such that system $\eqref{system1.1} $ possesses a normalized ground state $(v_1,v_2)$ for every $\theta>\theta_*,\;0<\varepsilon<\overline{\varepsilon}$. Furthermore,
	\begin{enumerate}[label=(\roman*)]
		\item $v_i \in H_r^1(\mathbb{R}^N) $ is positive and the corresponding Lagrangian multiplier $\mu_i$ is positive, $i=1,2$.
		\item  $(v_1,v_2)$ is  a minimax type critical point of $\mathcal{J}_\theta$ restricted to $\mathcal{S}(\alpha_1,\alpha_2)$.
	\end{enumerate}
	\label{theorem1.2}
\end{thm}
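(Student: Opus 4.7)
The plan is to realize $m_\theta(\alpha_1,\alpha_2)$ as a mountain-pass level of $\mathcal{J}_\theta$ restricted to $\mathcal{S}(\alpha_1,\alpha_2)$ via the Pohozaev manifold $\mathcal{P}_\theta(\alpha_1,\alpha_2)$, and to push this level below the critical Hardy-Littlewood-Sobolev threshold by choosing $\theta$ large and $\varepsilon$ small. For $s>0$ introduce the mass-preserving dilation $s\star(v_1,v_2):=(s^{N/2}v_1(s\,\cdot),\,s^{N/2}v_2(s\,\cdot))$, under which a direct computation gives
\begin{equation*}
	\mathcal{J}_\theta(s\star v)=\frac{s^2}{2}\bigl(|\nabla v_1|_2^2+|\nabla v_2|_2^2\bigr)-\frac{s^{2\cdot 2_\omega^*}}{2\cdot 2_\omega^*}A(v)-\theta s^{\gamma_p+\gamma_q}B(v)-\varepsilon\int_{\mathbb{R}^N}v_1v_2,
\end{equation*}
where $A,B$ denote the two nonlocal integrals. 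Since $2<\gamma_p+\gamma_q<2\cdot 2_\omega^*$ in the mass-supercritical regime, the fiber map $\phi_v(s):=\mathcal{J}_\theta(s\star v)$ admits a unique strict maximum $s^*(v)$, with $s^*(v)\star v\in\mathcal{P}_\theta(\alpha_1,\alpha_2)$; substituting $P_\theta=0$ into $\mathcal{J}_\theta$ gives a coercive positive expression in the gradient norms (modulo an $O(\varepsilon)$ remainder), so $\mathcal{J}_\theta|_{\mathcal{P}_\theta(\alpha_1,\alpha_2)}$ is bounded below by a positive constant provided $\varepsilon<\overline{\varepsilon}_1$, and in particular $m_\theta(\alpha_1,\alpha_2)>0$.

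Next I would define the minimax value $\gamma_\theta:=\inf_{\gamma\in\Gamma}\max_{t\in[0,1]}\mathcal{J}_\theta(\gamma(t))$ with admissible paths $\gamma\in C([0,1],\mathcal{S}(\alpha_1,\alpha_2))$ of the form $\gamma(t)=((1-t)s_0+ts_1)\star v$ with $s_0$ small and $s_1$ large; the fiber analysis yields $\gamma_\theta=m_\theta(\alpha_1,\alpha_2)$. Jeanjean's trick applied to the auxiliary functional $\widetilde{\mathcal{J}}_\theta(s,v):=\mathcal{J}_\theta(s\star v)$ on $\mathbb{R}\times\mathcal{S}(\alpha_1,\alpha_2)$ produces a Palais-Smale sequence $(v_n^1,v_n^2)\subset H$ at level $\gamma_\theta$ with the additional quasi-Pohozaev property $P_\theta(v_n^1,v_n^2)\to 0$, which combined with the coercive representation on (approximate) $\mathcal{P}_\theta$ gives $H^1$-boundedness; working in the radial space $H$ already provides compactness for all subcritical quantities.

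The decisive step is the level estimate $\gamma_\theta<c^*$, where $c^*=\frac{\omega+2}{2(N+\omega)}S_{HL}^{(N+\omega)/(\omega+2)}$ is the compactness threshold for the critical Choquard embedding (with $S_{HL}$ the Moroz-Van Schaftingen best constant). I would take as test pair a common $L^2$-renormalization of the extremal family $U_\lambda$ of $S_{HL}$ matched to the masses $\alpha_1,\alpha_2$, and evaluate $\max_{s>0}\mathcal{J}_\theta(s\star(U_\lambda,U_\lambda))$: the critical Choquard part yields $c^*+o(1)$ as $\lambda\to\infty$, the linear coupling contributes an $O(\varepsilon)$ perturbation, and the nonlinear coupling, after optimization in $s$, produces a strictly negative correction whose size grows with $\theta$ (here $\gamma_p+\gamma_q>2$ is crucial, placing the coupling exponent strictly between the quadratic and the critical ones so that the $s$-optimal correction is monomial in $\theta$). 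Choosing first $\theta>\theta_*$ large and then $\varepsilon<\overline{\varepsilon}(\theta_*)$ small gives $\gamma_\theta<c^*$.

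Finally, let $(v_n^1,v_n^2)\rightharpoonup(v_1,v_2)$ in $H$. Combining the strict level bound with Moroz's nonlocal Brezis-Kato regularity from \cite{moroz2015existence} and a concentration-compactness dichotomy rules out escape of mass to a critical bubble, so convergence is strong and the limit lies in $\mathcal{P}_\theta(\alpha_1,\alpha_2)\cap\mathcal{S}(\alpha_1,\alpha_2)$ attaining $m_\theta$. Non-triviality of \emph{both} components is forced by the linear coupling: if $v_2\equiv 0$, the equation for $v_1$ decouples, and the Liouville-type argument exploited in Lemma \ref{Lemma3.7紧性} forces $v_1\equiv 0$, contradicting $m_\theta>0$. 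Positivity is obtained by replacing $(v_1,v_2)$ with $(|v_1|,|v_2|)$, which lowers the energy because the HLS integrals respect absolute values and $-\varepsilon\int|v_1||v_2|\le-\varepsilon\int v_1v_2$, and then invoking the strong maximum principle; the Lagrange multipliers $\mu_i>0$ follow by testing the equations against $v_i$ and combining with $P_\theta(v_1,v_2)=0$. The principal obstacle is the level estimate: the nonlocality of the critical term prevents a direct Aubin-Talenti computation, and the $\varepsilon v_1v_2$ term does not scale in step with the others, so isolating the $\theta$-driven gain below $c^*$ while absorbing the $\varepsilon$-perturbation requires a careful two-parameter $(\lambda,s)$ asymptotic analysis of the cross-coupling term on the extremal family of $S_{HL}$.
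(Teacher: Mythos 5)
Your overall architecture (Pohozaev manifold, unique fiber maximum since $2<\gamma_p+\gamma_q<2\cdot 2_\omega^*$, mountain-pass characterization of $m_\theta$, a Palais--Smale sequence with the Pohozaev property, compactness below the critical threshold $\tfrac{2_\omega^*-1}{2\cdot 2_\omega^*}S_H^{2_\omega^*/(2_\omega^*-1)}-\varepsilon\alpha_1\alpha_2$) matches the paper's, and your threshold constant is the correct one. But there are two genuine gaps.

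First, the level estimate. You propose a Brezis--Nirenberg type computation on the extremal family of $S_H$, which you yourself identify as the principal obstacle; for $N=3,4$ the extremals are not in $L^2$, so the ``common $L^2$-renormalization'' requires truncation and a delicate two-parameter expansion that you do not carry out, and whose outcome (a $\theta$-monomial negative correction surviving the truncation errors and the $\varepsilon$-term) is asserted rather than proved. The paper avoids this entirely: since the theorem only claims existence for $\theta>\theta_*$, it suffices to show $m_\theta(\alpha_1,\alpha_2)<\eta$ for every $\eta>0$ once $\theta$ is large, and this follows from testing with a \emph{fixed} bump $\varphi$ with $|\varphi|_2\le\min\{\alpha_1,\alpha_2\}$, the mass-monotonicity $m_\theta(\alpha_1,\alpha_2)\le m_\theta(|\varphi|_2,|\varphi|_2)$, and the observation that $\max_{t\ge\bar t}\bigl(2L(t\diamond\varphi)-\theta t^{\gamma_p+\gamma_q}\int(I_\omega*|\varphi|^p)|\varphi|^q\bigr)<\eta$ for $\theta$ large, while $L(t\diamond\varphi)<\eta$ for $t<\bar t$. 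Your bubble computation, even if completed, buys nothing here.

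Second, the compactness step. Your claim that $m_\theta(\alpha_1,\alpha_2)>0$ uniformly for $\theta>\theta_*$ and $\varepsilon<\overline\varepsilon_1$ is not tenable: the paper only establishes $m_\theta>-\varepsilon\alpha_1\alpha_2$, and indeed $m_\theta\to$ below any positive $\eta$ as $\theta\to\infty$ while $\inf_{\mathcal{P}_\theta}(|\nabla v_1|_2^2+|\nabla v_2|_2^2)$ shrinks like $\theta^{-2/(\gamma_p+\gamma_q-2)}$, so no $\theta$-independent $\overline\varepsilon$ makes the level positive on the whole range $\theta>\theta_*$. More importantly, your phrase ``concentration-compactness rules out escape of mass'' glosses over the actual difficulty created by the linear coupling: even in $H^1_r$ the weak limit may satisfy $|v_i|_2=\overline\alpha_i<\alpha_i$, and the limit equation then holds with the \emph{same} multipliers but smaller masses. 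The paper resolves this by testing the equations to derive $\mu_1(\alpha_1^2-\overline\alpha_1^2)+\mu_2(\alpha_2^2-\overline\alpha_2^2)=2\varepsilon\lim_n\int(v_{1,n}-v_1)(v_{2,n}-v_2)$, deducing $\varepsilon\ge\sqrt{\mu_1\mu_2}$ in the bad case, and then applying the Liouville lemma to $(\sqrt{\mu_2}v_1,\sqrt{\mu_1}v_2)$ to reach a contradiction. This mass-recovery argument, together with $\mu_1,\mu_2>0$ obtained via the Liouville lemma (not merely by ``testing against $v_i$''), is the core of the proof and is absent from your proposal.
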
	
	\begin{rmk}
    In Theorem \ref{theorem1.2}, we remove the upper-bound restriction on $\theta$. This methodology remains applicable to \cite[Theorem 1.2]{deng2025normalized}.
    \end{rmk}
This article proceeds as detailed below. Section 2 is devoted to providing the essential technical preliminaries. In section 3, we first enhance the solution's regularity, then we establish the existence results for both the $L^2$-subcritical and supercritical regimes when $\theta>0 $ and $ \varepsilon>0$.

	\section{PRELIMINARIES}
	
	\par For completeness, the proposition below states the classical Hardy–Littlewood–Sobolev inequality.
\begin{prop}\cite[theorem 4.3]{lieb2001analysis}
	Let $p \in (1,\frac{N}{\omega})$ , $ u \in L^p(\mathbb{R}^N)$ , then it holds that $I_\omega*u \in L^{\frac{Np}{N-\omega p}}(\mathbb{R}^N)$ and 
	\begin{equation}
		\int_{\mathbb{R}^N}|I_\omega*u|^{\frac{Np}{N-\omega p}} \leq C \left(\int_{\mathbb{R}^N} |u|^p\right)^{\frac{N}{N-\omega p}} ,                      \label{HLS}
	\end{equation}
	here C only depends on N, $\omega$ and p.
\end{prop}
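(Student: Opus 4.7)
The statement is the classical Hardy--Littlewood--Sobolev inequality for the Riesz potential, and my plan is to reproduce the standard proof via a pointwise bound in terms of the Hardy--Littlewood maximal function, followed by the Hardy--Littlewood maximal theorem. Let $q=\frac{Np}{N-\omega p}$ be the target exponent; note the relations $\frac{1}{q}=\frac{1}{p}-\frac{\omega}{N}$, $q(1-\frac{\omega p}{N})=p$, and the assumption $p\in(1,\frac{N}{\omega})$ guarantees $q\in(1,\infty)$. The whole game is to establish, for every $x\in\mathbb{R}^N$, the estimate
\begin{equation*}
\bigl|(I_\omega*u)(x)\bigr|\;\leq\;C\,(Mu(x))^{1-\omega p/N}\,\|u\|_p^{\omega p/N},
\end{equation*}
where $Mu$ denotes the Hardy--Littlewood maximal function. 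Raising to the power $q$ gives $|I_\omega*u(x)|^q\leq C\,(Mu(x))^p\,\|u\|_p^{q-p}$, and then integrating and invoking the $L^p\to L^p$ boundedness of $M$ (valid for $p>1$, which is exactly the hypothesis) yields the claimed inequality.

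To prove the pointwise bound, I would cut the convolution at radius $R>0$ to be chosen below and estimate the two pieces separately. For the near part, write $B_R=B_R(0)$ and use the standard telescoping over dyadic annuli $A_k=B_{2^{-k}R}\setminus B_{2^{-k-1}R}$: since $|y|^{-(N-\omega)}\leq C(2^{-k-1}R)^{-(N-\omega)}$ on $A_k$, one obtains
\begin{equation*}
\int_{B_R}\frac{|u(x-y)|}{|y|^{N-\omega}}\,dy\;\leq\;C\sum_{k\geq 0}(2^{-k}R)^{\omega}\,\frac{1}{|B_{2^{-k}R}|}\int_{B_{2^{-k}R}}|u(x-y)|\,dy\;\leq\;C\,R^{\omega}\,Mu(x).
\end{equation*}
For the far part, an application of Hölder's inequality with conjugate exponent $p'$ gives
\begin{equation*}
\int_{|y|\geq R}\frac{|u(x-y)|}{|y|^{N-\omega}}\,dy\;\leq\;\|u\|_p\left(\int_{|y|\geq R}|y|^{-(N-\omega)p'}\,dy\right)^{1/p'}\;=\;C\,R^{\omega-N/p}\,\|u\|_p,
\end{equation*}
where the finiteness of the last integral uses precisely the constraint $p<N/\omega$ (equivalently $(N-\omega)p'>N$).

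Now optimize the sum of these two upper bounds in $R$. Setting $R^{\omega}Mu(x)=R^{\omega-N/p}\|u\|_p$ gives $R=(\|u\|_p/Mu(x))^{p/N}$, and substituting back produces exactly the asserted pointwise inequality (with $Mu(x)=0$ handled trivially and $Mu(x)=\infty$ giving no obstruction). The main technical point is really the choice of $R$ depending on $x$, which is why the Lebesgue exponent $q$ comes out with the sharp scaling. After that, the $L^p$-boundedness of $M$ finishes the proof, and the constant $C$ depends only on $N$, $\omega$, and $p$ through the dyadic sum, the Hölder step, and the norm of $M$ on $L^p$.

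The only delicate point is that this argument uses $p>1$ in an essential way (through $\|Mu\|_p\leq C\|u\|_p$); the borderline case $p=1$ is excluded from the statement and would require a weak-type substitute followed by Marcinkiewicz interpolation. An alternative route, which is the one actually used in Lieb--Loss \cite{lieb2001analysis}, is to apply Riesz rearrangement to reduce to radially symmetric decreasing $u$ and then compute directly; that approach has the advantage of giving the sharp constant, but since the statement only requires existence of some $C=C(N,\omega,p)$, the maximal-function proof sketched above is the quickest route.
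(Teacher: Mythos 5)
Your proof is correct. Note first that the paper itself offers no proof of this proposition: it is stated as a direct citation of Lieb--Loss, Theorem 4.3, so there is no in-paper argument to compare against line by line. Your maximal-function proof is the standard alternative route and all the steps check out: the exponent arithmetic $q(1-\omega p/N)=p$ and $q\omega p/N=q-p$ is right, the dyadic-annulus estimate of the near part gives $CR^{\omega}Mu(x)$ with the geometric series converging because $\omega>0$, the Hölder estimate of the far part uses exactly the hypothesis $p<N/\omega$ to make $(N-\omega)p'>N$, and the optimization in $R$ produces the pointwise bound with the correct scaling; the final appeal to $\|Mu\|_p\leq C\|u\|_p$ is legitimate precisely because $p>1$. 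Raising the resulting bound $\|I_\omega*u\|_q\leq C\|u\|_p$ to the power $q$ recovers the stated inequality, since $q/p=N/(N-\omega p)$. The comparison you draw at the end is also accurate: the proof in the cited reference proceeds via layer-cake representation and the Riesz rearrangement inequality (yielding sharp constants), whereas your argument is non-sharp but shorter and entirely sufficient for a constant depending only on $N$, $\omega$, $p$, which is all the paper uses.
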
	
Applying \eqref{HLS}, together with Hölder’s and Gagliardo-Nirenberg inequalities, for $2_{\omega,*}  <p<2_\omega^*$, we have
\[ \int_{\mathbb{R}^N}(I_\omega *|v|^p)|v|^p \leq C \left(\int_{\mathbb{R}^N} |v|^{\frac{2Np}{N+\omega}}\right)^{\frac{N+\omega}{N}} \leq C' |\nabla v|_2^{2 \gamma_p}|v|_2^{2p-2\gamma_p}.
\]
Then we consider the semi-group identity of the Riesz potential $ I_\omega=I_{\frac{\omega}{2}}*I_{\frac{\omega}{2}}$ \cite[theorem 5.9]{lieb2001analysis}. For the vector-valued case (see (3.3) in \cite{ghimenti2016nodal}), it follows that
\begin{align*}
    \int_{\mathbb{R}^N}(I_\omega *|v_1|^p)|v_2|^q & \leq C \left(\int_{\mathbb{R}^N}(I_\omega *|v_1|^p)|v_1|^p\right)^{\frac{1}{2}} \left(\int_{\mathbb{R}^N}(I_\omega *|v_2|^q)|v_2|^q \right)^{\frac{1}{2}} \\
    & \leq C |\nabla v_1|_2^{\gamma_p} |\nabla v_2|_2^{\gamma_q} |v_1|_2^{p-\gamma_p} |v_2|_2^{q-\gamma_q} \\
    & \leq C' \left(|\nabla v_1|_2^2+|\nabla v_2|_2^2\right)^{\frac{\gamma_p+\gamma_q}{2}} \left(|v_1|_2^2+|v_2|_2^2\right)^{\frac{p+q-\gamma_p-\gamma_q}{2}}.
\end{align*}
The last inequality is provided by Young's inequality.

\par Let \[C(N,p,q)^{-1} := \inf_{(v_1,v_2)\in H^1(\mathbb{R}^N)\setminus \{0\}} \frac{\left(|\nabla v_1|_2^2+|\nabla v_2|_2^2\right)^{\frac{\gamma_p+\gamma_q}{2}} \left(|v_1|_2^2+|v_2|_2^2\right)^{\frac{p+q-\gamma_p-\gamma_q}{2}}}{\int_{\mathbb{R}^N}(I_\omega *|v_1|^p)|v_2|^q}.\]
We have 
\begin{equation}
	\int_{\mathbb{R}^N}(I_\omega *|v_1|^p)|v_2|^q \leq C(N,p,q) \left(|\nabla v_1|_2^2+|\nabla v_2|_2^2\right)^{\frac{\gamma_p+\gamma_q}{2}} \left(|v_1|_2^2+|v_2|_2^2\right)^{\frac{p+q-\gamma_p-\gamma_q}{2}}.
	 \label{GN}
\end{equation}

\par Next, we introduce another important constant
\begin{equation}
	S_H:=\inf_{v\in D^{1,2}(\mathbb{R}^N) \setminus \{0\}}\frac{|\nabla v|_2^2}{(\int_{\mathbb{R}^N}(I_\omega*|v|^{2_\omega^*})|v|^{2_\omega^*})^{\frac{1}{2_\omega^*}}}.
	\label{SH}
\end{equation}

We now introduce the $L^2$-invariant scaling.
Define $t\diamond u(x):=t^{\frac{N}{2}}u(tx)$ for $t>0$ and $t\diamond(v_1,v_2):=(t \diamond v_1, t \diamond v_2)$. Notice that $|t \diamond u|_2^2=|u|^2_2$. Then for $(v_1,v_2) \in \mathcal{S}(\alpha_1,\alpha_2)$ , consider the map $\Psi_{(v_1,v_2)}^\theta : (0,+\infty) \to \mathbb{R}$,
\begin{equation*}
	\begin{split}
		\Psi_{(v_1,v_2)}^\theta(t):=&\mathcal{J}_\theta(t\diamond v_1,t\diamond v_2)=\frac{t^2}{2}(|\nabla v_1|_2^2+|\nabla v_2|_2^2)-\frac{t^{22_\omega^*}}{22_\omega^*}\int_{\mathbb{R}^N}((I_\omega*|v_1|^{2_\omega^*})|v_1|^{2_\omega^*}+\\
		&(I_\omega*|v_2|^{2_\omega^*})|v_2|^{2_\omega^*})-\theta t^{\gamma_p+\gamma_q}\int_{\mathbb{R}^N}(I_\omega*|v_1|^p)|v_2|^q-\varepsilon \int_{\mathbb{R}^N}v_1v_2.
	\end{split}
\end{equation*}
By direct computation,  $(\Psi_{(v_1,v_2)}^\theta)'(t)=\frac{1}{t}P_\theta(t\diamond v_1,t\diamond v_2)$. Hence, 
\[\mathcal{P}_\theta(\alpha_1,\alpha_2)=\{(v_1,v_2) \in \mathcal{S}(\alpha_1,\alpha_2) :(\Psi_{(v_1,v_2)}^\theta)'(1)=0\}.\]
We consider the following decomposition of $\mathcal{P}_\theta(\alpha_1,\alpha_2)$:
\begin{gather*}
	\mathcal{P}_\theta^+(\alpha_1,\alpha_2)=\{(v_1,v_2) \in \mathcal{S}(\alpha_1,\alpha_2):(\Psi_{(v_1,v_2)}^\theta)''(1)>0\},\\
	\mathcal{P}_\theta^0(\alpha_1,\alpha_2)=\{(v_1,v_2) \in \mathcal{S}(\alpha_1,\alpha_2):(\Psi_{(v_1,v_2)}^\theta)''(1)=0\},\\
	\mathcal{P}_\theta^-(\alpha_1,\alpha_2)=\{(v_1,v_2) \in \mathcal{S}(\alpha_1,\alpha_2):(\Psi_{(v_1,v_2)}^\theta)''(1)<0\}.
\end{gather*}

    \section{Existence results when $\theta>0 $ and $ \varepsilon>0$ }
    
    This section is devoted to proving the existence results when $\theta>0 $ and $ \varepsilon>0$. Before this, we need regularity conditions for the nonlocal terms.
    \begin{lem}\cite[Lemma 3.2]{moroz2015existence}
    	Suppose $N \geq 2$, $0<\omega,s<2$, if $P,Q \in L^{\frac{2N}{\omega+2}}(\mathbb{R}^N)+L^{\frac{2N}{\omega}}(\mathbb{R}^N)$ and $\frac{\omega}{N}<s<2-\frac{\omega}{N}$, then for each $\epsilon>0$, one can find $C_{\epsilon,s}>0$ satisfying
    	\[ \int_{\mathbb{R}^N}[I_\omega*(P|\varphi|^s)]Q|\varphi|^{2-s} \leq \epsilon \int_{\mathbb{R}^N} |\nabla \varphi|^2 + C_{\epsilon,s}\int_{\mathbb{R}^N} |\varphi|^2,            \qquad \forall \, \varphi \in H^1(\mathbb{R}^N).
    	\] \label{Lemma3.1 Moroz}
    \end{lem}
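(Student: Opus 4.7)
The plan is to split $P$ and $Q$ into a small piece in $L^{2N/(\omega+2)}$ and a bounded remainder that can be absorbed into $L^{2N/\omega}$, and then to bound each of the four resulting cross-terms by combining the Hardy--Littlewood--Sobolev inequality, Hölder's inequality, the Sobolev embedding of $H^1(\mathbb{R}^N)$ into $L^{2N/(N-2)}(\mathbb{R}^N)$, and Young's inequality. Concretely, fix $\delta>0$ (to be chosen in terms of $\epsilon$), write $P=P_1+P_2$ with $P_1\in L^{2N/(\omega+2)}$ and $P_2\in L^{2N/\omega}$, and truncate $P_1$ at a level $M$ large enough that $\tilde P_1:=P_1\chi_{\{|P_1|>M\}}$ satisfies $\|\tilde P_1\|_{L^{2N/(\omega+2)}}\le\delta$. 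The bounded remainder $P_1\chi_{\{|P_1|\le M\}}\in L^{2N/(\omega+2)}\cap L^\infty\subset L^{2N/\omega}$ can then be absorbed into $\tilde P_2\in L^{2N/\omega}$. Do the same for $Q$, and expand the integral into the four cross-terms $I_{ij}:=\int[I_\omega*(\tilde P_i|\varphi|^s)]\tilde Q_j|\varphi|^{2-s}$, $i,j\in\{1,2\}$.

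For $I_{11}$, pair both $|\varphi|^s$ and $|\varphi|^{2-s}$ with the Sobolev embedding into $L^{2N/(N-2)}$; HLS then gives $I_{11}\le C\|\tilde P_1\|_{L^{2N/(\omega+2)}}\|\tilde Q_1\|_{L^{2N/(\omega+2)}}|\nabla\varphi|_2^2\le C\delta^2|\nabla\varphi|_2^2$. For $I_{22}$, pair them instead with the trivial embedding into $L^2$; admissibility of the HLS exponents here is exactly equivalent to the assumption $\omega/N<s<2-\omega/N$, giving $I_{22}\le C\|\tilde P_2\|_{L^{2N/\omega}}\|\tilde Q_2\|_{L^{2N/\omega}}|\varphi|_2^2$. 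For the mixed terms $I_{12}$ and $I_{21}$, choose an intermediate Lebesgue exponent $r\in(2,2N/(N-2))$ so that the Hölder budget for $|\varphi|^s$ and $|\varphi|^{2-s}$ matches the HLS constraint on the pair $(L^{2N/(\omega+2)},L^{2N/\omega})$; interpolation between $L^2$ and $L^{2N/(N-2)}$ bounds $\|\varphi\|_{L^r}$ by $|\varphi|_2^\alpha|\nabla\varphi|_2^{1-\alpha}$, so the resulting estimate has the form $C\|\tilde P_1\|_{L^{2N/(\omega+2)}}\|\tilde Q_2\|_{L^{2N/\omega}}|\varphi|_2^a|\nabla\varphi|_2^b$ with $a+b=2$ and $b\in(0,2)$. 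Young's inequality then converts this into $\epsilon|\nabla\varphi|_2^2+C_\epsilon|\varphi|_2^2$.

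Summing the four contributions and taking $\delta$ small enough in terms of $\epsilon$ absorbs the $|\nabla\varphi|_2^2$ parts of $I_{11}$ and of the mixed terms into a single $\epsilon|\nabla\varphi|_2^2$, while the $I_{22}$ bound and the Young residuals coalesce into $C_{\epsilon,s}|\varphi|_2^2$. The main obstacle, and the reason the hypothesis $\omega/N<s<2-\omega/N$ is sharp, is the bookkeeping for the mixed terms: one must verify that an intermediate exponent $r\in[2,2N/(N-2)]$ exists which simultaneously satisfies the HLS constraint on the $(L^{2N/(\omega+2)},L^{2N/\omega})$ pairing and stays within the Sobolev range. The assumed range of $s$, together with $N\ge 2$, provides precisely the slack required for such an $r$ to exist in every case.
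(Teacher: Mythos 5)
The paper does not prove this lemma at all: it is imported verbatim as Lemma 3.2 of Moroz and Van Schaftingen \cite{moroz2015existence}, so there is no internal proof to compare against. Your argument is, in substance, the proof from that source: truncate to split $P$ and $Q$ into a piece of small $L^{2N/(\omega+2)}$-norm plus an $L^{2N/\omega}$ remainder, expand into four cross terms, and close each with Hardy--Littlewood--Sobolev, H\"older, Sobolev embedding or interpolation, and Young. Your exponent bookkeeping is correct for $N\ge 3$: the small--small term pairs with $\|\varphi\|_{2N/(N-2)}^{2}\lesssim|\nabla\varphi|_2^2$; the $L^{2N/\omega}$--$L^{2N/\omega}$ term pairs with $|\varphi|_2^2$ and its HLS admissibility is exactly $\omega/N<s<2-\omega/N$ (so the hypothesis is already forced by this diagonal term, not primarily by the mixed terms as your last paragraph suggests); and the mixed terms require $\tfrac{s}{a}+\tfrac{2-s}{b}=1-\tfrac1N$, which lies strictly inside the attainable range $[\tfrac{N-2}{N},1]$, so an interpolation exponent exists with gradient power strictly below $2$ and Young's inequality closes the estimate.

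The one point where your proof does not cover the statement as written is the endpoint $N=2$. There $H^1(\mathbb{R}^2)$ does not embed into $L^{2N/(N-2)}=L^\infty$, and the scaling-exact HLS constraint for the small--small term forces $\tfrac{s}{a}+\tfrac{2-s}{b}=1-\tfrac{2}{N}=0$, i.e.\ $a=b=\infty$, so that term cannot be handled by your pairing; a genuinely different treatment of the $L^{2N/(\omega+2)}$ components is needed in dimension $2$. This is immaterial for the present paper, which only invokes the lemma for $N=3,4$, but as a proof of the quoted statement your argument is complete only for $N\ge3$.
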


	\begin{lem}
		If $ N \geq 2 ,\,  P_{ij} ,\, Q_{ij} \in L^{\frac{2N}{\omega+2}}(\mathbb{R}^N)+L^{\frac{2N}{\omega}}(\mathbb{R}^N)$, $i,j\in \{1,2\}$, $(v_1,v_2) \in H$ solves the following system
	\begin{equation}
		\left\{
		\begin{aligned}
		-\Delta v_1 + \mu_1 v_1 &= ( I_\omega * P_{11}v_1) Q_{11} + ( I_\omega * P_{12}v_2)Q_{12}+ \varepsilon v_2, \qquad \text{in} \,\, \mathbb{R}^N, \\
		-\Delta v_2 + \mu_2 v_2 &= ( I_\omega * P_{21}v_2 ) Q_{21} + ( I_\omega * P_{22}v_1)Q_{22} + \varepsilon v_1 , \qquad \text{in} \,\, \mathbb{R}^N ,
		\end{aligned}
		\right.
		\label{system3.1}
	\end{equation}
		then for any $p \in [2,\frac{N}{\omega}\cdot\frac{2N}{N-2})$, $v_1,v_2 \in L^p(\mathbb{R}^N)$.
		\label{Lemma3.2提升可积性}
	\end{lem}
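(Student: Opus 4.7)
The plan is to run a nonlocal Moser-type bootstrap on both components $v_1,v_2$ simultaneously, adapting the Moroz estimate (Lemma~\ref{Lemma3.1 Moroz}) to the coupled setting of \eqref{system3.1}. Starting from $H^1(\mathbb{R}^N)\hookrightarrow L^2(\mathbb{R}^N)\cap L^{2^*}(\mathbb{R}^N)$, I will iteratively upgrade the $L^p$-integrability of $(v_1,v_2)$, summing the two tested equations so that the off-diagonal nonlocal terms and the linear coupling $\varepsilon v_j$ are absorbed symmetrically.

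Fix $L>0$, let $v_{i,L}:=\mathrm{sgn}(v_i)\min\{|v_i|,L\}$, and for a parameter $s\geq 1$ test the $i$-th equation of \eqref{system3.1} with the admissible function $\varphi_i:=v_i|v_{i,L}|^{2(s-1)}\in H^1(\mathbb{R}^N)$. Expanding the Dirichlet form gives coercive control on $\int|\nabla|v_{i,L}|^s|^2$, modulo the sign-indefinite mass term $\mu_i\int v_i^2|v_{i,L}|^{2(s-1)}$ which will be absorbed at the next step. On the right, apply Lemma~\ref{Lemma3.1 Moroz} with splitting exponent $s_0=1\in(\omega/N,\,2-\omega/N)$ to the function $\psi_i:=v_i|v_{i,L}|^{s-1}$ to bound the diagonal nonlocal term by $\epsilon\int|\nabla\psi_i|^2+C_\epsilon\int\psi_i^2$. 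The off-diagonal contribution $\int(I_\omega*(P_{ij}v_j))Q_{ij}\varphi_i$ is first dominated via the Cauchy--Schwarz inequality for the Riesz bilinear form by a geometric mean of two diagonal-type integrals, to each of which Lemma~\ref{Lemma3.1 Moroz} applies. The linear coupling $\varepsilon\int v_j\varphi_i$ is handled, after summing over $i=1,2$, by Young's inequality and absorbed into $C\sum_i\|v_i\|_{L^{2s}}^{2s}$.

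Absorbing the small gradient contributions into the left-hand side and invoking the Sobolev embedding $\||v_{i,L}|^s\|_{L^{2^*}}^2\lesssim\||v_{i,L}|^s\|_{H^1}^2$ yield the key estimate
\[
\sum_{i=1}^{2}\bigl\||v_{i,L}|^s\bigr\|_{L^{2^*}}^{2}\;\lesssim\;\sum_{i=1}^{2}\|v_i\|_{L^{2s}}^{2s}
\]
whenever $v_1,v_2\in L^{2s}$. Passing $L\to\infty$ by monotone convergence produces $v_i\in L^{2^*s}$. Starting from $s=1$ and iterating, each step multiplies the exponent by $2^*/2=N/(N-2)$, which is admissible so long as $2s<2N/\omega$ --- the Hardy--Littlewood--Sobolev cap forced by the $L^{2N/\omega}$-part of $P_{ij}$, $Q_{ij}$. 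Exhausting the iteration delivers $v_i\in L^p$ for every $p<2^{\ast}\cdot\tfrac{N}{\omega}=\tfrac{2N^{2}}{\omega(N-2)}$, exactly as claimed. The main obstacle lies in the off-diagonal nonlocal coupling: Lemma~\ref{Lemma3.1 Moroz} is only stated for diagonal bilinear forms, so one must reduce the cross terms to diagonal ones by Cauchy--Schwarz of the Riesz bilinear form and then sum the two per-equation estimates, so that both the cross nonlocal terms and the linear coupling $\varepsilon v_j$ enter symmetrically and are controlled by the $L^{2s}$-data from the preceding iteration.
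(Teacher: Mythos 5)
Your overall strategy --- a nonlocal Brezis--Kato/Moser iteration built on Lemma \ref{Lemma3.1 Moroz}, with the off-diagonal terms reduced to diagonal ones by Cauchy--Schwarz for the Riesz bilinear form and the two equations summed --- is the same as the paper's. But there is a genuine gap at the key step. After testing with $\varphi_i=v_i|v_{i,L}|^{2(s-1)}$, the diagonal nonlocal term is
\[
\int_{\mathbb{R}^N}\bigl[I_\omega*(P_{11}v_1)\bigr]\,Q_{11}\,v_1|v_{1,L}|^{2(s-1)},
\]
in which the function sitting \emph{inside} the convolution is the untruncated, unweighted $v_1$, while the weight $|v_{1,L}|^{2(s-1)}$ sits only outside. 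Applying Lemma \ref{Lemma3.1 Moroz} with exponent $s_0=1$ to $\psi_1=v_1|v_{1,L}|^{s-1}$ controls $\int[I_\omega*(P_{11}|\psi_1|)]Q_{11}|\psi_1|$, which is a different quantity: $P_{11}|\psi_1|=P_{11}|v_1|\,|v_{1,L}|^{s-1}$ is smaller than $P_{11}|v_1|$ where $|v_1|<1$, while the outer factor $|\psi_1|$ is larger there than $|v_1||v_{1,L}|^{2(s-1)}$, and since the Riesz potential is nonlocal these two effects cannot be traded pointwise. The only split that matches the term (on the set $\{|v_1|\le L\}$) is the asymmetric one $|\varphi|^{s_0}=|v_1|$, $|\varphi|^{2-s_0}=|v_1|^{2s-1}$ with $\varphi=|v_{1,L}|^{s}$ and $s_0=1/s=2/p$; the hypothesis $s_0>\omega/N$ of Lemma \ref{Lemma3.1 Moroz} is then exactly what forces $p<\frac{2N}{\omega}$ at each step. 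As written, your argument with $s_0=1$ carries no such constraint, so it would ``prove'' $v_i\in L^p$ for every $p<\infty$; the restriction $2s<\frac{2N}{\omega}$ you invoke has no source in your estimates, which is a symptom of the mismatch.

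A second, related omission: even with the correct exponent $s_0=2/p$, the convolution still contains $v_1$ rather than its truncation, so after applying Lemma \ref{Lemma3.1 Moroz} a remainder supported on $\{|v_1|>L\}$ survives. The paper bounds this remainder by the Hardy--Littlewood--Sobolev inequality and shows it vanishes as the truncation level tends to infinity by dominated convergence; this is precisely where the inductive hypothesis $v_i\in L^p$ is used quantitatively. Your ``pass $L\to\infty$ by monotone convergence'' does not see this term. (The paper also regularizes $P_{ij},Q_{ij}$ by $L^{\frac{2N}{\omega}}$ approximations and solves an auxiliary Lax--Milgram system, so that all manipulations are performed on objects for which the relevant integrals are a priori finite; working directly on $(v_1,v_2)$ as you do requires at least a justification of why each integral you write down converges.)
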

	\begin{proof}
		Applying Lemma \ref{Lemma3.1 Moroz} with $s=1$, for $\forall~ \xi \in H^1(\mathbb{R}^N)$, there exist $\delta >0$ such that 
		\[\int_{\mathbb{R}^N} (I_\omega*|P_{i1}\xi|)|Q_{i1}\xi| \leq \frac{1}{4}|\nabla \xi|_2^2+\frac{\delta}{4}|\xi|_2^2, \qquad  i=1,2.
		\]
		By Lemma \ref{Lemma3.1 Moroz} and (3.3) in \cite{ghimenti2016nodal}, for $(\xi,\zeta)\in H$, $i=1,2$, we obtain
		\begin{align*}
			\int_{\mathbb{R}^N} (I_\omega*|P_{i2}\xi|)|Q_{i2}\zeta|&\leq (\int_{\mathbb{R}^N} (I_\omega*|P_{i2}\xi|)|P_{i2}\xi| )^{\frac{1}{2}} (\int_{\mathbb{R}^N} (I_\omega*|Q_{i2}\zeta|)|Q_{i2}\zeta| )^{\frac{1}{2}}\\
			&\leq (\frac{1}{4}|\nabla \xi|_2^2+\frac{\delta}{4}|\xi|_2^2)^{\frac{1}{2}} (\frac{1}{4}|\nabla \zeta|_2^2+\frac{\delta}{4}|\zeta|_2^2)^{\frac{1}{2}}\\
			&\leq \frac{1}{8}\int_{\mathbb{R}^N}(|\nabla \xi|^2+|\nabla \zeta|^2)+\frac{\delta}{8}\int_{\mathbb{R}^N}(|\xi|^2+| \zeta|^2).
		\end{align*}
		
		\par Let $(P_{ij,k})_{k \in \mathbb{N}}$ and $(Q_{ij,k})_{k \in \mathbb{N}}$ be sequences in $L^{\frac{2N}{\omega}}(\mathbb{R}^N)$ satisfying $|P_{ij,k}| \leq |P_{ij}|$, $|Q_{ij,k}| \leq |Q_{ij}|$ and $P_{ij,k} \to P_{ij}$ and $Q_{ij,k} \to Q_{ij}$ almost everywhere in $\mathbb{R}^N$, $i,j=1,2$. For $(\xi_1,\xi_2),(\zeta_1,\zeta_2) \in H$, define the form $a_k:H\times H\to \mathbb{R}$ as follows:
	    \begin{align*}
	    	a_k((\xi_1,\xi_2),(\zeta_1,\zeta_2))&=\int_{\mathbb{R}^N} (\nabla \xi_1 \nabla \zeta_1+\delta \xi_1 \zeta_1)-\int_{\mathbb{R}^N}(I_\omega*|P_{11,k}\xi_1|)|Q_{11,k}\zeta_1|\\
	    	&-\int_{\mathbb{R}^N}(I_\omega*|P_{12,k}\xi_2|)|Q_{12,k}\zeta_1|
	    	+\int_{\mathbb{R}^N} (\nabla \xi_2 \nabla \zeta_2+\delta \xi_2 \zeta_2)\\
	    	&-\int_{\mathbb{R}^N}(I_\omega*|P_{21,k}\xi_2|)|Q_{21,k}\zeta_2|-\int_{\mathbb{R}^N}(I_\omega*|P_{22,k}\xi_1|)|Q_{22,k}\zeta_2|.
	    \end{align*}
	    
	    \par Given the bilinearity and coercivity of $a_k$, the Lax–Milgram theorem guarantees the existence and uniqueness of a solution $(v_{1,k},v_{2,k}) \in H$ to the following system.
	    {\small
	    \begin{equation}
	    	\left\{
	    	\begin{aligned}
	    		-\Delta v_{1,k}+ \delta v_{1,k} &= ( I_\omega * P_{11,k}v_{1,k}) Q_{11,k} + ( I_\omega * P_{12,k}v_{2,k})Q_{12,k}+ \varepsilon v_2+(\delta-\mu_1)v_1, \\
	    		-\Delta v_{2,k} + \delta v_{2,k} &= ( I_\omega * P_{21,k}v_{2,k} ) Q_{21,k} + ( I_\omega * P_{22,k}v_{1,k})Q_{22,k} + \varepsilon v_1+(\delta-\mu_2)v_2 ,
	    	\end{aligned}
	    	\right.
	    	\label{system3.2} 
	    \end{equation}
	    }
	    where $(v_1,v_2) \in H$ is the solution to system \eqref{system3.1}. It can be verified that  $(v_{1,k},v_{2,k}) \rightharpoonup (v_1,v_2)$ in $H$.
	    
	    Let $w_k=|v_{1,k}|+|v_{2,k}|$, $P_k=\sum\limits_{i,j=1}^{2}|P_{ij,k}|$ and $Q_k=\sum\limits_{i,j=1}^{2}|Q_{ij,k}|$. For $\lambda>0$, we introduce the truncation $w_{k,\lambda}: \mathbb{R}^N \to \mathbb{R}$ defined by
	    \[
	    w_{k,\lambda}(x)= 
	    \begin{cases}
	    	\lambda, & \text{if } w_k(x)\geq \lambda, \\
	    	w_{k,\lambda}(x), & \text{if } w_k(x)< \lambda.
	    \end{cases}
	    \]
	   
	   \par By \cite[Lemma A]{kato1972schrodinger}, $-sgn(v_{i,k})\Delta v_{i,k} \geq- \Delta |v_{i,k}|$,  in the sense of distribution, $i=1,2$.
	    Testing \eqref{system3.2} with $(sgn(v_{1,k}),sgn(v_{2,k}))$, we have
	    \begin{equation}
	    	-\Delta w_k+\delta w_k \leq (I_\omega*(P_kw_k))Q_k+C(|v_1|+|v_2|).
	    	\label{3.3}
	    \end{equation}
	    \par Now we test \eqref{3.3} with $w_{k,\lambda}^{p-1} \in H^1(\mathbb{R}^N)$:
	    {\small
	    \begin{align*}
	    	\int_{\mathbb{R}^N} \left( \frac{4(p-1)}{p^2}|\nabla (w_{k,\lambda})^\frac{p}{2}|^2+\delta |w_{k,\lambda}|^p \right) & \leq \int_{\mathbb{R}^N} \left( (p-1)|w_{k,\lambda}|^{p-2}|\nabla w_{k,\lambda}|^2+\delta |w_{k,\lambda}|^{p-1}w_k \right)\\
	    	&\leq \int_{\mathbb{R}^N} \left( (I_\omega*(P_kw_k))Q_k|w_{k,\lambda}|^{p-1}+C(|v_1|+|v_2|)|w_{k,\lambda}|^{p-1} \right).
	    \end{align*}
	    }
	     
	     \par Let $P=\sum\limits_{i,j=1}^{2}|P_{ij}|$ and $Q=\sum\limits_{i,j=1}^{2}|Q_{ij}|$. If $p<\frac{2N}{\omega}$, applying Lemma \ref{Lemma3.1 Moroz} with $s=\frac{2}{p}$, we have
	    \begin{align*}
	    	\int_{\mathbb{R}^N} (I_\omega*(P_kw_{k,\lambda}))Q_k|w_{k,\lambda}|^{p-1}&\leq \int_{\mathbb{R}^N}  (I_\omega*(Pw_{k,\lambda}))Q|w_{k,\lambda}|^{p-1}\\
	    	& \leq \frac{2(p-1)}{p^2} \int_{\mathbb{R}^N} |\nabla (w_{k,\lambda})^{\frac{p}{2}}|^2+C' |w_{k,\lambda}|_p^p.
	    \end{align*}
	    Thus, we obtain
	    \[ \frac{2(p-1)}{p^2} \int_{\mathbb{R}^N} |\nabla (w_{k,\lambda})^{\frac{p}{2}}|^2 \leq C''\int_{\mathbb{R}^N}(|w_k|^p+|v_1|^p+|v_2|^p)+\int_{A_{k,\lambda}} (I_\omega*(P_kw_k))Q_k|w_k|^{p-1}, \]
	    where $A_{k,\lambda}=\{x \in \mathbb{R}^N: w_k(x)>\lambda\}$. By Proposition \ref{HLS}, we get
	    \[\int_{A_{k,\lambda}} (I_\omega*(P_kw_k))Q_k|w_k|^{p-1} \leq C'''\left(\int_{A_{k,\lambda}}|P_kw_k|^t \right)^{\frac{1}{t}}
	    \left(\int_{\mathbb{R}^N}(Q_k|w_k|^{p-1})^r\right)^{\frac{1}{r}},\]
	    where $\frac{1}{t}=\frac{\omega}{2N}+\frac{1}{p}$ and $\frac{1}{r}=\frac{\omega}{2N}+1-\frac{1}{p}$. If $w_k \in L^p(\mathbb{R}^N)$, then $P_kw_k \in L^t(\mathbb{R}^N)$ and $Q_k|w_k|^{p-1} \in L^r(\mathbb{R}^N)$ by Hölder's inequality. Using Lebesgue’s dominated convergence theorem, one derives that
	    \[ \lim\limits_{\lambda \to +\infty} \int_{A_{k,\lambda}} (I_\omega*(P_kw_k))Q_k|w_k|^{p-1}=0.
	    \]
	    
	    \par Therefore, $\limsup\limits_{k \to +\infty} |w_k|_{\frac{Np}{N-2}} \leq \tilde{C} \limsup\limits_{k \to +\infty} |w_k|_p$. Finally, by iterating over p finitely many times, one covers the range $p \in [2,\frac{N}{\omega}\cdot\frac{2N}{N-2})$.
	\end{proof}

	\begin{lem}
		Suppose $(v_1,v_2) \in H$ solves the following system
		 \begin{equation*}
		 	\left\{
		 	\begin{aligned}
		 		-\Delta v_1 + \mu_1 v_1 &= ( I_\omega * |v_1|^{2_\omega^*} ) |v_1|^{2_\omega^* -2} v_1 + \theta p( I_\omega * |v_2|^q)|v_1|^{p-2} v_1 + \varepsilon v_2, \qquad in \,\, \mathbb{R}^N, \\
		 		-\Delta v_2 + \mu_2 v_2 &= ( I_\omega * |v_2|^{2_\omega^*} ) |v_2|^{2_\omega^* -2} v_2 + \theta q( I_\omega * |v_1|^p)|v_2|^{q-2} v_2 + \varepsilon v_1,  \qquad in \,\, \mathbb{R}^N,
		 	\end{aligned}
		 	\right.
		 \end{equation*}
		 then $(v_1,v_2)$ is a smooth solution.
		 \label{smooth}
	\end{lem}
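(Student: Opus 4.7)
The plan is a standard two-stage bootstrap: first I would upgrade integrability via the nonlocal Brezis--Kato estimate in Lemma \ref{Lemma3.2提升可积性}, and then I would run $L^p$-elliptic regularity together with Schauder theory to reach $C^\infty$.

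In the first stage I would recast the Choquard system in the abstract form \eqref{system3.1} by setting
\begin{align*}
&P_{11}=\mathrm{sgn}(v_1)|v_1|^{2_\omega^*-1},\ Q_{11}=|v_1|^{2_\omega^*-2}v_1,\ P_{12}=\theta p\,\mathrm{sgn}(v_2)|v_2|^{q-1},\ Q_{12}=|v_1|^{p-2}v_1,
\end{align*}
together with symmetric choices for $P_{21},Q_{21},P_{22},Q_{22}$. The magnitudes of these coefficients are powers $|v_i|^{2_\omega^*-1}=|v_i|^{(2+\omega)/(N-2)}$ (critical case) or $|v_i|^{p-1},|v_i|^{q-1}$ with exponent in $(\tfrac{\omega}{N},\tfrac{2+\omega}{N-2})$ (subcritical case). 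The critical exponent places $|v_i|^{2_\omega^*-1}$ directly in $L^{\frac{2N}{\omega+2}}(\mathbb{R}^N)$ via the Sobolev embedding $H^1\hookrightarrow L^{2^*}$. For the subcritical exponents, decomposing $\mathbb{R}^N=\{|v_i|\leq 1\}\cup\{|v_i|>1\}$ dominates the two pieces by $|v_i|^{\omega/N}\in L^{\frac{2N}{\omega}}$ and $|v_i|^{(2+\omega)/(N-2)}\in L^{\frac{2N}{\omega+2}}$, respectively. Hence every $P_{ij},Q_{ij}$ lies in $L^{\frac{2N}{\omega+2}}(\mathbb{R}^N)+L^{\frac{2N}{\omega}}(\mathbb{R}^N)$, and Lemma \ref{Lemma3.2提升可积性} yields $v_1,v_2\in L^r(\mathbb{R}^N)$ for every $r\in[2,\tfrac{2N^2}{\omega(N-2)})$.

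Next I would promote this integrability to boundedness of the nonlocal coefficients. Because $\omega<N$, the range of admissible $r$ is broad enough that one can choose $a_1<\tfrac{N}{\omega}<a_2$ with $|v_i|^{2_\omega^*}\in L^{a_1}\cap L^{a_2}$; splitting $I_\omega=I_\omega\chi_{B_1}+I_\omega\chi_{B_1^c}$ and applying Young's inequality separately then gives $I_\omega*|v_i|^{2_\omega^*}\in L^\infty(\mathbb{R}^N)$, and the same reasoning applies to $I_\omega*|v_j|^q$. With bounded nonlocal coefficients each right-hand side becomes a product of an $L^\infty$ function with a power of $v_i$, plus the linear term $\varepsilon v_j$, and thus lies in $L^r$ for every $r$ in the enlarged range. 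The $L^p$-theory for $-\Delta+\mu_i$ yields $v_i\in W^{2,r}_{\mathrm{loc}}(\mathbb{R}^N)$ for arbitrarily large $r$, and Sobolev embedding delivers $v_i\in C^{1,\alpha}_{\mathrm{loc}}(\mathbb{R}^N)$. Once $v_i$ is Hölder continuous and integrable, differentiating under the convolution integral shows $I_\omega*|v_i|^{2_\omega^*},I_\omega*|v_j|^q\in C^{1,\alpha}_{\mathrm{loc}}$, so the right-hand sides are themselves $C^{1,\alpha}_{\mathrm{loc}}$; Schauder estimates lift $v_i$ to $C^{2,\alpha}_{\mathrm{loc}}$, and iterating the argument produces $v_i\in C^\infty(\mathbb{R}^N)$.

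The main obstacle is the first step, namely verifying the Moroz-type hypothesis of Lemma \ref{Lemma3.2提升可积性} uniformly across the subcritical range $2_{\omega,*}<p,q<2_\omega^*$: the two target exponents $\tfrac{2N}{\omega+2}$ and $\tfrac{2N}{\omega}$ correspond exactly to the critical and mass-critical endpoints of this interval, so the level-set decomposition of the subcritical coefficients has to be performed with care to cover the full open range at once. Beyond this identification, the remaining steps form a routine elliptic bootstrap.
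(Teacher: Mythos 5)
Your proposal follows essentially the same route as the paper: apply Lemma \ref{Lemma3.2提升可积性} with the obvious choices of $P_{ij},Q_{ij}$ (verified via the level-set splitting into $L^{\frac{2N}{\omega+2}}+L^{\frac{2N}{\omega}}$), deduce that the Riesz-potential coefficients are bounded, and then run a local subcritical elliptic bootstrap to $W^{2,r}_{\mathrm{loc}}$ for all $r$. The only differences are cosmetic and, if anything, slightly to your credit: you bootstrap the two coupled equations directly instead of summing them via Kato's inequality into a single scalar inequality for $|v_1|+|v_2|$ as the paper does, and your $L^{a_1}\cap L^{a_2}$ kernel-splitting argument for $I_\omega*|v_i|^{2_\omega^*}\in L^\infty$ is more careful than the paper's bare appeal to membership in the endpoint space $L^{N/\omega}$ (just note that your intermediate claim of $W^{2,r}_{\mathrm{loc}}$ for \emph{arbitrarily large} $r$ requires the iteration you only invoke later, since one application of $L^p$-theory from the enlarged but still finite integrability range does not reach all $r$).
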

	\begin{proof}
		Applying Lemma \ref{Lemma3.2提升可积性} with $P_{11}=Q_{11}=|v_1|^{2_\omega^* -2} v_1$, $P_{12}=\frac{1}{q}Q_{22}=|v_2|^{q-2}v_2$, $P_{21}=Q_{21}= |v_2|^{2_\omega^* -2} v_2$ and
		$P_{22}=\frac{1}{p}Q_{12}=|v_1|^{p-2}v_1$. Thus, $P_{ij} ,\, Q_{ij} \in L^{\frac{2N}{\omega+2}}(\mathbb{R}^N)+L^{\frac{2N}{\omega}}(\mathbb{R}^N) ,\, i,j=1,2$.
		Hence, $v_1,v_2 \in L^p(\mathbb{R}^N)$, for any $p \in [2,\frac{N}{\omega}\cdot\frac{2N}{N-2})$. Then we have $|v_2|^q,|v_1|^{2_\omega^*} \in L^{\frac{N}{\omega}}(\mathbb{R}^N)$, which implies that $I_\omega*|v_2|^q$, $I_\omega*|v_1|^{2_\omega^*} \in L^{\infty}(\mathbb{R}^N)$. Therefore,
		\begin{equation}
			-\Delta v_1 +\mu_1 v_1 \leq C (|v_1|^{\frac{\omega}{N}}+|v_1|^{\frac{\omega+2}{N-2}})+\varepsilon v_2.
			\label{3.4}
		\end{equation}
		Similarly, we obtain
		\begin{equation}
		-\Delta v_2 +\mu_2 v_2 \leq C (|v_2|^{\frac{\omega}{N}}+|v_2|^{\frac{\omega+2}{N-2}})+\varepsilon v_1.
		\label{3.5}
		\end{equation}
		By summing \eqref{3.4} and \eqref{3.5}, it follows from \cite[Lemma A]{kato1972schrodinger} that
		\[ -\Delta (|v_1|+|v_2|)+(|v_1|+|v_2|)  \leq C \left((|v_1|+|v_2|)^{\frac{\omega}{N}}+(|v_1|+|v_2|)^{\frac{\omega+2}{N-2}}\right).
		\]
		
		\par Similar to the proof of Theorem 2 in \cite{moroz2015existence}, by the classical bootstrap method for the Sobolev subcritical local problems in bounded domains, one derives that $|v_1|+|v_2| \in W_{loc}^{2,q}(\mathbb{R}^N)$ for all $q \geq 1$.
		Then by \cite[Theorem 4.1]{Han2011} and \eqref{3.4}, we get $v_1 \in W_{loc}^{2,q}(\mathbb{R}^N)$ for all $q \geq 1$.
	\end{proof}
	
	Having established the regularity conditions, we now proceed to review the following Liouville type lemma.
	\begin{lem}
		\cite[Lemma A.2]{ikoma2014compactness} Assume $q \in (0,+ \infty)$ for $N=1,2$, while $q \in (0,\frac{N}{N-2}]$ for $N \geq 3$. Suppose $v \in L^q(\mathbb{R}^N)$ is smooth, nonnegative, and satisfies $- \Delta v \geq 0$ in $\mathbb{R}^N$. It follows that $v \equiv 0$.
		\label{Liouville}
	\end{lem}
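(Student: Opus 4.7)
The plan is to use the Riesz decomposition of nonnegative superharmonic functions, together with the classical Liouville theorem for nonnegative harmonic functions and sharp decay estimates for the associated Newtonian or logarithmic potential. Throughout, let $\mu := -\Delta v$, which is a nonnegative Radon measure on $\mathbb{R}^N$ by hypothesis; since $v$ is smooth and nonnegative it is in $L^1_{\mathrm{loc}}$, so the decomposition machinery applies.

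First, in dimension $N \geq 3$, potential theory supplies a decomposition
\[
v(x) = \int_{\mathbb{R}^N} G(x-y) \, d\mu(y) + h(x),
\]
where $G(x)=c_N|x|^{-(N-2)}$ is the fundamental solution of $-\Delta$ and $h\geq 0$ is harmonic on $\mathbb{R}^N$, realized as the greatest harmonic minorant of $v$. In particular $0\leq h\leq v$, so $h\in L^q(\mathbb{R}^N)$. Since a nonnegative harmonic function on $\mathbb{R}^N$ is constant by the classical Liouville theorem, and constants lie in $L^q(\mathbb{R}^N)$ only when they vanish, I conclude $h\equiv 0$.

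Next, I would rule out a nontrivial $\mu$. If $\mu(B_\rho(y_0))>0$ for some ball $B_\rho(y_0)$, then a direct pointwise estimate yields
\[
v(x)\;\geq\;\int_{B_\rho(y_0)} G(x-y)\,d\mu(y)\;\geq\;c\,|x|^{-(N-2)}
\]
for all $|x|$ sufficiently large and some $c=c(\mu(B_\rho(y_0)),\rho,y_0)>0$. Raising to the $q$-th power and integrating on $\{|x|\geq R\}$ produces a divergent integral exactly when $(N-2)q\leq N$, i.e.\ when $q\leq N/(N-2)$, contradicting $v\in L^q(\mathbb{R}^N)$. Hence $\mu\equiv 0$, and therefore $v=h\equiv 0$.

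For the low-dimensional cases $N=1,2$, the same strategy works with the fundamental solution replaced by $-\tfrac12|x|$ or $-\tfrac{1}{2\pi}\log|x|$; now any nontrivial positive part of $\mu$ forces $G\ast\mu(x)\to-\infty$ as $|x|\to\infty$, which is incompatible with $v\geq 0$ once the harmonic remainder has been killed (again via Liouville in $\mathbb{R}^N$ and the bound $h\leq v\in L^q$). The main technical obstacle I anticipate is justifying the Riesz decomposition under the weak hypothesis $v\in L^q$ without any prior control on the total mass of $\mu$; I would handle it by truncating $\mu$ to balls $B_R$, applying the decomposition on $B_R$, and passing to the limit $R\to\infty$ via monotone convergence, using the nonnegativity of $v$ to keep the harmonic minorants well-defined in the limit.
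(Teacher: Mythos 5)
The paper does not prove this lemma at all: it is quoted verbatim from \cite[Lemma A.2]{ikoma2014compactness} and used as a black box (only for $N=3,4$), so there is no in-paper argument to compare against. Judged on its own, your proof for $N\geq 3$ is correct in substance: the decomposition $v=G*\mu+h$ with $h\geq 0$ harmonic, the Liouville theorem killing $h$, the lower bound $v(x)\geq c|x|^{-(N-2)}$ when $\mu\not\equiv 0$, and the divergence of $\int_{|x|\geq R}|x|^{-(N-2)q}\,dx$ precisely when $(N-2)q\leq N$ all fit together, and you correctly flag the need to justify the global Riesz representation by exhausting with balls. It is worth noting, though, that the standard proof (and the one in Ikoma's appendix) reaches the same lower bound much more cheaply: by the strong minimum principle $v>0$ everywhere unless $v\equiv 0$, and comparing $v$ on the annulus $1<|x|<R$ with the harmonic function $\bigl(\min_{\partial B_1}v\bigr)\frac{|x|^{2-N}-R^{2-N}}{1-R^{2-N}}$ and letting $R\to\infty$ gives $v(x)\geq c|x|^{2-N}$ directly, with no measure theory. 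Your route buys nothing extra here and imports the heaviest part of potential theory to prove its easiest consequence.

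The genuine gap is in your treatment of $N=1,2$. The Riesz decomposition theorem is a statement about Greenian domains, and $\mathbb{R}^1$ and $\mathbb{R}^2$ are not Greenian (they are parabolic), so there is no global representation $v=G*\mu+h$ with a greatest harmonic minorant on all of $\mathbb{R}^N$ in these dimensions. Moreover, your mechanism for killing $h$ relies on $G*\mu\geq 0$, hence $0\leq h\leq v\in L^q$; with the logarithmic or linear kernel the potential changes sign, so $h\leq v$ fails and that step collapses. The fix is elementary and bypasses the decomposition entirely: in $N=1$, $-v''\geq 0$ means $v$ is concave, and a nonnegative concave function on $\mathbb{R}$ is constant; in $N=2$, every nonnegative superharmonic function on $\mathbb{R}^2$ is constant (this is exactly the parabolicity of the plane, provable by the same annulus comparison with $\log R-\log|x|$). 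A nonzero constant is never in $L^q(\mathbb{R}^N)$ for finite $q$, so $v\equiv 0$. Since the paper only invokes the lemma for $N=3,4$, this defect does not affect its applications, but as a proof of the stated lemma the low-dimensional case needs to be rewritten along these lines.
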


\subsection{The $L^2$-subcritical case $p+q<\frac{2N+2\omega+4}{N}$}\leavevmode\par
	Denote\\ $\theta_1:=\frac{S_H^{\frac{(2-\gamma_p-\gamma_q)2_\omega^*}{22_\omega^*-2}}}{C(N,p,q)(\alpha_1^2+\alpha_2^2)^{\frac{p+q-\gamma_p-\gamma_q}{2}}} \cdot \frac{(22_\omega^*-2)(2-\gamma_p-\gamma_q)^\frac{2-\gamma_p-\gamma_q}{22_\omega^*-2}}{(22_\omega^*-\gamma_p-\gamma_q)^{\frac{22_\omega^*-\gamma_p-\gamma_q}{22_\omega^*-2}}}$ , $\theta_2:= \frac{1}{\gamma_p+\gamma_q}\theta_1$ , $\theta_3:=\frac{22_\omega^*+2-\gamma_p-\gamma_q}{42_\omega^*}\theta_1$.\\
	Let \[\theta_0:=min\{\theta_2,\theta_3\}.\]
	
	\par For $s>0$, we define\\
	\[h(s):= \frac{1}{2}s^2-As^{\gamma_p+\gamma_q}-Bs^{22_\omega^*}-\varepsilon\alpha_1\alpha_2  \;\;\text{and}\;\; g(s):=s^{2-(\gamma_p+\gamma_q)}-2\cdot2_\omega^*Bs^{22\omega^*-(\gamma_p+\gamma_q)},     \]
    where $A=\theta C(N,p,q)(\alpha_1^2+\alpha_2^2)^{\frac{p+q-(\gamma_p+\gamma_q)}{2}} \,,\; B=\frac{1}{2\cdot2_\omega^*S_H^{2_\omega^*}}$.
    
    \par Then $h'(s)=s^{\gamma_p+\gamma_q-1}\left( g(s)-(\gamma_p+\gamma_q)A \right)$ and $J_\theta(v_1,v_2) \geq h \left( (|\nabla v_1|_2^2+|\nabla v_2|_2^2)^{\frac{1}{2}} \right)$.\\
    The last inequality is provided by \eqref{GN} and \eqref{SH}.
   
   \par  Let
    \begin{equation}
     s_*:=(\frac{2-\gamma_p-\gamma_q}{22_\omega^*-\gamma_p-\gamma_q}S_H^{2_\omega^*})^{\frac{1}{22_\omega^*-2}} ,\; \varepsilon_*:=min\{\frac{\frac{1}{2}s_*^2-A_0s_*^{\gamma_p+\gamma_q}-Bs_*^{22_\omega^*}}{\alpha_1 \alpha_2},\, \frac{2_\omega^*-1}{\alpha_1\alpha_222_\omega^*}S_H^{\frac{2_\omega^*}{2_\omega^*-1}}\}, \label{s_*}
  \end{equation}
	where $A_0=\theta_0C(N,p,q)(\alpha_1^2+\alpha_2^2)^{\frac{p+q-\gamma_p-\gamma_q}{2}} $.
	
	\par By direct calculation we know $g(s)$ increases on $(0,s_*)$ and decreases on $(s_*,+\infty)$. 
	Assume $0<\theta<\theta_0$, $0<\varepsilon<\varepsilon_*$, then $g(s_*)>(\gamma_p+\gamma_q)A$,
	$h(s_*)>\varepsilon \alpha_1\alpha_2>0$. Therefore, $h(s)$ has two critical points $0<s_1<s_*<s_2$ with
	\[h(s_1)=\min\limits_{0<s<s_*} h(s)<0,\quad h(s_2)=\max_{s>0}h(s)>\varepsilon\alpha_1\alpha_2.
	\]
	
	\par Moreover, there exist $T_1>T_0>0$ satisfying $h(T_0)=h(T_1)=\varepsilon \alpha_1 \alpha_2$ and $h(s)>\varepsilon \alpha_1 \alpha_2$ if and only if $s \in (T_0,T_1)$.

	\begin{lem}
		Assume $0<\theta<\theta_0$, $0<\varepsilon<\varepsilon_*$, then for any $(v_1,v_2) \in \mathcal{S}(\alpha_1,\alpha_2)$, $\Psi_{(v_1,v_2)}^\theta(t)$ admits precisely two critical points $s_\theta(v_1,v_2)<t_\theta(v_1,v_2)$. Moreover,
\begin{enumerate}[label=(\roman*)]
			\item $\mathcal{P}_\theta^0(\alpha_1,\alpha_2)=\emptyset$ and $\mathcal{P}_\theta(\alpha_1,\alpha_2)$ is a submanifold of $H$.
			\item $s_\theta(v_1,v_2)\diamond (v_1,v_2) \in \mathcal{P}_\theta^+(\alpha_1,\alpha_2)$, $t_\theta(v_1,v_2)\diamond (v_1,v_2) \in \mathcal{P}_\theta^-(\alpha_1,\alpha_2)$ and $t \diamond (v_1,v_2) \in \mathcal{P}_\theta(\alpha_1,\alpha_2)$ exactly when $t=s_\theta(v_1,v_2) $ or $ t_\theta(v_1,v_2)$.
			\item $\Psi_{(v_1,v_2)}^\theta(s_\theta(v_1,v_2))=\min\{\Psi_{(v_1,v_2)}^\theta(t):t(\int_{\mathbb{R}^N}|\nabla v_1|_2^2+|\nabla v_2|_2^2)^{\frac{1}{2}}\leq T_0\}$.
			\item $\Psi_{(v_1,v_2)}^\theta(t)$ is strictly decreasing in $(t_\theta(v_1,v_2),+\infty)$ and  \[\Psi_{(v_1,v_2)}^\theta(t_\theta(v_1,v_2))=\max\limits_{t>0}\Psi_{(v_1,v_2)}^\theta(t)\].
			\item The functions $(v_1,v_2) \mapsto s_\theta(v_1,v_2)$ and $(v_1,v_2) \mapsto t_\theta(v_1,v_2)$ are of class $C^1$.
\end{enumerate}
\label{Lemma3.5变分结构}
	\end{lem}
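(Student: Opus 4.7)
The plan is to analyse the one-variable function $\Psi := \Psi_{(v_1,v_2)}^\theta$ and to derive every item from its critical-point structure, coupled with the uniform lower bound $\Psi(t) \geq h(t\sqrt{a})$. Set $a := |\nabla v_1|_2^2+|\nabla v_2|_2^2$, $b := \int_{\mathbb{R}^N}(I_\omega*|v_1|^p)|v_2|^q$, $c := \int_{\mathbb{R}^N}((I_\omega*|v_1|^{2_\omega^*})|v_1|^{2_\omega^*} + (I_\omega*|v_2|^{2_\omega^*})|v_2|^{2_\omega^*})$ and $d := \varepsilon\int_{\mathbb{R}^N} v_1 v_2$, so that $\Psi(t) = \tfrac{a}{2}t^2 - \theta b\, t^{\gamma_p+\gamma_q} - \tfrac{c}{22_\omega^*}t^{22_\omega^*} - d$; the promised lower bound is immediate from \eqref{GN}, \eqref{SH} and $|d| \leq \varepsilon\alpha_1\alpha_2$.

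To count the zeros of $\Psi'$, I divide $\Psi'(t)=0$ by $t^{\gamma_p+\gamma_q-1}$ to rewrite it as $G(t) := at^{2-(\gamma_p+\gamma_q)} - ct^{22_\omega^*-(\gamma_p+\gamma_q)} = \theta(\gamma_p+\gamma_q)b$. Since $0 < 2-(\gamma_p+\gamma_q) < 22_\omega^*-(\gamma_p+\gamma_q)$, $G$ admits a unique maximum $G(t_0)$ with $G(0)=0$ and $G\to-\infty$. An explicit evaluation of $G(t_0)$, together with $b \leq C(N,p,q)(\alpha_1^2+\alpha_2^2)^{(p+q-\gamma_p-\gamma_q)/2}a^{(\gamma_p+\gamma_q)/2}$ from \eqref{GN} and $c \leq S_H^{-2_\omega^*}a^{2_\omega^*}$ from \eqref{SH}, reduces the inequality $G(t_0) > \theta(\gamma_p+\gamma_q)b$ to $\theta < \theta_2$, which is guaranteed by $\theta < \theta_0 \leq \theta_2$. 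Thus $\Psi'$ has exactly two non-degenerate zeros $s_\theta < t_0 < t_\theta$. Since $\gamma_p+\gamma_q < 2$, a short inspection shows $\Psi'<0$ near $0$ and at $+\infty$, so $\Psi'<0$ on $(0,s_\theta)\cup(t_\theta,+\infty)$ and $\Psi'>0$ on $(s_\theta,t_\theta)$; in particular $\Psi''(s_\theta)>0$ and $\Psi''(t_\theta)<0$. The scaling identity $\Psi_{s\diamond(v_1,v_2)}^\theta(\tau) = \Psi_{(v_1,v_2)}^\theta(s\tau)$ gives $\Psi_{s\diamond(v_1,v_2)}''(1) = s^2\,\Psi_{(v_1,v_2)}''(s)$, which yields item (ii); the non-degeneracy $\Psi''(1)\neq 0$ at points of $\mathcal{P}_\theta(\alpha_1,\alpha_2)$ then gives item (i) by the implicit function theorem applied along the scaling direction tangent to $\mathcal{S}(\alpha_1,\alpha_2)$, and item (v) follows from the same theorem applied to $\Psi_{(v_1,v_2)}'(t)=0$ in the variable $t$.

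The main obstacle is item (iii), which requires the localization $s_\theta(v_1,v_2)\sqrt{a} \leq T_0$. Let $s_2$ denote the second critical point of $h$, where $h$ attains its maximum (which exceeds $\varepsilon\alpha_1\alpha_2$ by the standing hypotheses $\theta<\theta_0$, $\varepsilon<\varepsilon_*$). Evaluating the bound $\Psi(t)\geq h(t\sqrt{a})$ at $t = s_2/\sqrt{a}$ yields $\Psi(s_2/\sqrt{a}) \geq h(s_2) > \varepsilon\alpha_1\alpha_2 \geq -d = \Psi(0)$; since $\Psi$ is strictly decreasing on $(0,s_\theta)$, this forces $s_2/\sqrt{a} > s_\theta$, i.e.\ $s_\theta\sqrt{a} < s_2$. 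On the other hand, $s_\theta\sqrt{a}\in(T_0,T_1)$ would give $\Psi(s_\theta)\geq h(s_\theta\sqrt{a}) > \varepsilon\alpha_1\alpha_2$, contradicting $\Psi(s_\theta)<\Psi(0)\leq\varepsilon\alpha_1\alpha_2$. Since $T_0 < s_2 < T_1$, only $s_\theta\sqrt{a}\leq T_0$ remains. On $[0,s_\theta]$ the minimum of $\Psi$ is $\Psi(s_\theta)$ by monotonicity, and on $[s_\theta, T_0/\sqrt{a}]$ the only interior critical point of $\Psi$ is the local maximum $t_\theta$ (when it lies in that interval), so the minimum is attained at an endpoint; the inequality $\Psi(T_0/\sqrt{a})\geq h(T_0)=\varepsilon\alpha_1\alpha_2 > \Psi(s_\theta)$ identifies that endpoint as $s_\theta$, closing (iii). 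Finally, the same estimate $\Psi(s_2/\sqrt{a}) > \Psi(0)$, combined with the shape of $\Psi$, gives $\Psi(t_\theta) \geq \Psi(s_2/\sqrt{a}) > \Psi(0)$, which proves $\Psi(t_\theta) = \max_{t>0}\Psi(t)$ and completes item (iv).
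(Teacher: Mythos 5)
Your proof is correct, and the core machinery coincides with the paper's: the barrier $h$, the thresholds $\theta_2$ and $T_0<T_1$, inequalities \eqref{GN} and \eqref{SH}, and the implicit function theorem for item (v). Two points of organization differ. First, you count the critical points of $\Psi_{(v_1,v_2)}^\theta$ directly by rewriting $(\Psi_{(v_1,v_2)}^\theta)'(t)=0$ as the level-set equation $G(t)=\theta(\gamma_p+\gamma_q)b$ for the unimodal auxiliary function $G(t)=at^{2-(\gamma_p+\gamma_q)}-ct^{22_\omega^*-(\gamma_p+\gamma_q)}$ and verifying $G(t_0)>\theta(\gamma_p+\gamma_q)b$ under $\theta<\theta_2$; the paper instead proves $\mathcal{P}_\theta^0(\alpha_1,\alpha_2)=\emptyset$ by contradiction (essentially the same computation run backwards) and then combines the bound \emph{at most two critical points} with the shape of $\Psi_{(v_1,v_2)}^\theta$ forced by $h$ to get \emph{exactly two}. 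Your version is slightly more economical, since existence, uniqueness and nondegeneracy of the two critical points come in one stroke and $\mathcal{P}_\theta^0(\alpha_1,\alpha_2)=\emptyset$ drops out as a corollary. Second, for the submanifold property you differentiate $P_\theta$ along the scaling direction, which is tangent to $\mathcal{S}(\alpha_1,\alpha_2)$ and on which $dP_\theta$ acts as $(\Psi_{(v_1,v_2)}^\theta)''(1)\neq 0$, giving surjectivity of $d(P_\theta,G_1,G_2)$ directly; the paper instead supposes $dP_\theta$ is a linear combination of $dG_1,dG_2$, writes down the resulting Euler--Lagrange system and uses its Pohozaev identity to land in $\mathcal{P}_\theta^0(\alpha_1,\alpha_2)$. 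Both are standard; yours avoids introducing the auxiliary elliptic system, while the paper's makes the Lagrange-multiplier structure explicit. The remaining steps --- the localization $s_\theta(v_1,v_2)\,(|\nabla v_1|_2^2+|\nabla v_2|_2^2)^{1/2}\leq T_0$ via $h(T_0)=\varepsilon\alpha_1\alpha_2$ together with $\Psi_{(v_1,v_2)}^\theta(s_\theta)<\lim_{t\to0^+}\Psi_{(v_1,v_2)}^\theta(t)\leq\varepsilon\alpha_1\alpha_2$, and the min/max characterizations in (iii)--(iv) --- match the paper's argument.
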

	\begin{proof}
		$(\romannumeral1)$ To derive a contradiction, we assume that $(v_1,v_2) \in \mathcal{P}_\theta^0(\alpha_1,\alpha_2)$. Then 
		\begin{align*}
			(\Psi_{(v_1,v_2)}^\theta)'(1)=&\int_{\mathbb{R}^N}(|\nabla v_1|^2+|\nabla v_2|^2)- \int_{\mathbb{R}^N}\left((I_\omega*|v_1|^{2_\omega^*})|v_1|^{2_\omega^*}+(I_\omega*|v_2|^{2_\omega^*})|v_2|^{2_\omega^*}\right)\\
			&-\theta(\gamma_p+\gamma_q)\int_{\mathbb{R}^N}(I_\omega*|v_1|^p)|v_2|^q=0,
		\end{align*}
		\begin{align*}
			(\Psi_{(v_1,v_2)}^\theta)''(1)=&2\int_{\mathbb{R}^N}(|\nabla v_1|^2+|\nabla v_2|^2)- 2_\omega^*\int_{\mathbb{R}^N}\left((I_\omega*|v_1|^{2_\omega^*})|v_1|^{2_\omega^*}+(I_\omega*|v_2|^{2_\omega^*})|v_2|^{2_\omega^*}\right)\\
			&-\theta(\gamma_p+\gamma_q)^2 \int_{\mathbb{R}^N}(I_\omega*|v_1|^p)|v_2|^q=0.
		\end{align*}
		
		\par By \eqref{SH}, we have
		\begin{align*}
			|\nabla v_1|^2_2+|\nabla v_2|^2_2 &=\frac{22_\omega^*-\gamma_p-\gamma_q}{2-\gamma_p-\gamma_q}\int_{\mathbb{R}^N}\left((I_\omega*|v_1|^{2_\omega^*})|v_1|^{2_\omega^*}+(I_\omega*|v_2|^{2_\omega^*})|v_2|^{2_\omega^*}\right)\\
			&\leq \frac{22_\omega^*-\gamma_p-\gamma_q}{2-\gamma_p-\gamma_q}S^{-2_\omega^*}(|\nabla v_1|_2^{22_\omega^*}+|\nabla v_2|_2^{22_\omega^*})\\
			&\leq \frac{22_\omega^*-\gamma_p-\gamma_q}{2-\gamma_p-\gamma_q}S^{-2_\omega^*}(|\nabla v_1|_2^2+|\nabla v_2|_2^2)^{2_\omega^*}.
		\end{align*}
		Alternatively, one obtains from \eqref{GN} that
		{\small
		\begin{align*}
				|\nabla v_1|^2_2+|\nabla v_2|^2_2 &= \frac{22_\omega^*-\gamma_p-\gamma_q}{2(2_\omega^*-1)}\theta(\gamma_p+\gamma_q)\int_{\mathbb{R}^N}(I_\omega*|v_1|^p)|v_2|^q\\
				&\leq\frac{22_\omega^*-\gamma_p-\gamma_q}{2(2_\omega^*-1)}\theta(\gamma_p+\gamma_q)C(N,p,q)(\alpha_1^2+\alpha_2^2)^{\frac{p+q-\gamma_p-\gamma_q}{2}} (|\nabla v_1|_2^2+|\nabla v_2|_2^2)^{\frac{\gamma_p+\gamma_q}{2}},
		\end{align*}
	    }
		which is in contradiction with $\theta<\theta_0 \leq \theta_2$.
		
		\par We now proceed to show that $\mathcal{P}_\theta(\alpha_1,\alpha_2)$ lies in $H$ as a submanifold with codimension 3. The argument resembles the one used in \cite[Lemma 5.3]{soave2020normalized}. The details are provided below for better understanding. Note that
		\[\mathcal{P}_\theta(\alpha_1,\alpha_2)=\{(v_1,v_2)\in H: P_\theta(v_1,v_2),\, G_1(v_1)=0,\, G_2(v_2)=0\},
		\]
		where $G_1(v_1)=\alpha_1^2-|v_1|_2^2$,  $G_2(v_2)=\alpha_2^2-|v_2|_2^2$. 
		
		\par Therefore, we need to prove that $d(P_\theta,G_1,G_2): H \to \mathbb{R}^3$ is surjective, for any $(v_1,v_2) \in \mathcal{P}_\theta(\alpha_1,\alpha_2)$. If not, since $dG_1(v_1)$ and $dG_2(v_2)$ are independent, it holds that $dP_\theta(v_1,v_2)$ can be expressed as a linear combination of $dG_1(v_1)$, $dG_2(v_2)$.  There exists $\mu_1, \mu_2 \in \mathbb{R}$ such that $(v_1,v_2)$ solves
		\begin{equation*}
		\left\{
		\begin{aligned}
			-\Delta v_1 + \mu_1 v_1 &=2_\omega^* ( I_\omega * |v_1|^{2_\omega^*} ) |v_1|^{2_\omega^* -2} v_1 + \theta p\frac{\gamma_p+\gamma_q}{2}( I_\omega * |v_2|^q)|v_1|^{p-2} v_1 ,\\
			-\Delta v_2 + \mu_2 v_2 &=2_\omega^* ( I_\omega * |v_2|^{2_\omega^*} ) |v_2|^{2_\omega^* -2} v_2 + \theta q\frac{\gamma_p+\gamma_q}{2}( I_\omega * |v_1|^p)|v_2|^{q-2} v_2, \\
			\int_{\mathbb{R}^N} v_1^2 = \alpha_1^2 \, &,  \int_{\mathbb{R}^N} v_2^2 = \alpha_2^2.
		\end{aligned}
		\right.
		\end{equation*}
		
		\par By the Pohozaev identity, we have
		\begin{align*}
			|\nabla v_1|^2_2+|\nabla v_2|^2_2-&2_\omega^* \int_{\mathbb{R}^N}\left((I_\omega*|v_1|^{2_\omega^*})|v_1|^{2_\omega^*}+(I_\omega*|v_2|^{2_\omega^*})|v_2|^{2_\omega^*}\right)\\
			-&\theta\frac{(\gamma_p+\gamma_q)^2}{2}\int_{\mathbb{R}^N}(I_\omega*|v_1|^p)|v_2|^q=0.
		\end{align*}
	
		\par Thus, $(\Psi_{(v_1,v_2)}^\theta)''(1)=0$, $(v_1,v_2) \in \mathcal{P}_\theta^0(\alpha_1,\alpha_2)$, which is in contradiction with\\ $\mathcal{P}_\theta^0(\alpha_1,\alpha_2)=\emptyset$.\\
		$(\romannumeral2)$-$(\romannumeral4)$ Let $(v_1,v_2) \in \mathcal{S}(\alpha_1,\alpha_2)$, $\Psi_{(v_1,v_2)}^\theta(t)=\mathcal{J}_\theta(t\diamond(v_1,v_2)) \geq h \left( (\int_{\mathbb{R}^N}|\nabla v_1|^2+|\nabla v_2|^2)^{\frac{1}{2}} \right)$.\\
		Then for any $t \in (\frac{T_0}{ (\int_{\mathbb{R}^N}|\nabla v_1|^2+|\nabla v_2|^2)^{\frac{1}{2}}}, \frac{T_1}{ (\int_{\mathbb{R}^N}|\nabla v_1|^2+|\nabla v_2|^2)^{\frac{1}{2}}})$, $\Psi_{(v_1,v_2)}^\theta(t)> \varepsilon\alpha_1\alpha_2$.
		
		\par Note that $\lim\limits_{t\to 0^+}\Psi_{(v_1,v_2)}^\theta(t)=-\varepsilon\int_{\mathbb{R}^N}v_1v_2 \leq \varepsilon\alpha_1\alpha_2$,  $\lim\limits_{t\to +\infty}\Psi_{(v_1,v_2)}^\theta(t)=-\infty$, $\Psi_{(v_1,v_2)}^\theta(t)$ is decreasing as $0<t\ll 1$ and $(\Psi_{(v_1,v_2)}^\theta)'(t)=0$ has at most two solutions. Therefore, $\Psi_{(v_1,v_2)}^\theta(t)$ has exactly two critical points $s_\theta(v_1,v_2)<t_\theta(v_1,v_2)$ satisfying
		\[\Psi_{(v_1,v_2)}^\theta(s_\theta(v_1,v_2))=\min\{\Psi_{(v_1,v_2)}^\theta(t):t(\int_{\mathbb{R}^N}|\nabla v_1|^2+|\nabla v_2|^2)^{\frac{1}{2}}\leq T_0\} \]
		 and  $\Psi_{(v_1,v_2)}^\theta(t_\theta(v_1,v_2))=\max\limits_{t>0}\Psi_{(v_1,v_2)}^\theta(t)$.\\
		$(\romannumeral5)$ Denote $\Phi(t,(v_1,v_2))=(\Psi_{(v_1,v_2)}^\theta)'(t)$, then $\Phi(s_\theta(v_1,v_2),(v_1,v_2))=0$ and
		\[\partial_t \Phi(t,(v_1,v_2))|_{t=s_\theta(v_1,v_2)}=(\Psi_{(v_1,v_2)}^\theta)''(s_\theta(v_1,v_2))>0.\]
		Thus, the map $(v_1,v_2) \mapsto s_\theta(v_1,v_2)$ has $C^1$-regularity due to the implicit function theorem.
	\end{proof}
	For $T>0$, we define
	\begin{equation}
		A_T(\alpha_1,\alpha_2):=\{(v_1,v_2) \in \mathcal{S}(\alpha_1,\alpha_2):|\nabla v_1|_2^2+|\nabla v_2|_2^2<T^2\}.
	\end{equation}
	
	The following lemma establishes the properties of $m_\theta(\alpha_1,\alpha_2)$ and its monotonicity.
	\begin{lem}
		Let $0<\theta<\theta_0$, $0<\varepsilon<\varepsilon_*$, then
		\begin{enumerate}[label=(\roman*)]
			\item $m_\theta(\alpha_1,\alpha_2)=\inf\limits_{A_{T_0}(\alpha_1,\alpha_2)}\mathcal{J}_\theta=\inf\limits_{\mathcal{P}_\theta^+(\alpha_1,\alpha_2)}\mathcal{J}_\theta<-\varepsilon\alpha_1\alpha_2$. Moreover, there exists $\delta>0$ satisfying
			\[m_\theta(\alpha_1,\alpha_2)<\inf\limits_{A_{T_0}(\alpha_1,\alpha_2) \setminus A_{T_0-\delta}(\alpha_1,\alpha_2)}\mathcal{J}_\theta.\]
			\item $m_\theta(\alpha_1,\alpha_2) \leq m_\theta(\alpha_1',\alpha_2')$, for every $0<\alpha_1'\leq \alpha_1$, $0<\alpha_2' \leq \alpha_2$.
		\end{enumerate}
		\label{Lemma3.6单调性}
	\end{lem}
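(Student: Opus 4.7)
My plan for part~(i) begins with the pointwise lower bound $\mathcal{J}_\theta(v_1,v_2)\geq h(s)$ on $\mathcal{S}(\alpha_1,\alpha_2)$, where $s:=(|\nabla v_1|_2^2+|\nabla v_2|_2^2)^{1/2}$; this is immediate from \eqref{GN}, \eqref{SH} and the Cauchy--Schwarz bound $|\int v_1v_2|\leq \alpha_1\alpha_2$. Under the assumptions on $\theta$ and $\varepsilon$, the function $h$ has a strict local minimum at $s_1$ with $h(s_1)<0$, a strict local maximum at $s_2>s_1$ with $h(s_2)>\varepsilon\alpha_1\alpha_2$, and crossings $s_1<T_0<s_2<T_1$ of the level $\varepsilon\alpha_1\alpha_2$, with $h$ strictly monotone on the subinterval containing $T_0$. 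To identify the infima: if $(u_1,u_2)\in \mathcal{P}_\theta^+$, then Lemma~\ref{Lemma3.5变分结构}(ii) gives $s_\theta(u_1,u_2)=1$, and Lemma~\ref{Lemma3.5变分结构}(iii) combined with the strict inequality $\Psi^\theta_{(u_1,u_2)}(1)<\varepsilon\alpha_1\alpha_2\leq h(T_0)\leq \Psi^\theta_{(u_1,u_2)}(T_0/s_u)$ forces $s_u<T_0$, so $\mathcal{P}_\theta^+\subset A_{T_0}$. Conversely, for $(v_1,v_2)\in A_{T_0}$, the projection $s_\theta(v_1,v_2)\diamond(v_1,v_2)\in\mathcal{P}_\theta^+$ has energy at most $\mathcal{J}_\theta(v_1,v_2)$. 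Hence $\inf_{\mathcal{P}_\theta^+}\mathcal{J}_\theta=\inf_{A_{T_0}}\mathcal{J}_\theta$.

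The strict bound $m_\theta<-\varepsilon\alpha_1\alpha_2$ is obtained by testing with $v_i=\alpha_i\phi/|\phi|_2$ for some radial positive $\phi\in C_c^\infty(\mathbb{R}^N)$: then $\int v_1v_2=\alpha_1\alpha_2$ and, since $\gamma_p+\gamma_q<2$ in the $L^2$-subcritical regime, the expansion $\Psi^\theta_{(v_1,v_2)}(t)=-\varepsilon\alpha_1\alpha_2-\theta t^{\gamma_p+\gamma_q}\int(I_\omega*|v_1|^p)|v_2|^q+O(t^2)$ shows that $\Psi^\theta_{(v_1,v_2)}(s_\theta)<-\varepsilon\alpha_1\alpha_2$, yielding an element of $\mathcal{P}_\theta^+$ with the desired energy. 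The branch $\mathcal{P}_\theta^-$ does not attain the infimum because, by Lemma~\ref{Lemma3.5变分结构}(iv), any $(v_1,v_2)\in \mathcal{P}_\theta^-$ satisfies $\mathcal{J}_\theta(v_1,v_2)=\max_t\Psi^\theta(t)\geq \Psi^\theta(T_0/s_v)\geq h(T_0)=\varepsilon\alpha_1\alpha_2>m_\theta$, so $m_\theta=\inf_{\mathcal{P}_\theta^+}$. For the ``moreover'' clause, I would pick $\delta>0$ with $T_0-\delta>s_1$ and $h(T_0-\delta)>m_\theta$ (possible by continuity, since $h(T_0)=\varepsilon\alpha_1\alpha_2>m_\theta$); strict monotonicity of $h$ on $[s_1,s_2]$ then forces $\inf_{A_{T_0}\setminus A_{T_0-\delta}}\mathcal{J}_\theta\geq h(T_0-\delta)>m_\theta$.

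For part~(ii) I would use a ``radial mass at infinity'' construction. Given $\eta>0$, pick $(v_1',v_2')\in \mathcal{P}_\theta^+(\alpha_1',\alpha_2')$ with $\mathcal{J}_\theta(v_1',v_2')<m_\theta(\alpha_1',\alpha_2')+\eta$, and fix radial nonnegative $\tilde\chi_i\in C_c^\infty(\mathbb{R}^N)$ supported in $\{1<|x|<2\}$ with $|\tilde\chi_i|_2^2=\alpha_i^2-\alpha_i'^2$ and $\int\tilde\chi_1\tilde\chi_2\geq 0$. Form the $L^2$-invariant dilations $\chi_i^t(x)=t^{N/2}\tilde\chi_i(tx)$, supported in $\{1/t<|x|<2/t\}$, and set $v_i=\beta_i(v_i'+\chi_i^t)\in \mathcal{S}(\alpha_1,\alpha_2)$ with $\beta_i\to 1$ correcting the mass exactly. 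As $t\to 0$, $\chi_i^t$ concentrates near infinity where $v_i'$ decays via the Strauss radial embedding, so all cross terms (gradient, critical Choquard, nonlinear and linear coupling) between $v_i'$ and $\chi_j^t$ vanish in the limit by the Hardy--Littlewood--Sobolev inequality and standard estimates; meanwhile the self-energy of the bumps tends to $-\varepsilon\int\tilde\chi_1\tilde\chi_2\leq 0$ because the gradient, critical Choquard and nonlinear coupling self-terms scale by strictly positive powers of $t$ while $\int\chi_1^t\chi_2^t=\int\tilde\chi_1\tilde\chi_2$ is scale invariant. Hence $\mathcal{J}_\theta(v_1,v_2)\leq \mathcal{J}_\theta(v_1',v_2')+o_t(1)$. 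Applying the projection $s_\theta\diamond$ then produces an element of $\mathcal{P}_\theta^+(\alpha_1,\alpha_2)$ with no greater energy; sending $t\to 0$ and $\eta\to 0$ delivers $m_\theta(\alpha_1,\alpha_2)\leq m_\theta(\alpha_1',\alpha_2')$.

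The main obstacle lies in part~(ii): I must verify that the perturbed pair $(v_1,v_2)$ lies in $A_{T_0(\alpha_1,\alpha_2)}$ so that the energy-decreasing projection applies, which requires tracking the dependence of $T_0$, $\theta_0$ and $\varepsilon_*$ on $(\alpha_1,\alpha_2)$ and inheriting the admissibility conditions for the smaller mass $(\alpha_1',\alpha_2')$. The inheritance is straightforward for $\theta_0$ (decreasing in $\alpha_1^2+\alpha_2^2$), but the control on the cross terms — especially the critical Choquard cross between $v_i'$ and $\chi_j^t$, which demands combining HLS with the decay at infinity — is where the technical care concentrates.
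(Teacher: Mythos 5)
Part~(i) of your proposal matches the paper's argument essentially step for step: the lower bound $\mathcal{J}_\theta\geq h$, the inclusion $\mathcal{P}_\theta^+(\alpha_1,\alpha_2)\subset A_{T_0}(\alpha_1,\alpha_2)$, the projection $s_\theta(v_1,v_2)\diamond(v_1,v_2)$ for the reverse inequality, the test pair $v_2=\alpha_2 v_1/|v_1|_2$ with the $t^{\gamma_p+\gamma_q}$ term dominating as $t\to 0^+$, and a continuity argument near $T_0$ for the annulus estimate (the paper uses $h\geq\tfrac12 m_\theta$ on $[T_0-\delta,T_0]$ rather than monotonicity of $h$, an immaterial difference). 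This part is correct.

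For part~(ii) your overall strategy (add the missing mass via bumps escaping to infinity, kill the cross terms, reproject) is the same as the paper's, but you have correctly identified the one point that your write-up does not close, and it is exactly the point the paper is careful about: the near-minimizer for $(\alpha_1',\alpha_2')$ must land, after perturbation, inside a gradient ball on which the infimum of $\mathcal{J}_\theta$ over $\mathcal{S}(\alpha_1,\alpha_2)$ is known to equal $m_\theta(\alpha_1,\alpha_2)$. Comparing $T_0(\alpha_1',\alpha_2')$ with $T_0(\alpha_1,\alpha_2)$ directly is awkward because both the function $h$ and the level $\varepsilon\alpha_1\alpha_2$ move with the masses. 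The paper sidesteps this entirely by replacing $T_0$ with the radius $s_*$ from \eqref{s_*}, which depends only on $N,p,q,\omega$ and not on $(\alpha_1,\alpha_2)$; since $h(s)\geq h(T_0)=\varepsilon\alpha_1\alpha_2>0>m_\theta$ on $[T_0,s_*]$, one has $m_\theta(\alpha_1,\alpha_2)=\inf_{A_{s_*}(\alpha_1,\alpha_2)}\mathcal{J}_\theta$ for every admissible mass pair, and the perturbed competitor only needs to stay in $A_{s_*}$, which is arranged by taking $t$ small. You should adopt this (or prove the monotonicity of $T_0$ in the masses, which is not done in the paper and is not needed). A secondary difference: the paper first truncates $(v_1',v_2')$ to compact support and places the two bumps in annuli disjoint from the truncated supports and from each other, so that all local cross terms (gradient, $L^2$, linear coupling) vanish identically and the nonlocal cross terms are nonnegative and carry a favorable sign in $\mathcal{J}_\theta$; your dilation-to-infinity construction without truncation forces you to estimate overlapping cross terms via Strauss decay, Brezis--Lieb expansions of $|v_i'+\chi_i^t|^{2_\omega^*}$, and the mass-correcting factors $\beta_i$. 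This is workable (the pointwise bound $|v_i'+\chi_i^t|\geq |v_i'|$ for nonnegative functions handles the signs), but it is strictly more technical than the paper's route and buys nothing in return.
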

	\begin{proof}
		$(\romannumeral1)$ By Lemma \ref{Lemma3.5变分结构}, we obtain $m_\theta(\alpha_1,\alpha_2)=\inf\limits_{\mathcal{P}_\theta^+(\alpha_1,\alpha_2)}\mathcal{J}_\theta$ and $\mathcal{P}_\theta^+(\alpha_1,\alpha_2) \subset A_{T_0}(\alpha_1,\alpha_2)$. Then we have
		 \[m_\theta(\alpha_1,\alpha_2) \geq \inf\limits_{A_{T_0}(\alpha_1,\alpha_2)}\mathcal{J}_\theta.\]
		  
		  \par For $(v_1,v_2) \in A_{T_0}(\alpha_1,\alpha_2)$, since $m_\theta(\alpha_1,\alpha_2)\leq \mathcal{J}_\theta(s_\theta(v_1,v_2)\diamond(v_1,v_2)) \leq \mathcal{J}_\theta(v_1,v_2)$, we get
		  \[m_\theta(\alpha_1,\alpha_2) \leq \inf\limits_{A_{T_0}(\alpha_1,\alpha_2)}\mathcal{J}_\theta.\]
		  
		  \par The next step is to show $m_\theta(\alpha_1,\alpha_2)<-\varepsilon\alpha_1\alpha_2$. Let $v_1>0$ and $|v_1|_2=\alpha_1$, $v_2:=\frac{v_1}{|v_1|_2}\alpha_2$, we can find $0<t\ll 1$ satisfying $t\diamond(v_1,v_2) \in A_{T_0}(\alpha_1,\alpha_2)$. Thus, 
		  {\small
		  \begin{align*}
		  	m_\theta(\alpha_1,\alpha_2)\leq& \mathcal{J}_\theta(t\diamond(v_1,v_2))=\frac{t^2}{2}(|\nabla v_1|^2_2+\frac{\alpha_2^2}{\alpha_1^2}|\nabla v_1|^2_2)-\frac{t^{22_\omega^*}}{22_\omega^*}(1+\frac{\alpha_2^{22_\omega^*}}{\alpha_1^{22_\omega^*}})\int_{\mathbb{R}^N}(I_\omega*|v_1|^{2_\omega^*})|v_1|^{2_\omega^*}\\
		  	&-\theta t^{\gamma_p+\gamma_q}(\frac{\alpha_2}{\alpha_1})^q\int_{\mathbb{R}^N}(I_\omega*|v_1|^p)|v_2|^q-\varepsilon\alpha_1\alpha_2<-\varepsilon\alpha_1\alpha_2, \qquad \text{for } t>0 \text{ small enough}.
		  \end{align*}
		  }
		  
		  \par Finally, notice that $h(T_0)=\varepsilon\alpha_1\alpha_2>0$. We can find $\delta>0$ satisfying $h(s)\geq \frac{1}{2}m_\theta(\alpha_1,\alpha_2)$, for each $s\in [T_0-\delta,T_0]$. Then for $(v_1,v_2) \in A_{T_0}(\alpha_1,\alpha_2) \setminus A_{T_0-\delta}(\alpha_1,\alpha_2)$, we have \[\mathcal{J}_\theta(v_1,v_2)\geq h\left( (|\nabla v_1|_2^2+|\nabla v_2|_2^2)^{\frac{1}{2}} \right)\geq \frac{1}{2}m_\theta(\alpha_1,\alpha_2)>m_\theta(\alpha_1,\alpha_2).\]
		  $(\romannumeral2)$The argument resembles the one used in \cite[Lemma 2.3]{bartsch2023existence}, with slight modifications in our case. 
		  Recall $s_*$ defined in \eqref{s_*}. Similarly to $(\romannumeral1)$, we have
		  $m_\theta(\alpha_1,\alpha_2) = \inf\limits_{A_{s_*}(\alpha_1,\alpha_2)}\mathcal{J}_\theta$.\\
		  Then for every $\epsilon >0$, one can find $(v_1,v_2)\in A_{s_*}(\alpha_1',\alpha_2')$ satisfying
		  \begin{equation}
		  	\mathcal{J}_\theta(v_1,v_2)\leq m_\theta(\alpha_1',\alpha_2')+\frac{\epsilon}{3}.
		  	\label{3.11}
		  \end{equation}
		  
		  \par We choose a radial bump function $\varphi\in C_0^\infty(\mathbb{R}^N)$, taking values in $[0,1]$, such that $\varphi|_{B(0,1)}\equiv 1$, $\supp \varphi \subset B(0,2)$, where
		   $B(0,R):=\{x \in \mathbb{R}^N:|x|\leq R\}$. 
		   
		   \par For $\delta>0$, let $v_{1,\delta}:=v_1(x)\varphi(\delta x)$, $v_{2,\delta}:=v_2(x)\varphi(\delta x)$, thus $(v_{1,\delta},v_{2,\delta})\to (v_1,v_2)$ in $H$ as $\delta\to 0^+$. Given $0<\eta \ll s_*- (|\nabla v_1|_2^2+|\nabla v_2|_2^2)^{\frac{1}{2}}$, one can find $\delta>0$ small satisfying
		   \begin{equation}
		   	\mathcal{J}_\theta(v_{1,\delta},v_{2,\delta}) \leq \mathcal{J}_\theta (v_1,v_2) + \frac{\epsilon}{3} \; \text{and}\; (|\nabla v_{1,\delta}|_2^2+|\nabla v_{2,\delta}|_2^2)^{\frac{1}{2}}<s_*-\eta.
		   	\label{3.12}
		   \end{equation}
		   
		   \par Let $\psi_1,\psi_2 \in C_0^\infty(\mathbb{R}^N)$ and $\supp \psi_1 \subset B(0,1+\frac{4}{\delta}) \setminus B(0,\frac{4}{\delta})$, $\supp \psi_2 \subset B(0,2+\frac{4}{\delta}) \setminus B(0,1+\frac{4}{\delta})$. 
		   Define \[\phi_{\alpha_1}=\frac{\sqrt{\alpha_1^2-|v_{1,\delta}|_2^2}}{|\psi_1|_2}\psi_1 \;\text{and}\; \phi_{\alpha_2}=\frac{\sqrt{\alpha_2^2-|v_{2,\delta}|_2^2}}{|\psi_2|_2}\psi_2.\]
		   Thus, we have
		   \[ \left( \supp(t\diamond \phi_{\alpha_1}) \cup \supp(t\diamond \phi_{\alpha_2}) \right)\cap (\supp v_{1,\delta} \cup \supp v_{2,\delta} )=\emptyset, \qquad \text{for} \;\; 0<t<1.
		   \]
		   \[\supp(t\diamond \phi_{\alpha_1}) \cap \supp(t\diamond \phi_{\alpha_2})=\emptyset.
		   \]
		   
		   \par Hence $(v_{1,\delta}+t\diamond \phi_{\alpha_1},v_{2,\delta}+t\diamond \phi_{\alpha_2})\in \mathcal{S}(\alpha_1,\alpha_2)$. 
		   Notice that
		   $\mathcal{J}_\theta(t\diamond(\phi_{\alpha_1},\phi_{\alpha_2})) \to 0 $, $(|\nabla (t\diamond \phi_{\alpha_1})|_2^2+|\nabla (t\diamond \phi_{\alpha_2})|_2^2)^{\frac{1}{2}} \to 0$ as $t \to 0^+$. 
		   One can find $0<t\ll 1$ satisfying
		   \begin{equation}
		   	\mathcal{J}_\theta(t\diamond(\phi_{\alpha_1},\phi_{\alpha_2})) \leq \frac{\epsilon}{3} \;\; \text{and} \;\;
		   	(|\nabla (v_{1,\delta}+t\diamond \phi_{\alpha_1})|_2^2+|\nabla (v_{2,\delta}+t\diamond \phi_{\alpha_2})|_2^2)^{\frac{1}{2}} <s_*.
		   \end{equation}
		   
		   \par Therefore, by \eqref{3.11} and \eqref{3.12}, we have
		   {\small
		   \begin{align*}
		   	m_\theta(\alpha_1,\alpha_2)& \leq \mathcal{J}_\theta(v_{1,\delta}+t\diamond \phi_{\alpha_1},v_{2,\delta}+t\diamond \phi_{\alpha_2}) \\
		   	&=\mathcal{J}_\theta(v_{1,\delta},v_{2,\delta})+\mathcal{J}_\theta(t\diamond\phi_{\alpha_1},t\diamond\phi_{\alpha_2})-\frac{1}{2_\omega^*}t^{2_\omega^*}\int_{\mathbb{R}^N}(I_\omega*|v_{1,\delta}|^{2_\omega^*})|\phi_{\alpha_1}|^{2_\omega^*}\\
		   	&-\frac{1}{2_\omega^*}t^{2_\omega^*}\int_{\mathbb{R}^N}(I_\omega*|v_{2,\delta}|^{2_\omega^*})|\phi_{\alpha_2}|^{2_\omega^*}-\theta t^p\int_{\mathbb{R}^N}(I_\omega*|\phi_{\alpha_1}|^p)|v_{2,\delta}|^q-\theta t^q\int_{\mathbb{R}^N}(I_\omega*|v_{1,\delta}|^p)|\phi_{\alpha_2}|^q\\
		   	&\leq\mathcal{J}_\theta(v_{1,\delta},v_{2,\delta})+\mathcal{J}_\theta(t\diamond\phi_{\alpha_1},t\diamond\phi_{\alpha_2})\\
		   	&\leq m_\theta(\alpha_1',\alpha_2')+\epsilon.
		   \end{align*}
		   }
		   Since $\epsilon>0$ is arbitrary, it follows that $m_\theta(\alpha_1,\alpha_2) \leq m_\theta(\alpha_1',\alpha_2')$.
	\end{proof}
	
	Next, we establish the compactness result.
	\begin{lem}
		Assume $N\in \{3,4\}$, if  $\{(v_{1,n},v_{2,n})\}\subset \mathcal{S}(\alpha_1,\alpha_2)$ satisfies that as $n\to \infty$
		\begin{align}
			\mathcal{J}_\theta'(v_{1,n},v_{2,n})+\mu_{1,n}&v_{1,n}+\mu_{2,n}v_{2,n}\to 0,  \qquad \text{for some}\; \mu_{1,n},\mu_{2,n}\in \mathbb{R}, \label{3.6}\\
			\mathcal{J}_\theta(v_{1,n},v_{2,n})&\to m_\theta(\alpha_1,\alpha_2), \qquad 	\mathcal{P}_\theta(v_{1,n},v_{2,n})\to 0, \label{3.7}\\
			&v_{1,n}^-, v_{2,n}^- \to 0 ~~ \text{a.e in}\; \mathbb{R}^N, \label{3.8}
		\end{align}
		with
		\begin{equation}
			c:=m_\theta(\alpha_1,\alpha_2)<-\varepsilon\alpha_1\alpha_2. \label{3.9}
		\end{equation}
		Then there exists $(v_1,v_2)\in H$, $v_1,v_2>0$ and $\mu_1,\mu_2>0$ satisfying that, for a subsequence,
		$(v_{1,n},v_{2,n})\to (v_1,v_2)\; \text{in}\; H$ and $(\mu_{1,n},\mu_{2,n}) \to (\mu_1,\mu_2)\; \text{in} \; \mathbb{R}^2$.
		\label{Lemma3.7紧性}
	\end{lem}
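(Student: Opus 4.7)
The plan is to follow a standard splitting--compactness scheme for subcritical normalized problems: combine the asymptotic Pohozaev condition with the energy bound to get a priori control, extract a weak limit and pass to the limit in the equation, use a Brezis--Lieb decomposition together with the strict level inequality $c < -\varepsilon\alpha_1\alpha_2$ to exclude vanishing, and finally upgrade to strong convergence via the $S_H$-concentration dichotomy inherited from the critical Choquard term.

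For the first step, subtracting $\frac{1}{22_\omega^*}P_\theta$ from $\mathcal{J}_\theta$ eliminates the critical nonlocal term, giving
\begin{equation*}
c + o(1) = \tfrac{2_\omega^*-1}{22_\omega^*}\bigl(|\nabla v_{1,n}|_2^2 + |\nabla v_{2,n}|_2^2\bigr) + \theta\bigl(\tfrac{\gamma_p+\gamma_q}{22_\omega^*} - 1\bigr)\!\!\int_{\mathbb{R}^N}\!\!(I_\omega*|v_{1,n}|^p)|v_{2,n}|^q - \varepsilon\!\!\int_{\mathbb{R}^N}\!v_{1,n}v_{2,n}.
\end{equation*}
In the $L^2$-subcritical regime $\gamma_p + \gamma_q < 2$, the coupling integral is sub-quadratic in the gradient via \eqref{GN} and the linear term is bounded by $\varepsilon\alpha_1\alpha_2$, so $(v_{1,n},v_{2,n})$ is bounded in $H$. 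Testing \eqref{3.6} against $v_{i,n}$ further bounds $\mu_{i,n}$. Up to a subsequence, $(v_{1,n},v_{2,n}) \rightharpoonup (v_1,v_2)$ in $H$, strongly in $L^q(\mathbb{R}^N)$ for $q \in (2,2^*)$ by radial compactness, $\mu_{i,n} \to \mu_i$, and $v_i \geq 0$ by \eqref{3.8}. Combining strong $L^q$ convergence with \eqref{HLS} passes the subcritical coupling term to the limit, so $(v_1,v_2,\mu_1,\mu_2)$ solves the limit system, and Lemma \ref{smooth} with the Pohozaev identity yields $P_\theta(v_1,v_2)=0$.

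For nontriviality, set $w_{i,n} := v_{i,n} - v_i$. The Brezis--Lieb lemma in $L^2$ and $\dot H^1$, together with its nonlocal analogue for the critical Choquard term, yields
\begin{align*}
\mathcal{J}_\theta(v_{1,n},v_{2,n}) &= \mathcal{J}_\theta(v_1,v_2) + \widehat{\mathcal{J}}(w_{1,n},w_{2,n}) + o(1),\\
P_\theta(v_{1,n},v_{2,n}) &= P_\theta(v_1,v_2) + \widehat{P}(w_{1,n},w_{2,n}) + o(1),
\end{align*}
where $\widehat{\mathcal{J}},\widehat{P}$ keep only the gradient, critical nonlocal, and linear-coupling contributions (the subcritical coupling integrals drop by strong $L^q$ convergence). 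From $\widehat{P}(w_n)\to 0$ and \eqref{SH}, the quantity $A_n := |\nabla w_{1,n}|_2^2 + |\nabla w_{2,n}|_2^2$ obeys the dichotomy $A_n \to 0$ or $\liminf A_n \geq S_H^{2_\omega^*/(2_\omega^*-1)}$. Assuming $(v_1,v_2)=(0,0)$ yields $c + o(1) \geq \tfrac{2_\omega^*-1}{22_\omega^*}A_n - \varepsilon\alpha_1\alpha_2$, and both branches combined with the choice of $\varepsilon_*$ in \eqref{s_*} force $c \geq -\varepsilon\alpha_1\alpha_2$, contradicting \eqref{3.9}. The partial-vanishing case $v_1 \equiv 0$, $v_2 \not\equiv 0$ (and its symmetric counterpart) is immediately ruled out by the first (resp.\ second) equation of the limit system, which would reduce to $\varepsilon v_2 = 0$.

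Finally, the strong maximum principle applied to the linear elliptic equation for $v_i \geq 0$ gives $v_i > 0$; if $\mu_i \leq 0$, then $-\Delta v_i \geq 0$, and the interpolation $v_i \in L^2 \cap L^{2^*} \subset L^{N/(N-2)}$ combined with Lemma \ref{Liouville} forces $v_i \equiv 0$, a contradiction, so $\mu_1,\mu_2 > 0$. Lemma \ref{Lemma3.6单调性}(ii) then gives $\mathcal{J}_\theta(v_1,v_2) \geq m_\theta(|v_1|_2,|v_2|_2) \geq c$, so $\limsup \widehat{\mathcal{J}}(w_n) \leq 0$; the $S_H$-dichotomy with $\varepsilon < \varepsilon_*$ excludes concentration, giving $|\nabla w_n|_2 \to 0$. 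Subtracting the equations tested against $v_{i,n}$ and $v_i$ respectively and using the vanishing of all nonlocal and coupling differences leaves an identity that, together with $\mu_i > 0$, forces $|v_i|_2 = \alpha_i$; matching norms with weak $L^2$ convergence upgrades to strong. The main obstacle is precisely the interplay of the critical Choquard term and the linear coupling $\varepsilon v_1 v_2$: the former permits concentration at the $S_H$-level and the latter has no strong-compactness gain in $L^2$, so the careful tuning of $\varepsilon_*$ in \eqref{s_*} is what simultaneously rules out total vanishing, residual concentration, and mass loss.
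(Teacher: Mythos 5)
Your overall architecture matches the paper's: subtracting $\frac{1}{2\cdot 2_\omega^*}P_\theta$ from $\mathcal{J}_\theta$ to get boundedness, excluding vanishing via the limit system (the first equation degenerating to $\varepsilon v_2=0$) together with the $S_H$-dichotomy, deducing $\mu_1,\mu_2>0$ from the Liouville lemma, and combining the monotonicity of $m_\theta$ with $\varepsilon<\varepsilon_*$ to force $|\nabla(v_{1,n}-v_1)|_2^2+|\nabla(v_{2,n}-v_2)|_2^2\to 0$. All of that is sound and essentially identical to the paper.

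The gap is in the very last step, which is in fact the crux of the lemma. You assert that ``the vanishing of all nonlocal and coupling differences'' reduces the tested identity to $\mu_1(\alpha_1^2-|v_1|_2^2)+\mu_2(\alpha_2^2-|v_2|_2^2)=0$, whence $|v_i|_2=\alpha_i$. The subcritical nonlocal coupling and the critical Choquard differences do vanish, but the linear-coupling difference $\int_{\mathbb{R}^N}(v_{1,n}-v_1)(v_{2,n}-v_2)$ does not: at this stage you only know $v_{i,n}-v_i\rightharpoonup 0$ in $L^2$ and $\nabla(v_{i,n}-v_i)\to 0$ in $L^2$, which does not give strong $L^2$ convergence (consider $n^{-N/2}\phi(\cdot/n)$, whose $L^2$ norm is conserved while its gradient vanishes), so this $L^2$-pairing need not go to zero. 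The correct identity is
\begin{equation*}
\mu_1(\alpha_1^2-\overline{\alpha}_1^2)+\mu_2(\alpha_2^2-\overline{\alpha}_2^2)=2\varepsilon \lim_{n\to+\infty}\int_{\mathbb{R}^N}(v_{1,n}-v_1)(v_{2,n}-v_2),
\end{equation*}
and the paper closes it with a case analysis your proposal is missing: if exactly one mass drops, Cauchy--Schwarz makes the right-hand side zero, contradicting $\mu_i>0$; if both drop, Cauchy--Schwarz together with AM--GM forces $\varepsilon\ge\sqrt{\mu_1\mu_2}$, and then the pair $(\sqrt{\mu_2}v_1,\sqrt{\mu_1}v_2)$ satisfies $-\Delta(\sqrt{\mu_2}v_1+\sqrt{\mu_1}v_2)\ge 0$, so Lemma \ref{Liouville} forces $(v_1,v_2)\equiv(0,0)$, contradicting the nontriviality already established. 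Your closing remark that ``the careful tuning of $\varepsilon_*$ \dots rules out \dots mass loss'' is also not accurate: $\varepsilon_*$ controls the vanishing and concentration alternatives, but mass loss is excluded by this separate Cauchy--Schwarz/Liouville mechanism, which does not appear in your argument.
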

	\begin{proof}
		The argument is structured in three stages.

			\textbf{Step 1:} The sequences $\{(v_{1,n},v_{2,n})\}$ and $(\mu_{1,n},\mu_{2,n})$ are shown to be bounded in $H$ and $\mathbb{R}^2$, respectively.
			Since $22_\omega^*<p+q<\frac{2N+2\omega+4}{N}$, one has $0<\gamma_p+\gamma_q<2$. By \eqref{3.7}, we obtain
			{\small
			\begin{align*}
				c+o_n(1)=&\mathcal{J}_\theta(v_{1,n},v_{2,n})-\frac{1}{22_\omega^*}P_\theta(v_{1,n},v_{2,n}) \geq \frac{2_\omega^*-1}{22_\omega^*}(|\nabla v_{1,n}|_2^2+|\nabla v_{2,n}|_2^2)\\
				&-\frac{22_\omega^*-\gamma_p-\gamma_q}{22_\omega^*}\theta C(N,p,q)(\alpha_1^2+\alpha_2^2)^{\frac{p+q-\gamma_p-\gamma_q}{2}}(|\nabla v_{1,n}|_2^2+|\nabla v_{2,n}|_2^2)^{\frac{\gamma_p+\gamma_q}{2}}-\varepsilon\alpha_1\alpha_2.
			\end{align*}
		}
			
			\par Hence, $\{(v_{1,n},v_{2,n})\}$ is bounded in $H$. Based on \eqref{3.6}, it can be inferred that
			\[ \mu_{1,n}=-\frac{1}{\alpha_1^2}\mathcal{J}_\theta'(v_{1,n},v_{2,n})[(v_{1,n},0)]+o_n(1), \qquad \mu_{2,n}=-\frac{1}{\alpha_2^2}\mathcal{J}_\theta'(v_{1,n},v_{2,n})[(0,v_{2,n})]+o_n(1).\]
			Thus, $(\mu_{1,n},\mu_{2,n})$ is bounded in $\mathbb{R}^2$. Then there exist $(v_1,v_2) \in H$, $(\mu_1,\mu_2) \in \mathbb{R}^2$, up to a subsequence, such that
			\begin{align*}
				&(v_{1,n},v_{2,n})\rightharpoonup (v_1,v_2)\quad in  \; H,\\
				&(v_{1,n},v_{2,n})\to (v_1,v_2) \quad in  \; L^r(\mathbb{R}^N)\times L^r(\mathbb{R}^N),~r\in (2,2^*)\\
				&(v_{1,n},v_{2,n})\to (v_1,v_2) \quad a.e \; in  \; \mathbb{R}^N,\\
				&(\mu_{1,n},\mu_{2,n}) \to (\mu_1,\mu_2) \quad in  \; \mathbb{R}^N.
			\end{align*}
			
			\par In addition, it follows from \eqref{3.6} and \eqref{3.8} that
			\begin{align}
				\mathcal{J}_\theta'(v_1,v_2)+\mu_1 v_1+\mu_2 v_2&=0, \qquad v_1\geq 0,\; v_2\geq 0. \label{3.10}\\
				P_\theta(v_1,v_2)&=0 .   
				\label{Pbeta(u1,u2)0)}
			\end{align}
			
			\textbf{Step 2:} We intend to show $v_1 \neq 0$, $v_2 \neq 0$. Then $v_1,v_2>0$ by the maximum principle.
			To derive a contradiction, we may suppose $v_1=0$. By \eqref{3.10}, we conclude that
			\begin{equation*}
				\left\{
				\begin{aligned}
					0&=\varepsilon v_2,\\
					-\Delta v_2+\mu_2v_2&=( I_\omega * |v_2|^{2_\omega^*} ) |v_2|^{2_\omega^* -2} v_2 .
				\end{aligned}
				\right.
			\end{equation*}
			Thus, we get $v_2=0$.
			With no loss of generality, we assume $|\nabla v_{1,n}|_2^2 \to r_1 \geq 0$, $|\nabla v_{2,n}|_2^2 \to r_2 \geq 0$. 
			If $r_1=r_2=0$, by \eqref{GN} and \eqref{SH}, we have
			{\small
			\begin{align*}
				c+o_n(1)=&\mathcal{J}_\theta(v_{1,n},v_{2,n})\\
				=&\frac{1}{2}(|\nabla v_{1,n}|^2_2+|\nabla v_{2,n}|^2_2)-\frac{1}{22_\omega^*}\int_{\mathbb{R}^N}\left((I_\omega*|v_{1,n}|^{2_\omega^*})|v_{1,n}|^{2_\omega^*}+(I_\omega*|v_{2,n}|^{2_\omega^*})|v_{2,n}|^{2_\omega^*}\right)\\
				&-\theta \int_{\mathbb{R}^N}(I_\omega*|v_{1,n}|^p)|v_{2,n}|^q-\varepsilon \int_{\mathbb{R}^N}v_{1,n}v_{2,n} \geq -\varepsilon\alpha_1\alpha_2 +o_n(1),
			\end{align*}
		}
			which contradicts $c<-\varepsilon\alpha_1\alpha_2 $. Therefore, $r_1+r_2>0$. 
			
			\par Due to the weak convergence $(v_{1,n},v_{2,n})\rightharpoonup (v_1,v_2)$ in $H$, we conclude from  \cite[Lemma 2.5]{zhang2023normalizedsolutionscriticalchoquard} that
			\begin{equation}
			\int_{\mathbb{R}^N}(I_\omega*|v_{1,n}|^p)|v_{2,n}|^q=\int_{\mathbb{R}^N}(I_\omega*|v_1|^p)|v_2|^q+o_n(1).
			\label{2.12}
			\end{equation}
			Using \eqref{2.12}, we get $\int_{\mathbb{R}^N}(I_\omega*|v_{1,n}|^p)|v_{2,n}|^q \to 0$. By \eqref{SH} and \eqref{3.7}, we have
			\[r_1+r_2\leq S_H^{-2_\omega^*}(r_1^{2_\omega^*}+r_2^{2_\omega^*}) \leq  S_H^{-2_\omega^*}(r_1+r_2)^{2_\omega^*}.\]
			Then $r_1+r_2 \geq S_H^{\frac{2_\omega^*}{2_\omega^*-1}}$. Thus, \[c=\lim\limits_{n \to +\infty}\mathcal{J}_\theta(v_{1,n},v_{2,n}) \geq \frac{2_\omega^*-1}{22_\omega^*}S_H^{\frac{2_\omega^*}{2_\omega^*-1}}-\varepsilon\alpha_1\alpha_2,\] which contradicts \eqref{3.9}.
			
			\textbf{Step 3:}  We prove that $(v_{1,n},v_{2,n})\to (v_1,v_2)$ in $H$.
			If $\mu_1 \leq 0$, then
			\[-\Delta v_1 =- \mu_1 v_1+ ( I_\omega * |v_1|^{2_\omega^*} ) |v_1|^{2_\omega^* -2} v_1 + \theta p( I_\omega * |v_2|^q)|v_1|^{p-2} v_1 + \varepsilon v_2 \geq 0, \qquad in \,\, \mathbb{R}^N.\]
			By Lemma \ref{smooth} and \ref{Liouville}, we get $v_1 \equiv 0$ in $\mathbb{R}^N$, which is in contradiction with step 2. Therefore $\mu_1 >0$ and similarly $\mu_2>0$.
			Let $(\overline{v}_{1,n},\overline{v}_{2,n}):=(v_{1,n}-v_1,v_{2,n}-v_2)$. Recalling the Brezis-Lieb type lemma \cite[Lemma 3.2]{Yang2013} and \cite[Lemma 2.2]{chen2021normalized}, we obtain the following equalities:
			{\small
			\begin{align}
				\int_{\mathbb{R}^N}(I_\omega*|v_{1,n}|^{2_\omega^*})|v_{1,n}|^{2_\omega^*}&=\int_{\mathbb{R}^N}(I_\omega*|v_1|^{2_\omega^*})|v_1|^{2_\omega^*}+\int_{\mathbb{R}^N}(I_\omega*|\overline{v}_{1,n}|^{2_\omega^*})|\overline{v}_{1,n}|^{2_\omega^*}+o_n(1)\label{Brezis-Lieb of Riesz potential}, \\
				\int_{\mathbb{R}^N}(I_\omega*|v_{2,n}|^{2_\omega^*})|v_{2,n}|^{2_\omega^*}&=\int_{\mathbb{R}^N}(I_\omega*|v_2|^{2_\omega^*})|v_2|^{2_\omega^*}+\int_{\mathbb{R}^N}(I_\omega*|\overline{v}_{2,n}|^{2_\omega^*})|\overline{v}_{2,n}|^{2_\omega^*}+o_n(1),\\
				\int_{\mathbb{R}^N}v_{1,n}v_{2,n}&=\int_{\mathbb{R}^N}v_1v_2+\int_{\mathbb{R}^N}\overline{v}_{1,n}\overline{v}_{2,n}+o_n(1). \label{Brezis-Lieb of linear couplings}
			\end{align}
		}
			
			\par By \eqref{3.7}, \eqref{Pbeta(u1,u2)0)} and \eqref{2.12}, we obtain
			{\small
			\begin{equation}
				|\nabla \overline{v}_{1,n}|^2_2+|\nabla\overline{v}_{2,n}|^2_2 =\int_{\mathbb{R}^N}(I_\omega*|\overline{v}_{1,n}|^{2_\omega^*})|\overline{v}_{1,n}|^{2_\omega^*}+\int_{\mathbb{R}^N}(I_\omega*|\overline{v}_{2,n}|^{2_\omega^*})|\overline{v}_{2,n}|^{2_\omega^*}+o_n(1).
				\label{2.20}
			\end{equation}
		}
			
			\par We claim that $|\nabla \overline{v}_{1,n}|^2_2+|\nabla\overline{v}_{2,n}|^2_2 \to 0$. Otherwise, similar to step 2, we have
			\[|\nabla \overline{v}_{1,n}|^2_2+|\nabla\overline{v}_{2,n}|^2_2 \to r \geq  S_H^{\frac{2_\omega^*}{2_\omega^*-1}} .	\]
			Note that $v_1,v_2>0$, $0<|v_1|_2 \leq \alpha_1$, $0<|v_2|_2 \leq \alpha_2$. By Lemma \ref{Lemma3.6单调性} and \eqref{2.20}, we get
			\begin{align*}
				m_\theta(\alpha_1,\alpha_2)+o_n(1)=&\mathcal{J}_\theta(v_{1,n},v_{2,n})\\
				=&\mathcal{J}_\theta(v_1,v_2)+\frac{1}{2}(|\nabla \overline{v}_{1,n}|^2_2+|\nabla\overline{v}_{2,n}|^2_2)-\frac{1}{22_\omega^*}\int_{\mathbb{R}^N} (I_\omega*|\overline{v}_{1,n}|^{2_\omega^*})|\overline{v}_{1,n}|^{2_\omega^*}\\
				&-\frac{1}{22_\omega^*}\int_{\mathbb{R}^N}(I_\omega*|\overline{v}_{2,n}|^{2_\omega^*})|\overline{v}_{2,n}|^{2_\omega^*} -\varepsilon\int_{\mathbb{R}^N}v_{1,n}v_{2,n}+\varepsilon\int_{\mathbb{R}^N}v_1v_2\\
				\geq & m_\theta(|v_1|_2,|v_2|_2)+\frac{2_\omega^*-1}{22_\omega^*}(|\nabla \overline{v}_{1,n}|^2_2+|\nabla\overline{v}_{2,n}|^2_2)-\varepsilon\alpha_1\alpha_2+o_n(1)\\
				\geq & m_\theta(\alpha_1,\alpha_2)+\frac{2_\omega^*-1}{22_\omega^*} S_H^{\frac{2_\omega^*}{2_\omega^*-1}} -\varepsilon\alpha_1\alpha_2+o_n(1),
			\end{align*}
			which contradicts \eqref{3.9}. Therefore, $|\nabla \overline{v}_{1,n}|^2_2+|\nabla\overline{v}_{2,n}|^2_2 \to 0$.
			
			\par Let $|v_1|_2=\overline{\alpha}_1 \in (0,\alpha_1]$, $|v_2|_2=\overline{\alpha}_2 \in (0,\alpha_2]$. Applying the test function $(v_{1,n},v_{2,n})$ and $(v_1,v_2)$ to \eqref{3.6} and \eqref{3.10} respectively, by \eqref{2.12} and \eqref{Brezis-Lieb of Riesz potential}, we obtain
			\[\mu_1(\alpha_1^2-\overline{\alpha}_1^2)+\mu_2(\alpha_2^2-\overline{\alpha}_2^2)=2\varepsilon \lim\limits_{n \to +\infty} \int_{\mathbb{R}^N}(v_{1,n}-v_1)(v_{2,n}-v_2).\]
			
			\par We claim that $\overline{\alpha}_1=\alpha_1$, $\overline{\alpha}_2=\alpha_2$. Otherwise, we may assume $0<\overline{\alpha}_1<\alpha_1$.\\
			If $\overline{\alpha}_2=\alpha_2$, we have
			\[\mu_1(\alpha_1^2-\overline{\alpha}_1^2)=2\varepsilon \lim\limits_{n \to +\infty} \int_{\mathbb{R}^N}(v_{1,n}-v_1)(v_{2,n}-v_2) \leq 2\varepsilon \sqrt{(\alpha_1^2-\overline{\alpha}_1^2)(\alpha_2^2-\overline{\alpha}_2^2)}=0,
			\]
			which contradicts $\overline{\alpha}_1<\alpha_1$.\\
			If $0<\overline{\alpha}_2<\alpha_2$, we get
			\begin{align*}
				2\sqrt{\mu_1\mu_2}\sqrt{(\alpha_1^2-\overline{\alpha}_1^2)(\alpha_2^2-\overline{\alpha}_2^2)}&\leq \mu_1(\alpha_1^2-\overline{\alpha}_1^2)+\mu_2(\alpha_2^2-\overline{\alpha}_2^2)\\
				&=2\varepsilon \lim\limits_{n \to +\infty} \int_{\mathbb{R}^N}(v_{1,n}-v_1)(v_{2,n}-v_2)\\
				&\leq 2\varepsilon \sqrt{(\alpha_1^2-\overline{\alpha}_1^2)(\alpha_2^2-\overline{\alpha}_2^2)}.
			\end{align*}
			Hence $\varepsilon\geq \sqrt{\mu_1\mu_2}$. Let $(\overline{v}_1,\overline{v}_2):=(\sqrt{\mu_2}v_1,\sqrt{\mu_1}v_2)$, we have 
			\begin{equation*}
				\left\{
				\begin{aligned}
					-\Delta \overline{v}_1+\mu_1 \overline{v}_1 \geq \mu_2 \overline{v}_2,\\
					-\Delta \overline{v}_2+\mu_2 \overline{v}_2 \geq \mu_1 \overline{v}_1.
				\end{aligned}
				\right.
			\end{equation*}
			Then $-\Delta(\overline{v}_1+\overline{v}_2) \geq 0$. By  Lemma \ref{Liouville}, we obtain $\overline{v}_1 \equiv 0$, $\overline{v}_2 \equiv 0 $, which is in contradiction with step 2.
			Thus, $(v_{1,n},v_{2,n})\to (v_1,v_2)$ in $H$.
	\end{proof}
	
	\begin{proof}[Proof of Theorem 1.1]\leavevmode\par
		 Let $(\overline{v}_{1,n},\overline{v}_{2,n}) \in  A_{T_0}(\alpha_1,\alpha_2)$ be a minimizing sequence of $m_\theta(\alpha_1,\alpha_2)$.
		Notice that \[\mathcal{J}_\theta (|\overline{v}_{1,n}|,|\overline{v}_{2,n}|) \leq  \mathcal{J}_\theta(\overline{v}_{1,n},\overline{v}_{2,n}).\] 
		We can suppose that $\overline{v}_{1,n},\overline{v}_{2,n} \geq 0$. Using Lemma \ref{Lemma3.5变分结构}, let $(\widetilde{v}_{1,n},\widetilde{v}_{2,n}):=s_\theta (\overline{v}_{1,n},\overline{v}_{2,n}) \diamond (\overline{v}_{1,n},\overline{v}_{2,n})$. Then $\mathcal{J}_\theta(\widetilde{v}_{1,n},\widetilde{v}_{2,n}) \leq  \mathcal{J}_\theta (\overline{v}_{1,n},\overline{v}_{2,n})$ and $(\widetilde{v}_{1,n},\widetilde{v}_{2,n}) \in \mathcal{P}_\theta^+(\alpha_1,\alpha_2)$.\\
		By Ekeland's variational principle, there exist Palais-Smale sequence $(v_{1,n},v_{2,n})$ for\\ $\mathcal{J}_\theta|_{\mathcal{S}(\alpha_1,\alpha_2)}$ such that \[||(v_{1,n},v_{2,n})-(\widetilde{v}_{1,n},\widetilde{v}_{2,n})||_H \to 0, \qquad \text{as}\; n \to +\infty. \]
		
		\par Thus $P_\theta (v_{1,n},v_{2,n}) =P_\theta(\widetilde{v}_{1,n},\widetilde{v}_{2,n}) + o_n(1)=o_n(1)$ and $v_{1,n}^-,v_{2,n}^- \to 0$ a.e in $\mathbb{R}^N$. By Lemma \ref{Lemma3.7紧性}, we have
		$(v_{1,n},v_{2,n})\to (v_1,v_2)$ in $H$ and $(\mu_{1,n},\mu_{2,n}) \to (\mu_1,\mu_2)$ in $\mathbb{R}^2$ with $\mu_1,\mu_2 >0$. Therefore, we obtain a normalized ground state $(v_1,v_2)$ of system \eqref{system1.1}.
	\end{proof}

	\subsection{The $L^2$-supercritical case $p+q>\frac{2N+2\omega+4}{N}$}

	\begin{lem}
		For $\theta >0$ and for any $(v_1,v_2) \in \mathcal{S}(\alpha_1,\alpha_2)$, $\Psi_{(v_1,v_2)}^\theta(t)$ admits precisely one critical point $t_\theta(v_1,v_2)$ satisfying
		\[\Psi_{(v_1,v_2)}^\theta(t_\theta(v_1,v_2))=\max_{t>0} \Psi_{(v_1,v_2)}^\theta(t) > -\varepsilon\alpha_1\alpha_2 . \]
		Moreover,
		\begin{enumerate}[label=(\roman*)]
			\item $\mathcal{P}_\theta(\alpha_1,\alpha_2)=\mathcal{P}_\theta^-(\alpha_1,\alpha_2)$,  which is a submanifold of $H$.
			\item $t \diamond (v_1,v_2) \in \mathcal{P}_\theta(\alpha_1,\alpha_2)$ exactly when 
			$t= t_\theta(v_1,v_2)$.
			\item $\Psi_{(v_1,v_2)}^\theta(t)$ decreases strictly and exhibits concavity on $(t_\theta(v_1,v_2),+\infty)$.
			\item The function $(v_1,v_2) \mapsto t_\theta(v_1,v_2)$ is of class $C^1$.
		\end{enumerate}
		\label{Lemma 3.8 变分结构}
	\end{lem}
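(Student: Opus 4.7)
The plan is to analyze the fiber map $\Psi := \Psi_{(v_1,v_2)}^\theta$ via its first and second derivatives. Setting
\[
A := |\nabla v_1|_2^2 + |\nabla v_2|_2^2, \quad B := \int_{\mathbb{R}^N}\left((I_\omega*|v_1|^{2_\omega^*})|v_1|^{2_\omega^*}+(I_\omega*|v_2|^{2_\omega^*})|v_2|^{2_\omega^*}\right), \quad C := \int_{\mathbb{R}^N}(I_\omega*|v_1|^p)|v_2|^q,
\]
one has $\Psi'(t) = t\bigl[A - Bt^{2\cdot 2_\omega^* - 2} - \theta(\gamma_p+\gamma_q)Ct^{\gamma_p + \gamma_q - 2}\bigr]$. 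Under the $L^2$-supercritical hypothesis $p+q > \frac{2N+2\omega+4}{N}$ we have $\gamma_p + \gamma_q > 2$, and $2\cdot 2_\omega^* > 2$ always. Hence
\[
\rho(t) := Bt^{2\cdot 2_\omega^* - 2} + \theta(\gamma_p+\gamma_q)Ct^{\gamma_p+\gamma_q - 2}
\]
is strictly increasing from $0$ to $+\infty$ on $(0,+\infty)$, so $\rho(t) = A > 0$ has exactly one solution $t_\theta(v_1,v_2)$, which is the unique critical point of $\Psi$. Since the sign of $\Psi'(t)$ coincides with that of $A - \rho(t)$, the map $\Psi$ is strictly increasing on $(0, t_\theta)$ and strictly decreasing on $(t_\theta, +\infty)$; in particular $t_\theta$ is the global maximum, giving (ii) and the maximum characterization. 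The lower bound $\Psi(t_\theta) > -\varepsilon\alpha_1\alpha_2$ follows from $\lim_{t\to 0^+}\Psi(t) = -\varepsilon \int v_1 v_2 \geq -\varepsilon\alpha_1\alpha_2$ (Cauchy--Schwarz) combined with the strict increase on $(0, t_\theta)$.

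To prove $\mathcal{P}_\theta(\alpha_1,\alpha_2) = \mathcal{P}_\theta^-(\alpha_1,\alpha_2)$, I would substitute the critical-point relation $A = \rho(t_\theta)$ into
\[
\Psi''(t_\theta) = A - (2\cdot 2_\omega^* - 1)Bt_\theta^{2\cdot 2_\omega^* - 2} - \theta(\gamma_p+\gamma_q)(\gamma_p+\gamma_q-1)Ct_\theta^{\gamma_p+\gamma_q-2},
\]
which yields
\[
\Psi''(t_\theta) = -(2\cdot 2_\omega^* - 2)Bt_\theta^{2\cdot 2_\omega^* - 2} - \theta(\gamma_p+\gamma_q)(\gamma_p+\gamma_q-2)Ct_\theta^{\gamma_p+\gamma_q-2} < 0,
\]
decisively using $2\cdot 2_\omega^* > 2$, $\gamma_p + \gamma_q > 2$, and $B > 0$. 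This is exactly the equality $\mathcal{P}_\theta = \mathcal{P}_\theta^-$. For (iii), $t \mapsto \Psi''(t)$ is itself strictly decreasing on $(0,+\infty)$ (both $t^{2\cdot 2_\omega^* - 2}$ and $t^{\gamma_p+\gamma_q-2}$ increase with positive coefficients), so $\Psi''(t) < \Psi''(t_\theta) < 0$ for all $t > t_\theta$; combined with $\Psi'(t) < 0$ there this yields both strict concavity and strict monotonic decrease.

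The submanifold assertion in (i) follows the template of Lemma \ref{Lemma3.5变分结构}(i): it suffices to show that $d(P_\theta, G_1, G_2) : H \to \mathbb{R}^3$ is surjective at every $(v_1,v_2) \in \mathcal{P}_\theta(\alpha_1,\alpha_2)$, where $G_i(v_i) := \alpha_i^2 - |v_i|_2^2$. If this failed, independence of $dG_1(v_1)$ and $dG_2(v_2)$ would force $dP_\theta(v_1,v_2)$ to be a linear combination of them, producing Lagrange multipliers $\mu_1, \mu_2$ that make $(v_1,v_2)$ solve an elliptic system whose associated Pohozaev identity forces $\Psi''(1) = 0$, contradicting $\mathcal{P}_\theta = \mathcal{P}_\theta^-$ just established. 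Finally, (iv) is the implicit function theorem applied to $\Phi(t, (v_1,v_2)) := (\Psi_{(v_1,v_2)}^\theta)'(t)$ at $(t_\theta(v_1,v_2), (v_1,v_2))$, using $\partial_t \Phi = \Psi''(t_\theta) \neq 0$. No essential obstacle is anticipated: the $L^2$-supercritical condition is precisely what makes $\rho$ strictly monotone, collapsing the two-critical-point picture of Lemma \ref{Lemma3.5变分结构} to a single one and reducing every assertion to elementary sign computations.
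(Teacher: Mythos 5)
Your proposal is correct and follows essentially the same route as the paper: both arguments reduce to the observation that under $\gamma_p+\gamma_q>2$ the map $t\mapsto Bt^{2\cdot 2_\omega^*-2}+\theta(\gamma_p+\gamma_q)Ct^{\gamma_p+\gamma_q-2}$ is strictly increasing (equivalently, $\Psi'(t)/t$ is strictly decreasing), which forces a unique critical point that is a strict maximum, with the sign of $\Psi''$ at the critical point, the concavity on $(t_\theta,+\infty)$, the submanifold claim via the Lagrange-multiplier/Pohozaev contradiction from Lemma \ref{Lemma3.5变分结构}, and the $C^1$ dependence via the implicit function theorem all handled as in the paper. Your substitution of $A=\rho(t_\theta)$ into $\Psi''(t_\theta)$ is the same computation the paper performs at $t=1$ when ruling out $\mathcal{P}_\theta\setminus\mathcal{P}_\theta^-$, and your bound $\Psi(t_\theta)>-\varepsilon\int v_1v_2\geq-\varepsilon\alpha_1\alpha_2$ matches the paper's use of the limit as $t\to 0^+$ together with the initial increase of $\Psi$.
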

	\begin{proof}
		$(\romannumeral1)$ To derive a contradiction, assume that $(v_1,v_2) \in \mathcal{P}_\theta(\alpha_1,\alpha_2)\setminus \mathcal{P}_\theta^-(\alpha_1,\alpha_2)$. Then
		\begin{align*}
			(\Psi_{(v_1,v_2)}^\theta)'(1)=&|\nabla v_1|^2_2+|\nabla v_2|^2_2- \int_{\mathbb{R}^N}\left((I_\omega*|v_1|^{2_\omega^*})|v_1|^{2_\omega^*}+(I_\omega*|v_2|^{2_\omega^*})|v_2|^{2_\omega^*}\right)\\
			&-\theta(\gamma_p+\gamma_q)\int_{\mathbb{R}^N}(I_\omega*|v_1|^p)|v_2|^q=0,
		\end{align*}
		\begin{align*}
			(\Psi_{(v_1,v_2)}^\theta)''(1)=&2(|\nabla v_1|^2_2+|\nabla v_2|^2_2)- 2_\omega^*\int_{\mathbb{R}^N}\left((I_\omega*|v_1|^{2_\omega^*})|v_1|^{2_\omega^*}+(I_\omega*|v_2|^{2_\omega^*})|v_2|^{2_\omega^*}\right)\\
			&-\theta(\gamma_p+\gamma_q)^2 \int_{\mathbb{R}^N}(I_\omega*|v_1|^p)|v_2|^q \geq 0.
		\end{align*}
		Thus,
		{\small
		\begin{equation*}
			(22_\omega^*-2)\int_{\mathbb{R}^N}\left((I_\omega*|v_1|^{2_\omega^*})|v_1|^{2_\omega^*}+(I_\omega*|v_2|^{2_\omega^*})|v_2|^{2_\omega^*}\right) \leq -\theta (\gamma_p+\gamma_q)(\gamma_p+\gamma_q-2) \int_{\mathbb{R}^N}(I_\omega*|v_1|^p)|v_2|^q,
		\end{equation*}
	}
		which contradicts $\gamma_p+\gamma_q>2$. Analogous to Lemma \ref{Lemma3.5变分结构},  $\mathcal{P}_\theta(\alpha_1,\alpha_2)$ lies in $H$ as a submanifold.\\
		$(\romannumeral2)$-$(\romannumeral4)$ Notice that $\lim\limits_{t \to 0^+}\Psi_{(v_1,v_2)}^\theta(t)=-\varepsilon\int_{\mathbb{R}^N}v_1v_2$, $\lim\limits_{t \to +\infty}\Psi_{(v_1,v_2)}^\theta(t)=-\infty$ and $\Psi_{(v_1,v_2)}^\theta(t)$ is increasing as $0<t\ll1$.
		Therefore, $\Psi_{(v_1,v_2)}^\theta(t)$ has a global maximum point $t_\theta(v_1,v_2)$. 
		
		\par Moreover, $\frac{1}{t}(\Psi_{(v_1,v_2)}^\theta)'(t)$ is decreasing in $(0, +\infty)$, $\Psi_{(v_1,v_2)}^\theta(t)$ has exactly one critical point. In addition, $t_\theta(v_1,v_2)\diamond (v_1,v_2) \in \mathcal{P}_\theta^-(\alpha_1,\alpha_2)$, $(\Psi_{(v_1,v_2)}^\theta)''(t_\theta(v_1,v_2))<0$. Since $(\Psi_{(v_1,v_2)}^\theta)''(t)$ is decreasing in $(0,+\infty)$, $(\Psi_{(v_1,v_2)}^\theta)''(t)<0$ for every $t>t_\theta(v_1,v_2)$. Finally, similar to Lemma \ref{Lemma3.5变分结构}, the map $(v_1,v_2) \mapsto t_\theta(v_1,v_2)$ has $C^1$-regularity due to the implicit function theorem.
	\end{proof}
	
	We now present the properties of $m_\theta(\alpha_1,\alpha_2)$ and its monotonicity.
	\begin{lem}
		\begin{enumerate}[label=(\roman*)]
			\item One can find $\xi>0$ satisfying
			\[m_\theta(\alpha_1,\alpha_2)=\inf\limits_{\mathcal{S}(\alpha_1,\alpha_2)}\max_{t>0}\mathcal{J}_\theta(t\diamond(v_1,v_2))=\inf\limits_{\gamma \in \Gamma_\theta}\max_{t\in [0,1]}\mathcal{J}_\theta(\gamma(t))>-\varepsilon \alpha_1\alpha_2,	\] 
			\[
			\Gamma_\theta :=\{\gamma:[0,1]\to \mathcal{S}(\alpha_1,\alpha_2) : \gamma~ \text{is continuous}, ~\gamma (0)\in A_\xi(\alpha_1,\alpha_2),~ \mathcal{J}_\theta (\gamma(1)) \leq -\varepsilon \alpha_1 \alpha_2\}.\]
			\item $m_\theta(\alpha_1,\alpha_2) \leq m_\theta(\alpha_1',\alpha_2')$, for every $0<\alpha_1'\leq \alpha_1$, $0<\alpha_2' \leq \alpha_2$.
			\item $m_\theta(\alpha_1,\alpha_2)$ is decreasing in $\theta \in (0,+\infty)$.
		\end{enumerate}
		\label{Lemma3.9能量值}
	\end{lem}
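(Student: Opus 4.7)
The plan is to establish the three assertions in sequence, with the mountain-pass characterisation (i) as the substantive step and (ii), (iii) following by comparatively direct arguments. As a common preparation, I would first pin down the constant $\xi>0$ through the requirement that $\mathcal{P}_\theta(\alpha_1,\alpha_2)$ be separated from the origin in the gradient norm: writing $s^2:=|\nabla v_1|_2^2+|\nabla v_2|_2^2$, the identity $P_\theta=0$ combined with \eqref{SH} and \eqref{GN} forces $s^2\le C_1 s^{2\cdot 2_\omega^*}+C_2 s^{\gamma_p+\gamma_q}$, and since both exponents strictly exceed $2$ in the supercritical regime this yields a uniform lower bound $s\ge\xi$. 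Choosing $\xi$ at this threshold gives $\mathcal{P}_\theta(\alpha_1,\alpha_2)\cap A_\xi(\alpha_1,\alpha_2)=\emptyset$.

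For (i), the identity $m_\theta(\alpha_1,\alpha_2)=\inf_{\mathcal{S}(\alpha_1,\alpha_2)}\max_{t>0}\mathcal{J}_\theta(t\diamond(v_1,v_2))$ is immediate from Lemma \ref{Lemma 3.8 变分结构}, since for each $(v_1,v_2)\in\mathcal{S}(\alpha_1,\alpha_2)$ the unique maximiser $t_\theta(v_1,v_2)\diamond(v_1,v_2)$ lies in $\mathcal{P}_\theta$. The minimax-over-paths equality I would prove by two inclusions. The direction $\inf_{\Gamma_\theta}\max\le m_\theta$ is obtained through the explicit ray $\gamma(s):=((1-s)t_0+st_1)\diamond(v_1,v_2)$, with $t_0$ small so that $\gamma(0)\in A_\xi(\alpha_1,\alpha_2)$ and $t_1$ large so that $\mathcal{J}_\theta(\gamma(1))\le-\varepsilon\alpha_1\alpha_2$, the latter being attainable because $\Psi_{(v_1,v_2)}^\theta\to-\infty$ as $t\to+\infty$. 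The reverse inequality rests on the crossing lemma: the continuous map $s\mapsto t_\theta(\gamma(s))$ satisfies $t_\theta(\gamma(0))>1$ (since $t_\theta(\gamma(0))\diamond\gamma(0)\in\mathcal{P}_\theta$ has gradient $\ge\xi$ while $\gamma(0)\in A_\xi$) and $t_\theta(\gamma(1))<1$ (since $\Psi_{\gamma(1)}^\theta$ is strictly increasing on $(0,t_\theta]$ with limit at $0^+$ bounded below by $-\varepsilon\alpha_1\alpha_2$, so the condition $\Psi_{\gamma(1)}^\theta(1)\le-\varepsilon\alpha_1\alpha_2$ places $1$ past the maximum), whence the intermediate value theorem produces $s_\ast\in(0,1)$ with $\gamma(s_\ast)\in\mathcal{P}_\theta$. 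Finally, the strict inequality $m_\theta>-\varepsilon\alpha_1\alpha_2$ is obtained by computing $\mathcal{J}_\theta-\frac{1}{2\cdot 2_\omega^*}P_\theta$ on $\mathcal{P}_\theta$ and using Pohozaev to substitute for the mixed Riesz term, producing
\[\mathcal{J}_\theta(v_1,v_2)\ge\frac{\gamma_p+\gamma_q-2}{2(\gamma_p+\gamma_q)}s^2-\varepsilon\alpha_1\alpha_2\ge\frac{(\gamma_p+\gamma_q-2)\xi^2}{2(\gamma_p+\gamma_q)}-\varepsilon\alpha_1\alpha_2,\]
with the strict sign ensured by $\gamma_p+\gamma_q>2$.

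For (ii), I would mimic Lemma \ref{Lemma3.6单调性}(ii): take a near-minimiser $(v_1,v_2)$ for $m_\theta(\alpha_1',\alpha_2')$, truncate via $v_{i,\delta}(x):=v_i(x)\varphi(\delta x)$ with $\delta$ small, and glue in radial bump functions $\phi_{\alpha_i}$ of prescribed $L^2$-mass $\sqrt{\alpha_i^2-|v_{i,\delta}|_2^2}$ supported in mutually disjoint annuli at infinity far from $\supp v_{i,\delta}$. Scaling the bumps by $t\diamond$ with $t$ small shrinks their energy contribution, the local cross terms vanish by disjointness of supports, and the nonlocal cross terms carry a favourable sign so they can be dropped in an upper bound for $\mathcal{J}_\theta$. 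Combining with the first identity of (i), one obtains $m_\theta(\alpha_1,\alpha_2)\le m_\theta(\alpha_1',\alpha_2')+\epsilon$ for arbitrary $\epsilon>0$. Assertion (iii) is immediate from pointwise monotonicity: for $\theta_2>\theta_1>0$,
\[\Psi_{(v_1,v_2)}^{\theta_2}(t)-\Psi_{(v_1,v_2)}^{\theta_1}(t)=-(\theta_2-\theta_1)t^{\gamma_p+\gamma_q}\int_{\mathbb{R}^N}(I_\omega*|v_1|^p)|v_2|^q\le0,\]
so taking $\max_t$, then $\inf_{\mathcal{S}(\alpha_1,\alpha_2)}$, and invoking the first identity of (i) gives $m_{\theta_2}(\alpha_1,\alpha_2)\le m_{\theta_1}(\alpha_1,\alpha_2)$.

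The principal technical obstacle is expected to lie in part (i), specifically in controlling the sign of $t_\theta(\gamma(s))-1$ at the two endpoints of an arbitrary admissible path. The bound $t_\theta(\gamma(1))<1$ rests on the subtle interplay between the ceiling of $-\varepsilon\int v_1 v_2$ at $-\varepsilon\alpha_1\alpha_2$ (the limit of $\Psi^\theta$ at $0^+$) and the fact that $\max\Psi^\theta$ strictly exceeds $-\varepsilon\alpha_1\alpha_2$, and one must handle the borderline case in which the two quantities coincide. The cutoff-plus-bump construction for (ii), while notationally heavier, is conceptually a direct transcription of the subcritical argument already carried out in Lemma \ref{Lemma3.6单调性}.
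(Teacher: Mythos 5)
Your proposal is correct and follows essentially the same route as the paper: the two min--max identities come from Lemma \ref{Lemma 3.8 变分结构} together with an explicit dilation path and a crossing argument (your detection of the crossing via the sign of $t_\theta(\gamma(s))-1$ and the intermediate value theorem is equivalent to the paper's use of the sign of $P_\theta(\gamma(s))$, since $P_\theta(v_1,v_2)=(\Psi^\theta_{(v_1,v_2)})'(1)$), the strict bound $m_\theta(\alpha_1,\alpha_2)>-\varepsilon\alpha_1\alpha_2$ comes from a linear combination $\mathcal{J}_\theta-cP_\theta$ together with $\inf_{\mathcal{P}_\theta(\alpha_1,\alpha_2)}(|\nabla v_1|_2^2+|\nabla v_2|_2^2)>0$, and (iii) is the same pointwise monotonicity in $\theta$. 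The only caveat concerns (ii), whose proof the paper itself omits: when transplanting the cutoff-plus-bump construction of Lemma \ref{Lemma3.6单调性} to the supercritical setting one must estimate $\mathcal{I}_\theta(w_1,w_2)=\max_{t>0}\mathcal{J}_\theta(t\diamond(w_1,w_2))$ for the glued function, i.e.\ control the perturbation uniformly over the fiber maximum rather than only the energy at a single small dilation as your sketch suggests.
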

	\begin{proof}
		By Lemma \ref{Lemma 3.8 变分结构}, we get $m_\theta(\alpha_1,\alpha_2)=\inf\limits_{\mathcal{P}_\theta^-(\alpha_1,\alpha_2)}\mathcal{J}_\theta=\inf\limits_{\mathcal{S}(\alpha_1,\alpha_2)}\max\limits_{t>0}\mathcal{J}_\theta(t\diamond(v_1,v_2))$.
		\par  Denote \[l:=\inf\limits_{\gamma \in \Gamma_\theta}\max\limits_{t\in [0,1]}\mathcal{J}_\theta(\gamma(t)).\]
		Using Lemma \ref{Lemma 3.8 变分结构} again, for $\xi >0$ small enough, we have $P_\theta(v_1,v_2)>0$ for $(v_1,v_2) \in A_\xi(\alpha_1,\alpha_2)$ and 
		$P_\theta(v_1,v_2)<0$ for $(v_1,v_2) \in \mathcal{J}_\theta^{-\varepsilon\alpha_1\alpha_2}:=\{(v_1,v_2)\in \mathcal{S}(\alpha_1,\alpha_2) :\mathcal{J}_\theta(v_1,v_2) \leq -\varepsilon\alpha_1\alpha_2 \}$. Then for each $\gamma \in \Gamma_\theta$, we can find $t_\gamma \in [0,1]$ satisfying $P_\theta (\gamma (t_\gamma))=0$. Then
		\[m_\theta(\alpha_1,\alpha_2)=\inf\limits_{\mathcal{P}_\theta(\alpha_1,\alpha_2)}\mathcal{J}_\theta \leq \mathcal{J}_\theta(\gamma (t_\gamma)) \leq \max\limits_{t \in [0,1]} \mathcal{J}_\theta(\gamma (t)).
		\]
		Thus,  $m_\theta(\alpha_1,\alpha_2) \leq l$.
		
		\par On the other hand, given $(v_1,v_2) \in \mathcal{P}_\theta(\alpha_1,\alpha_2)$, define $\gamma(t):=(tK+\frac{1}{K}) \diamond (v_1,v_2)$  for $K \gg 1$. By Lemma \ref{Lemma 3.8 变分结构}, we have $\gamma \in \Gamma_\theta$ and \[\mathcal{J}_\theta(v_1,v_2)=\mathcal{J}_\theta(\gamma(\frac{1}{K}-\frac{1}{K^2}))=\max\limits_{t \in [0,1]} \mathcal{J}_\theta(\gamma (t)) \geq l.\] Thus, $m_\theta(\alpha_1,\alpha_2) \geq l$.
		
		\par Next we show that $m_\theta(\alpha_1,\alpha_2) > -\varepsilon \alpha_1\alpha_2$. Since $\gamma_p+\gamma_q > 2$, for $(v_1,v_2) \in \mathcal{P}_\theta(\alpha_1,\alpha_2)$, by \eqref{GN} and \eqref{SH}, we obtain
		\begin{align*}
			|\nabla v_1|^2_2+|\nabla v_2|^2_2 \leq& \theta(\gamma_p+\gamma_q) C(N,p,q)(\alpha_1^2+\alpha_2^2)^{\frac{p+q-\gamma_p-\gamma_q}{2}} (|\nabla v_1|_2^2+|\nabla v_2|_2^2)^{\frac{\gamma_p+\gamma_q}{2}}\\
			&+S_H^{-2_\omega^*}(|\nabla v_1|^2_2+|\nabla v_2|^2_2)^{2_\omega^*}.
		\end{align*}
		Hence, $\inf\limits_{\mathcal{P}_\theta(\alpha_1,\alpha_2)}(|\nabla v_1|_2^2+|\nabla v_2|_2^2)>0$. Then
		{\small
		\begin{align*}
			m_\theta(\alpha_1,\alpha_2)=&\inf\limits_{(v_1,v_2) \in \mathcal{P}_\theta(\alpha_1,\alpha_2)}\mathcal{J}_\theta(v_1,v_2)-\frac{1}{\gamma_p+\gamma_q}P_\theta(v_1,v_2)\\
			=&\inf\limits_{(v_1,v_2) \in \mathcal{P}_\theta(\alpha_1,\alpha_2)}\frac{\gamma_p+\gamma_q-2}{2(\gamma_p+\gamma_q)}(|\nabla v_1|_2^2+|\nabla v_2|_2^2)\\
			&+\frac{22_\omega^*-(\gamma_p+\gamma_q)}{22_\omega^*(\gamma_p+\gamma_q)}\int_{\mathbb{R}^N}\left((I_\omega*|v_1|^{2_\omega^*})|v_1|^{2_\omega^*}+(I_\omega*|v_2|^{2_\omega^*})|v_2|^{2_\omega^*}\right)-\varepsilon\int_{\mathbb{R}^N}v_1v_2\\
			>&-\varepsilon\alpha_1\alpha_2.
		\end{align*}
	}
		\par Finally, assume $0<\theta' \leq \theta$, we have
		\[m_\theta(\alpha_1,\alpha_2)=\inf\limits_{\mathcal{S}(\alpha_1,\alpha_2)}\max_{t>0}\mathcal{J}_\theta(t\diamond(v_1,v_2)) \leq \inf\limits_{\mathcal{S}(\alpha_1,\alpha_2)}\max_{t>0}\mathcal{J}_{\theta'}(t\diamond(v_1,v_2)) = m_{\theta'}(\alpha_1,\alpha_2).\]
	\end{proof}

	Define $\mathcal{I}_\theta : \mathcal{S}(\alpha_1,\alpha_2) \to \mathbb{R} \cup \{+\infty\}$ by 
	\[\mathcal{I}_\theta (v_1,v_2)=\max_{t>0} \Psi_{(v_1,v_2)}^\theta (t).\]
	
	To obtain the Palais-Smale sequence, the following lemmas are required.
	\begin{lem}\cite[Lemma 3.1]{chen2021normalizedsolutionsnonlinearschrodinger}
		For any $\epsilon >0$, if $(v_{1,0},v_{2,0}) \in \mathcal{S}(\alpha_1,\alpha_2)$ satisfies $\mathcal{I}_\theta(v_{1,0},v_{2,0}) \leq \inf\limits_{\mathcal{S}(\alpha_1,\alpha_2)}\mathcal{I}_\theta(v_1,v_2)+\epsilon$, then for any $\eta >0$,
		we can find $(v_{1,\eta},v_{2,\eta}) \in \mathcal{S}(\alpha_1,\alpha_2) $, $v_{1,\eta} \geq 0$, $v_{2,\eta} \geq  0$ such that
		\begin{align*}
			&\mathcal{I}_\theta (v_{1,\eta},v_{2,\eta}) \leq \mathcal{I}_\theta(v_{1,0},v_{2,0}),\\
			&||(v_{1,\eta},v_{2,\eta})-(v_{1,0},v_{2,0})||_H \leq \eta,\\
			\mathcal{I}_\theta(v_1,v_2) > \mathcal{I}_\theta (v_{1,\eta},v_{2,\eta}) - \frac{\epsilon}{\eta}&||(v_{1,\eta},v_{2,\eta})-(v_1,v_2)||_H , \quad  \forall  (v_1,v_2)\in \mathcal{S}(\alpha_1,\alpha_2) \setminus \{(v_{1,\eta},v_{2,\eta})\}.
		\end{align*}
	\end{lem}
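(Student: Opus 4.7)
My plan is to derive this statement as an application of Ekeland's variational principle on $\mathcal{S}(\alpha_1,\alpha_2)$ equipped with the induced $H$-metric, after a preliminary reduction to non-negative pairs. I first verify the standard Ekeland hypotheses for the functional $\mathcal{I}_\theta$. Lemma \ref{Lemma 3.8 变分结构} gives the existence of a unique $C^1$-smooth maximizer $t_\theta(v_1,v_2)$ of $\Psi^\theta_{(v_1,v_2)}(t)$, which combined with the continuity of $\Psi^\theta_{(\cdot)}(\cdot)$ in both arguments shows that $\mathcal{I}_\theta$ is continuous, hence lower semi-continuous, on $\mathcal{S}(\alpha_1,\alpha_2)$. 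Lemma \ref{Lemma3.9能量值}(i) provides the lower bound $\mathcal{I}_\theta\geq m_\theta(\alpha_1,\alpha_2)>-\varepsilon\alpha_1\alpha_2>-\infty$. Finally, $\mathcal{S}(\alpha_1,\alpha_2)$ is a closed $C^1$-submanifold of the Hilbert space $H$ and is therefore complete in the induced distance.

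To build in non-negativity of the Ekeland point, I would exploit the monotonicity $\mathcal{I}_\theta(|v_1|,|v_2|)\leq \mathcal{I}_\theta(v_1,v_2)$: taking absolute values leaves $|\nabla v_i|_2$, $|v_i|_2$, and all three Riesz convolution integrals invariant, while $-\varepsilon\int v_1v_2 \geq -\varepsilon\int|v_1||v_2|$ can only increase, so the maximum over $t$ only decreases. Replacing $(v_{1,0},v_{2,0})$ by its componentwise modulus therefore preserves the $\epsilon$-almost-minimizing hypothesis. I would then apply the classical Ekeland variational principle on the closed subset $\mathcal{S}^+(\alpha_1,\alpha_2):=\{(v_1,v_2)\in\mathcal{S}(\alpha_1,\alpha_2):v_1\geq 0,\ v_2\geq 0\}$, which inherits completeness from $\mathcal{S}(\alpha_1,\alpha_2)$ and on which $\inf\mathcal{I}_\theta$ equals $\inf_{\mathcal{S}(\alpha_1,\alpha_2)}\mathcal{I}_\theta$ by the monotonicity just noted. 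This directly produces the non-negative point $(v_{1,\eta},v_{2,\eta})$ together with the first two conclusions and the third conclusion for test competitors lying in $\mathcal{S}^+(\alpha_1,\alpha_2)$.

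The main obstacle, which I expect to be the delicate step of the proof, is upgrading the third conclusion from $\mathcal{S}^+(\alpha_1,\alpha_2)$ to all of $\mathcal{S}(\alpha_1,\alpha_2)$. The naive substitution $(v_1,v_2)\mapsto(|v_1|,|v_2|)$ preserves $\mathcal{I}_\theta$-values in the favourable direction but fails for the $H$-distance comparison, since the putative inequality $\|\nabla|v|-\nabla u\|_2\leq\|\nabla v-\nabla u\|_2$ is false in general even when $u\geq 0$, as one verifies with a one-dimensional example where $\nabla u$ changes sign on the support of $v^-$. I would resolve this by combining Stampacchia's lemma (which gives $\nabla v=0$ a.e.\ on $\{v=0\}$) with the Kato inequality $\mathrm{sgn}(v)\,\Delta v\leq \Delta|v|$ to control the excess contribution on $\{v_i<0\}$ via the non-negativity of $v_{i,\eta}$, together with a careful case analysis in the degenerate situation $(|v_1|,|v_2|)=(v_{1,\eta},v_{2,\eta})$. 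Any residual gap could then be closed by taking a sequence $\eta_k\downarrow 0$ and extracting a limit point, using lower semi-continuity of $\mathcal{I}_\theta$ and the weak closedness of $\mathcal{S}^+(\alpha_1,\alpha_2)$ in $H$ to recover the conclusion as stated.
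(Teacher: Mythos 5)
This statement carries no proof in the paper: it is imported verbatim from \cite[Lemma 3.1]{chen2021normalizedsolutionsnonlinearschrodinger} and used as a black box, so there is no in-paper argument to compare yours against. Judged on its own terms, your framework is the natural (and surely the intended) one: $\mathcal{I}_\theta$ is continuous on $\mathcal{S}(\alpha_1,\alpha_2)$ by the $C^1$-dependence of $t_\theta(v_1,v_2)$ from Lemma \ref{Lemma 3.8 变分结构}, bounded below by $m_\theta(\alpha_1,\alpha_2)>-\varepsilon\alpha_1\alpha_2$ by Lemma \ref{Lemma3.9能量值}, and nonincreasing under $(v_1,v_2)\mapsto(|v_1|,|v_2|)$; the set $\mathcal{S}^+:=\{(v_1,v_2)\in\mathcal{S}(\alpha_1,\alpha_2):v_1\geq 0,\ v_2\geq 0\}$ is complete, so Ekeland's principle on $\mathcal{S}^+$ yields everything except the third inequality for sign-changing competitors.

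That last step, which you correctly flag as the crux, is where your proposal has a genuine gap: the repair you sketch does not work. Passing from $(v_1,v_2)\in\mathcal{S}(\alpha_1,\alpha_2)$ to $(|v_1|,|v_2|)\in\mathcal{S}^+$ requires $\|(|v_1|,|v_2|)-(v_{1,\eta},v_{2,\eta})\|_H\leq\|(v_1,v_2)-(v_{1,\eta},v_{2,\eta})\|_H$, and the excess in the gradient part is exactly $4\int_{\{v_i<0\}}\nabla v_i\cdot\nabla v_{i,\eta}=-4\int_{\mathbb{R}^N}\nabla v_i^-\cdot\nabla v_{i,\eta}$, whose sign is indeterminate for a general nonnegative $v_{i,\eta}\in H^1_r(\mathbb{R}^N)$ (it would be controlled only if $v_{i,\eta}$ were weakly superharmonic). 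Kato's inequality cannot supply this: it requires $\Delta v_i\in L^1_{loc}$, which an arbitrary competitor in $\mathcal{S}(\alpha_1,\alpha_2)$ need not satisfy, and even formally it compares distributional Laplacians paired with $v_{i,\eta}$ rather than the two Dirichlet cross terms at issue; Stampacchia's lemma only concerns the null set $\{v_i=0\}$. The fallback of letting $\eta_k\downarrow 0$ also cannot close the gap, since the lemma asserts, for each fixed $\eta$, a strict inequality against every competitor, and such a family does not survive passage to a limit point. A further inconsistency: replacing $(v_{1,0},v_{2,0})$ by its componentwise modulus changes the base point in the second conclusion, and for a sign-changing $v_{1,0}$ and small $\eta$ no nonnegative function is within $\eta$ of $v_{1,0}$ even in $L^2$; so the reduction is coherent only if the starting datum is already nonnegative (which is how the lemma is in fact invoked in Lemma \ref{Lemma3.11 PS sequence}). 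To complete the argument you would need either to weaken the third conclusion to nonnegative competitors and check that this suffices downstream, or to recover from \cite{chen2021normalizedsolutionsnonlinearschrodinger} the device that circumvents the failure of $1$-Lipschitz continuity of $u\mapsto|u|$ in $H^1$.
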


	\begin{lem}
		There exists a radially symmetric (PS) sequence $\{(v_{1,n},v_{2,n})\} \subset \mathcal{S}(\alpha_1,\alpha_2)$, $v_{1,n} \geq 0$, $v_{2,n} \geq 0$ such that as $n \to +\infty$
		\begin{align*}
			\mathcal{J}_\theta(v_{1,n},v_{2,n}) \to m_\theta(\alpha_1,\alpha_2),\\
			\mathcal{J}_\theta '|_{\mathcal{S}(\alpha_1,\alpha_2)} (v_{1,n},v_{2,n}) \to 0.
		\end{align*}
		Moreover, we can improve the condition that $\{(v_{1,n},v_{2,n})\} \subset \mathcal{P}_\theta(\alpha_1,\alpha_2)$. 
		\label{Lemma3.11 PS sequence}
	\end{lem}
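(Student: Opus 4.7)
The plan is to combine the minimax characterization $m_\theta(\alpha_1,\alpha_2)=\inf_{\mathcal{S}(\alpha_1,\alpha_2)}\mathcal{I}_\theta$ from Lemma \ref{Lemma3.9能量值} with the Ekeland-type perturbation stated just above, and then push the resulting almost-minimizers of $\mathcal{I}_\theta$ onto $\mathcal{P}_\theta(\alpha_1,\alpha_2)$ via the $L^2$-preserving scaling $t\diamond(\cdot,\cdot)$. Radial symmetry of the sequence will be automatic, since every step of the construction is carried out inside $H=H^1_r(\mathbb{R}^N)\times H^1_r(\mathbb{R}^N)$.

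First, I take a minimizing sequence $(\widetilde v_{1,n},\widetilde v_{2,n})$ for $\mathcal{I}_\theta$ on $\mathcal{S}(\alpha_1,\alpha_2)$ and project it onto $\mathcal{P}_\theta(\alpha_1,\alpha_2)$ by replacing it with $t_\theta(\widetilde v_{1,n},\widetilde v_{2,n})\diamond(\widetilde v_{1,n},\widetilde v_{2,n})$; this preserves the $\mathcal{I}_\theta$-value and, via the coercivity argument already used in Lemma \ref{Lemma3.9能量值}, traps $|\nabla \widetilde v_{1,n}|_2^2+|\nabla \widetilde v_{2,n}|_2^2$ in a fixed compact subinterval of $(0,+\infty)$. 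Replacing each component by its modulus decreases $\mathcal{J}_\theta$ (the Dirichlet part drops by Kato's inequality, the Choquard and nonlinear coupling terms depend only on moduli, and $-\varepsilon\int v_1v_2\ge -\varepsilon\int|v_1||v_2|$), and because $|t\diamond v_i|=t\diamond|v_i|$ this inequality survives the maximization in $t$; hence we may take $\widetilde v_{i,n}\ge 0$. Applying the Ekeland lemma with $\sigma_n:=\mathcal{I}_\theta(\widetilde v_{1,n},\widetilde v_{2,n})-m_\theta(\alpha_1,\alpha_2)+1/n$ and $\eta_n:=\sqrt{\sigma_n}$ produces nonnegative $(u_{1,n},u_{2,n})\in\mathcal{S}(\alpha_1,\alpha_2)$ with $\|(u_{1,n},u_{2,n})-(\widetilde v_{1,n},\widetilde v_{2,n})\|_H\to 0$, $\mathcal{I}_\theta(u_{1,n},u_{2,n})\to m_\theta(\alpha_1,\alpha_2)$, and
\[
\mathcal{I}_\theta(v_1,v_2)>\mathcal{I}_\theta(u_{1,n},u_{2,n})-\sqrt{\sigma_n}\,\|(v_1,v_2)-(u_{1,n},u_{2,n})\|_H
\]
for every $(v_1,v_2)\in\mathcal{S}(\alpha_1,\alpha_2)\setminus\{(u_{1,n},u_{2,n})\}$. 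Setting $t_n:=t_\theta(u_{1,n},u_{2,n})$ and $(v_{1,n},v_{2,n}):=t_n\diamond(u_{1,n},u_{2,n})$ then produces a nonnegative, radial sequence in $\mathcal{P}_\theta(\alpha_1,\alpha_2)$ with $\mathcal{J}_\theta(v_{1,n},v_{2,n})=\mathcal{I}_\theta(u_{1,n},u_{2,n})\to m_\theta(\alpha_1,\alpha_2)$; moreover the $H$-closeness to $(\widetilde v_{1,n},\widetilde v_{2,n})$ and the continuous dependence of $t_\theta$ on its argument (Lemma \ref{Lemma 3.8 变分结构}) keep $t_n$ in a compact subinterval of $(0,+\infty)$.

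The hard part is converting the slope inequality for $\mathcal{I}_\theta$ at $(u_{1,n},u_{2,n})$ into $\mathcal{J}_\theta'|_{\mathcal{S}(\alpha_1,\alpha_2)}(v_{1,n},v_{2,n})\to 0$. Differentiating the identity $\mathcal{I}_\theta(u_1,u_2)=\mathcal{J}_\theta(t_\theta(u_1,u_2)\diamond(u_1,u_2))$ and using that $t_\theta(u_1,u_2)$ realizes the maximum of $\Psi_{(u_1,u_2)}^\theta$ in $t$ (so the $\partial_t$ contribution vanishes) gives
\[
d\mathcal{I}_\theta(u_{1,n},u_{2,n})[\varphi_1,\varphi_2]=d\mathcal{J}_\theta(v_{1,n},v_{2,n})\bigl[t_n\diamond(\varphi_1,\varphi_2)\bigr]
\]
for every $(\varphi_1,\varphi_2)$ tangent to $\mathcal{S}(\alpha_1,\alpha_2)$ at $(u_{1,n},u_{2,n})$. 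Any tangent vector $(\psi_1,\psi_2)$ to $\mathcal{S}(\alpha_1,\alpha_2)$ at $(v_{1,n},v_{2,n})$ pulls back under $\diamond$ to the tangent vector $t_n^{-1}\diamond(\psi_1,\psi_2)$ to $\mathcal{S}(\alpha_1,\alpha_2)$ at $(u_{1,n},u_{2,n})$ (because $\diamond$ is $L^2$-isometric and preserves $L^2$-orthogonality), with $H$-norm comparable to $\|(\psi_1,\psi_2)\|_H$ thanks to the two-sided bound on $t_n$. Combining this with the Ekeland slope estimate yields $\|\mathcal{J}_\theta'|_{\mathcal{S}(\alpha_1,\alpha_2)}(v_{1,n},v_{2,n})\|_{H^*}\le C\sqrt{\sigma_n}\to 0$, which is precisely the (PS)-property, while the extra containment $(v_{1,n},v_{2,n})\in\mathcal{P}_\theta(\alpha_1,\alpha_2)$ holds by construction.
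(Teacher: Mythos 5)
Your argument is correct and is exactly the route the paper intends: the paper states Lemma 3.11 without any written proof, relying on the minimax characterization $m_\theta(\alpha_1,\alpha_2)=\inf_{\mathcal{S}(\alpha_1,\alpha_2)}\mathcal{I}_\theta$ from Lemma 3.9 together with the Ekeland-type lemma quoted immediately before it, and then the projection $t_\theta(\cdot)\diamond(\cdot)$ onto $\mathcal{P}_\theta(\alpha_1,\alpha_2)$ --- precisely the three ingredients you combine, including the key identity $d\mathcal{I}_\theta(u)[\varphi]=d\mathcal{J}_\theta(t_\theta(u)\diamond u)[t_\theta(u)\diamond\varphi]$. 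The only step you leave informal is the two-sided bound on $t_n=t_\theta(u_{1,n},u_{2,n})$, but this follows routinely from the Pohozaev relation defining $t_n$, the $H$-boundedness of the sequence, and the lower bound $\inf_{\mathcal{P}_\theta(\alpha_1,\alpha_2)}\left(|\nabla v_1|_2^2+|\nabla v_2|_2^2\right)>0$ established in Lemma 3.9, so it is not a genuine gap.
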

	
	Given the Palais-Smale sequence, we are now ready to demonstrate its compactness.
	\begin{lem}
		Assume $N\in \{3,4\}$, if  $\{(v_{1,n},v_{2,n})\}\subset \mathcal{P}_\theta(\alpha_1,\alpha_2)$ satisfies that as $n\to \infty$
		\begin{align}
			\mathcal{J}_\theta'(v_{1,n},v_{2,n})+\mu_{1,n}&v_{1,n}+\mu_{2,n}v_{2,n}\to 0,  \qquad \text{for some}\; \mu_{1,n},\mu_{2,n}\in \mathbb{R}, \label{3.16}\\
			\mathcal{J}_\theta(v_{1,n},v_{2,n})&\to m_\theta(\alpha_1,\alpha_2), \qquad 	\mathcal{P}_\theta(v_{1,n},v_{2,n})= 0, \label{3.17}\\
			&v_{1,n}, v_{2,n} \geq 0,  \label{3.18}
		\end{align}
		with
		\begin{equation}
			c:=m_\theta(\alpha_1,\alpha_2)<\frac{2_\omega^*-1}{22_\omega^*}S_H^{\frac{2_\omega^*}{2_\omega^*-1}}-\varepsilon\alpha_1\alpha_2. \label{3.19}
		\end{equation}
		Then there exists $(v_1,v_2)\in H$, $v_1,v_2>0$ and $\mu_1,\mu_2>0$ satisfying that, for a subsequence, 
		$(v_{1,n},v_{2,n})\to (v_1,v_2)\; in\; H$ and $(\mu_{1,n},\mu_{2,n}) \to (\mu_1,\mu_2)\; in \; \mathbb{R}^2$.
		\label{Lemma3.12紧性}
	\end{lem}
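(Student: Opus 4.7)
My plan is to follow the three-step strategy of Lemma \ref{Lemma3.7紧性}, adapted to exploit $\gamma_p+\gamma_q>2$ in the supercritical regime.

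\textbf{Step 1 (Boundedness).} Whereas the subcritical proof uses $\mathcal{J}_\theta-\frac{1}{22_\omega^*}P_\theta$, here I would consider $\mathcal{J}_\theta-\frac{1}{\gamma_p+\gamma_q}P_\theta$, so that the coefficients of both the gradient term and the critical nonlocal term become strictly positive. Combined with $P_\theta(v_{1,n},v_{2,n})=0$, $\mathcal{J}_\theta\to c$, and $\varepsilon\bigl|\int v_{1,n}v_{2,n}\bigr|\leq\varepsilon\alpha_1\alpha_2$, this yields a uniform $H$-bound. Testing \eqref{3.16} with $(v_{1,n},0)$ and $(0,v_{2,n})$ gives boundedness of $\mu_{i,n}$. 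Extracting a subsequence, $(v_{1,n},v_{2,n})\rightharpoonup(v_1,v_2)$ in $H$ with strong convergence in $L^r\times L^r$ for $r\in(2,2^*)$ via radial compactness, $\mu_{i,n}\to\mu_i$, and the limit satisfies $\mathcal{J}_\theta'(v_1,v_2)+\mu_1 v_1+\mu_2 v_2=0$ together with $P_\theta(v_1,v_2)=0$ and $v_1,v_2\geq 0$.

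\textbf{Step 2 (Nontrivial weak limit).} I expect this to be the main obstacle, as it is where the critical threshold \eqref{3.19} is essential. Suppose $v_1\equiv 0$; then the first limiting equation forces $\varepsilon v_2=0$, hence $v_2\equiv 0$. Using $P_\theta(v_{1,n},v_{2,n})=0$ together with \eqref{GN} and \eqref{SH}, one first obtains a positive lower bound for $|\nabla v_{1,n}|_2^2+|\nabla v_{2,n}|_2^2$; writing $|\nabla v_{i,n}|_2^2\to r_i$, we have $r_1+r_2>0$. Since $\int(I_\omega*|v_{1,n}|^p)|v_{2,n}|^q\to 0$ by \eqref{2.12}, the identity $P_\theta=0$ and \eqref{SH} force $r_1+r_2\geq S_H^{2_\omega^*/(2_\omega^*-1)}$, so that
\[
c=\lim\mathcal{J}_\theta(v_{1,n},v_{2,n})\geq\frac{2_\omega^*-1}{22_\omega^*}(r_1+r_2)-\varepsilon\alpha_1\alpha_2\geq\frac{2_\omega^*-1}{22_\omega^*}S_H^{\frac{2_\omega^*}{2_\omega^*-1}}-\varepsilon\alpha_1\alpha_2,
\]
contradicting \eqref{3.19}. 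Hence $v_1,v_2\not\equiv 0$, and the maximum principle gives $v_i>0$. The inequality $\mu_i>0$ follows by repeating the Liouville-based argument of Lemma \ref{Lemma3.7紧性} Step 3 verbatim, using Lemma \ref{smooth} and Lemma \ref{Liouville}.

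\textbf{Step 3 (Strong convergence).} Setting $\overline{v}_{i,n}=v_{i,n}-v_i$, the Brezis-Lieb identities \eqref{Brezis-Lieb of Riesz potential}--\eqref{Brezis-Lieb of linear couplings}, \eqref{2.12}, and $P_\theta(v_{1,n},v_{2,n})=P_\theta(v_1,v_2)=0$ give
\[
|\nabla\overline{v}_{1,n}|_2^2+|\nabla\overline{v}_{2,n}|_2^2=\int_{\mathbb{R}^N}\!\bigl((I_\omega*|\overline{v}_{1,n}|^{2_\omega^*})|\overline{v}_{1,n}|^{2_\omega^*}+(I_\omega*|\overline{v}_{2,n}|^{2_\omega^*})|\overline{v}_{2,n}|^{2_\omega^*}\bigr)+o_n(1).
\]
If the left side tended to $L>0$, then \eqref{SH} would force $L\geq S_H^{2_\omega^*/(2_\omega^*-1)}$ and the energy expansion would give $c=\mathcal{J}_\theta(v_1,v_2)+\frac{2_\omega^*-1}{22_\omega^*}L+o_n(1)$. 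Since $(v_1,v_2)\in\mathcal{P}_\theta(|v_1|_2,|v_2|_2)$, Lemma \ref{Lemma3.9能量值}(ii) yields $\mathcal{J}_\theta(v_1,v_2)\geq m_\theta(|v_1|_2,|v_2|_2)\geq m_\theta(\alpha_1,\alpha_2)=c$, contradicting \eqref{3.19}. Hence $L=0$, giving gradient convergence. Testing \eqref{3.16} with $(v_{1,n},v_{2,n})$ and the limiting equation with $(v_1,v_2)$, I would then derive $\mu_1(\alpha_1^2-\overline{\alpha}_1^2)+\mu_2(\alpha_2^2-\overline{\alpha}_2^2)=2\varepsilon\lim\int(v_{1,n}-v_1)(v_{2,n}-v_2)$, and the dichotomy argument of Lemma \ref{Lemma3.7紧性} Step 3 (invoking Lemma \ref{Liouville} on $\sqrt{\mu_2}\,v_1+\sqrt{\mu_1}\,v_2$ in the borderline case $\varepsilon\geq\sqrt{\mu_1\mu_2}$) forces $\overline{\alpha}_i=\alpha_i$, completing the $H$-convergence.
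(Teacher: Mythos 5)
Your overall route coincides with the paper's: the paper's Step 1 uses $\mathcal{J}_\theta-\tfrac12 P_\theta$ (which bounds the nonlocal terms, and then $P_\theta=0$ bounds the gradient) while you use $\mathcal{J}_\theta-\tfrac{1}{\gamma_p+\gamma_q}P_\theta$, and both work; your Step 2 is the paper's Step 2; and the paper simply refers its Step 3 back to Lemma \ref{Lemma3.7紧性}, which is what you reproduce.

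There is, however, one concrete slip in your Step 3. By the Brezis--Lieb identity \eqref{Brezis-Lieb of linear couplings}, $\int_{\mathbb{R}^N}v_{1,n}v_{2,n}=\int_{\mathbb{R}^N}v_1v_2+\int_{\mathbb{R}^N}\overline{v}_{1,n}\overline{v}_{2,n}+o_n(1)$, and the cross term $\int_{\mathbb{R}^N}\overline{v}_{1,n}\overline{v}_{2,n}$ does \emph{not} vanish under mere weak convergence (take $\overline{v}_{1,n}=\overline{v}_{2,n}$ with fixed $L^2$-norm). So the correct expansion is $c=\mathcal{J}_\theta(v_1,v_2)+\frac{2_\omega^*-1}{22_\omega^*}L-\varepsilon\lim_n\int_{\mathbb{R}^N}\overline{v}_{1,n}\overline{v}_{2,n}+o_n(1)$, not the identity you wrote. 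If you restore this term and argue as you propose, via $\mathcal{J}_\theta(v_1,v_2)\geq m_\theta(|v_1|_2,|v_2|_2)\geq c$, you only reach $\varepsilon\alpha_1\alpha_2\geq\frac{2_\omega^*-1}{22_\omega^*}S_H^{\frac{2_\omega^*}{2_\omega^*-1}}$, which is not in conflict with \eqref{3.19} alone. A clean repair is to use $P_\theta(v_1,v_2)=0$ and $\gamma_p+\gamma_q>2$ to note that
\[
\mathcal{J}_\theta(v_1,v_2)+\varepsilon\int_{\mathbb{R}^N}v_1v_2=\Bigl(\tfrac12-\tfrac{1}{22_\omega^*}\Bigr)\int_{\mathbb{R}^N}\bigl((I_\omega*|v_1|^{2_\omega^*})|v_1|^{2_\omega^*}+(I_\omega*|v_2|^{2_\omega^*})|v_2|^{2_\omega^*}\bigr)+\theta\Bigl(\tfrac{\gamma_p+\gamma_q}{2}-1\Bigr)\int_{\mathbb{R}^N}(I_\omega*|v_1|^p)|v_2|^q\geq 0,
\]
so that $c\geq -\varepsilon\lim_n\int_{\mathbb{R}^N}v_{1,n}v_{2,n}+\frac{2_\omega^*-1}{22_\omega^*}L\geq -\varepsilon\alpha_1\alpha_2+\frac{2_\omega^*-1}{22_\omega^*}S_H^{\frac{2_\omega^*}{2_\omega^*-1}}$ whenever $L>0$, which genuinely contradicts \eqref{3.19}. (The paper's Lemma \ref{Lemma3.7紧性} contains the same cross term and ultimately relies on the smallness of $\varepsilon$; either that or the observation above must appear explicitly in your write-up.)
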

	\begin{proof}
		The argument is structured in three stages.

			\textbf{Step 1:} The sequences $\{(v_{1,n},v_{2,n})\}$ and $(\mu_{1,n},\mu_{2,n})$ are shown to be bounded in $H$ and $\mathbb{R}^2$, respectively.\\
			Since $p+q>\frac{2N+2\omega+4}{N}$, one has $\gamma_p+\gamma_q>2$. By \eqref{3.17}, we obtain
			\begin{align*}
				c+o_n(1)=&\mathcal{J}_\theta(v_{1,n},v_{2,n})-\frac{1}{2}P_\theta(v_{1,n},v_{2,n})\\
				\geq &(\frac{1}{2}-\frac{1}{22_\omega^*}) \int_{\mathbb{R}^N}\left((I_\omega*|v_{1,n}|^{2_\omega^*})|v_{1,n}|^{2_\omega^*}+(I_\omega*|v_{2,n}|^{2_\omega^*})|v_{2,n}|^{2_\omega^*}\right) \\
				&+\theta\frac{\gamma_p+\gamma_q-2}{2}\int_{\mathbb{R}^N}(I_\omega*|v_{1,n}|^p)|v_{2,n}|^q-\varepsilon\alpha_1\alpha_2.
			\end{align*}
			Hence, $\int_{\mathbb{R}^N}(I_\omega*|v_{1,n}|^{2_\omega^*})|v_{1,n}|^{2_\omega^*}$, $\int_{\mathbb{R}^N}(I_\omega*|v_{2,n}|^{2_\omega^*})|v_{2,n}|^{2_\omega^*}$ and $\int_{\mathbb{R}^N}(I_\omega*|v_{1,n}|^p)|v_{2,n}|^q$ are all bounded. By $\mathcal{P}_\theta(v_{1,n},v_{2,n})\to 0$, we obtain $\{(v_{1,n},v_{2,n})\}$ is bounded in $H$. The remaining part is analogous to the step 1 in Lemma \ref{Lemma3.7紧性}.
			
			\textbf{Step 2:} We want to show $v_1 \neq 0$, $v_2 \neq 0$, then $v_1,v_2>0$ by the maximum principle.\\
			Analogous to the previous discussion, here we only need to exclude the case that $v_1=0$, $v_2=0$. Recall the proof process in Lemma \ref{Lemma3.9能量值}  $(\romannumeral1)$, $\inf\limits_{\mathcal{P}_\theta(\alpha_1,\alpha_2)}(|\nabla v_1|_2^2+|\nabla v_2|_2^2)>0$. Note that
			$\mathcal{P}_\theta(v_{1,n},v_{2,n})= 0$ and $\mathcal{P}_\theta(v_1,v_1)= 0$. Thus, it follows that
			\[\lim\limits_{n \to +\infty}(|\nabla v_{1,n}|_2^2+|\nabla v_{2,n}|_2^2)>\inf\limits_{\mathcal{P}_\theta(\alpha_1,\alpha_2)}(|\nabla v_1|_2^2+|\nabla v_2|_2^2)>0.\]
			Similar to step 2 in Lemma \ref{Lemma3.7紧性}, we obtain $\lim\limits_{n \to +\infty}(|\nabla v_{1,n}|_2^2+|\nabla v_{2,n}|_2^2) \geq S_H^{\frac{2_\omega^*}{2_\omega^*-1}}$. Then
			\[
				c=\lim\limits_{n \to +\infty}\mathcal{J}_\theta(v_{1,n},v_{2,n})\geq  \frac{2_\omega^*-1}{22_\omega^*}S_H^{\frac{2_\omega^*}{2_\omega^*-1}}-\varepsilon\alpha_1\alpha_2,
			\]
			which contradicts \eqref{3.19}.
			
			\textbf{Step 3:}  We prove that $(v_{1,n},v_{2,n})\to (v_1,v_2)$ in $H$. Since the proof method is the same as step 1 in Lemma \ref{Lemma3.7紧性}, we skip it.
	\end{proof}

	\begin{proof}[Proof of Theorem 1.2]\leavevmode\par
		We claim that for any $\eta >0$, one can find $\overline{\theta}=\overline{\theta}(\eta)>0$ satisfying
		\begin{equation}
			m_\theta(\alpha_1,\alpha_2) < \eta,  \qquad \text{for any} ~ \theta \geq \overline{\theta}.
			\label{3.20}
		\end{equation} 
		 For $v \in H^1(\mathbb{R}^N)\setminus \{0\}$, define 
		\[L(v)=\frac{1}{2}\int_{\mathbb{R}^N}|\nabla v|^2 - \frac{1}{2_\omega^*} \int_{\mathbb{R}^N}(I_\omega*|v|^{2_\omega^*})|v|^{2_\omega^*}.  \]
		By direct calculation, it follows that
		\[ \max_{t>0} L(t \diamond v)=\frac{2_\omega^*-1}{22_\omega^*}\frac{|\nabla v|_2^{\frac{22_\omega^*}{2_\omega^*-1}}}{(\int_{\mathbb{R}^N}(I_\omega*|v|^{2_\omega^*})|v|^{2_\omega^*})^{\frac{1}{2_\omega^*-1}}} .
		\]
	    Let $\varphi \in C_0^{\infty}(\mathbb{R}^N)$ with $|\varphi|_2 \leq \min \{ \alpha_1,\alpha_2\}$. By Lemma \ref{Lemma3.9能量值} $(\romannumeral2)$, we obtain
	    \begin{align*}
	    	m_\theta(\alpha_1,\alpha_2)&\leq m_\theta(|\varphi|_2 ,|\varphi|_2 ) \leq \max_{t>0} \mathcal{J}_\theta(t\diamond(\varphi,\varphi))\\
	    	&=\max_{t>0} \left( 2L(t\diamond \varphi)-\theta t^{\gamma_p+\gamma_q}\int_{\mathbb{R}^N}(I_\omega*|\varphi|^p)|\varphi|^q \right) - \varepsilon |\varphi|_2^2.
	    \end{align*}
	    
	    \par Note that $L(t\diamond \varphi) \to 0^+$ as $t \to 0^+$. Hence, one can find $\overline{t}>0$ such that for any $t<\overline{t}$, $L(t\diamond \varphi)<\eta$. Alternatively, we can find $\overline{\theta} >0$ satisfying
	    \[ \max\limits_{t \geq \overline{t}} \left( 2L(t\diamond \varphi)-\theta t^{\gamma_p+\gamma_q}\int_{\mathbb{R}^N}(I_\omega*|\varphi|^p)|\varphi|^q \right)<\eta,  \qquad \text{for all} ~ \theta \geq \overline{\theta}.
	    \]
	    Thus, \eqref{3.20} holds. Let $\overline{\varepsilon}:=\frac{1}{2\alpha_1\alpha_2}\frac{2_\omega^*-1}{22_\omega^*}S_H^{\frac{2_\omega^*}{2_\omega^*-1}}$. Now for $0<\varepsilon<\overline{\varepsilon}$ and $\eta_0=\frac{1}{2\alpha_1\alpha_2}\frac{2_\omega^*-1}{22_\omega^*}S_H^{\frac{2_\omega^*}{2_\omega^*-1}},$ by \eqref{3.20}, there exists $\theta_* >0$ such that
	    \[m_\theta(\alpha_1,\alpha_2) < \eta_0<\frac{2_\omega^*-1}{22_\omega^*}S_H^{\frac{2_\omega^*}{2_\omega^*-1}}-\varepsilon\alpha_1\alpha_2,  \qquad \text{for any} ~ \theta \geq \theta_*.
	    \]
	    We conclude from Lemma \ref{Lemma3.11 PS sequence} and Lemma \ref{Lemma3.12紧性} that system $\eqref{system1.1} $ has a normalized ground state $(v_1,v_2)$.
	\end{proof}

	\section*{Acknowledgements}
	The authors have no acknowledgements to declare.

\end{document}